\theoremstyle{definition}
\newtheorem{theorem}{Theorem}[section]
\newtheorem*{theorem*}{Statement}
\newtheorem{lemma}[theorem]{Lemma}
\newtheorem{proposition}[theorem]{Proposition}
\newtheorem{definition}{Definition}[section]
\newtheorem*{condition*}{Condition}
\newtheorem*{assumption*}{Assumption}
\newtheorem{example}{Example}[section]
\newtheorem*{example*}{Example}
\DeclareMathOperator{\dom}{dom}
\DeclareMathOperator{\zer}{zer}
\DeclareMathOperator{\aver}{av}
\DeclareMathOperator{\graph}{gr}
\DeclareMathOperator{\prox}{prox}
\DeclareMathOperator{\support}{supp}
\DeclareMathOperator{\co}{co}
\newcommand{\1}{\mathbbm 1}
\newcommand{\sx}{{\mathsf x}}
\newcommand{\sy}{{\mathsf y}}
\newcommand{\sz}{{\mathsf z}}
\newcommand{\sH}{{\mathsf H}}
\newcommand{\bP}{{{\mathbb P}}} 
\newcommand{\bE}{{{\mathbb E}}} 
\newcommand{\bN}{{{\mathbb N}}} 
\newcommand{\mU}{{\mathcal U}}
\newcommand{\mZ}{{\mathcal Z}}
\newcommand{\sA}{{\mathsf A}}
\newcommand{\sX}{{\mathsf X}}
\newcommand{\sG}{{\mathsf G}}
\newcommand{\Selec}{{\mathfrak S}} 
\newcommand{\mm}{{\mathfrak m}} 
\newcommand{\mcB}{{\mathscr B}}
\newcommand{\mcI}{{\mathscr I}} 
\newcommand{\mcF}{{\mathscr F}} 
\newcommand{\mcG}{{\mathscr G}}
\newcommand{\cP}{{{\mathcal P}}} 
\newcommand{\cS}{{{\mathcal S}}} 
\newcommand{\cG}{{{\mathcal G}}} 
\newcommand{\cM}{{{\mathcal M}}} 
\newcommand{\cE}{{{\mathcal E}}} 
\newcommand{\cL}{{{\mathcal L}}}
\newcommand{\cT}{{{\mathcal T}}}
\newcommand{\cI}{{{\mathcal I}}} 
\newcommand{\cC}{{{\mathcal C}}} 
\newcommand{\RN}{{{\mathbb R}^N}} 
\newcommand{\bR}{{{\mathbb R}}}
\newcommand{\ps}[1]{\langle #1 \rangle}
\newcommand{\bs}{\boldsymbol}
\newcommand{\eqdef}{:=} 
\newcommand{\rev}[1]{#1}
\begin{document}

\title{\rev{Constant Step Stochastic Approximations \\
Involving Differential Inclusions: Stability, \\
Long-Run Convergence and Applications}}  

\author{Pascal Bianchi$^{(1)}$, Walid Hachem$^{(2)}$, and Adil Salim$^{(1)}$ 
\\  
{$^{(1)}$ LTCI, T\'el\'ecom ParisTech, Universit\'e Paris-Saclay, 
  75013, Paris, France} \\ 
{$^{(2)}$ CNRS / LIGM (UMR 8049), Universit\'e Paris-Est
Marne-la-Vall\'ee} \\
\texttt{pascal.bianchi,adil.salim@telecom-paristech.fr,walid.hachem@u-pem.fr}}

\maketitle

\begin{abstract}
\rev{We consider a Markov chain $(x_n)$ whose kernel is indexed by a scaling
parameter $\gamma>0$, refered to as the step size.  The aim is to analyze the
behavior of the Markov chain in the doubly asymptotic regime where $n\to\infty$
then $\gamma\to 0$. First, under mild assumptions on the so-called drift of the
Markov chain, we show that the interpolated process converges narrowly to the
solutions of a Differential Inclusion (DI) involving an upper semicontinuous
set-valued map with closed and convex values.  Second, we provide verifiable
conditions which ensure the stability of the iterates.  Third, by putting the
above results together, we establish the long run convergence of the iterates
as $\gamma\to 0$, to the Birkhoff center of the DI.  The ergodic behavior of
the iterates is also provided.  Application examples are investigated.  We
apply our findings to 1) the problem of
nonconvex proximal stochastic optimization and 2) a fluid model of parallel queues.  
}

  \bigskip

{\bf Keywords: Differential inclusions; Dynamical systems;
Stochastic approximation with constant step; \rev{Non-convex optimization; Queueing systems.}} \\ 
34A60,47N10, 54C60,62L20.

\end{abstract}

\section{Introduction} 
\label{intro}

\rev{In this paper, we consider a Markov chain $(x_n,n\in \bN)$ with values in $E = \bR^N$, where $N\geq 1$ is an integer.
We assume that the probability transition kernel $P_\gamma$ is indexed by a scaling factor $\gamma$, which 
belongs to some interval $(0,\gamma_0)$.
The aim of the paper is to analyze the long term behavior of the Markov chain in the regime where $\gamma$ is small.
The map
\begin{equation}
  \label{eq:drift}
  g_\gamma(x) \eqdef \int \frac{y-x}\gamma P_\gamma(x,dy)\,,
\end{equation}
assumed well defined for all $x\in \bR^N$, is called the \emph{drift} or the \emph{mean field}. The Markov chain admits the representation
\begin{equation}
x_{n+1} = x_n + \gamma\,g_\gamma(x_n) +  \gamma\,U_{n+1}\,,\label{eq:decomp-markov-drift}
\end{equation}
where $U_{n+1}$ is a martingale increment noise \emph{i.e.,} the conditional expectation of $U_{n+1}$ given the past samples 
is equal to zero. A case of interest in the paper is given by iterative models of the form:
\begin{equation}
x_{n+1} = x_n+\gamma \,h_\gamma (\xi_{n+1},x_n)\,,\label{eq:iterative-model}
\end{equation}
where $(\xi_n, n \in \bN^*)$ is a sequence of independent and identically distributed (iid)
random variables defined on a probability space $\Xi$ with probability law 
$\mu$, and $\{ h_\gamma \}_{\gamma\in(0,\gamma_0)}$ is a family of maps
on $\Xi\times \bR^N\to\RN$. In this case, 
the drift $g_\gamma$ has the form:
\begin{equation}
g_\gamma(x) = \int h_\gamma(s,x)\,\mu(ds)\,.\label{eq:drift-integral}
\end{equation}
Our results are as follows.

\begin{enumerate}
\item {\bf Dynamical behavior.} Assume that the drift $g_\gamma$ has the form~(\ref{eq:drift-integral}).
Assume that for $\mu$-almost all $s$ and for every sequence 
$((\gamma_k,z_k)\in (0,\gamma_0)\times \RN, k\in\bN)$ converging to $(0,z)$,
$$
h_{\gamma_k}(s,z_k) \to H(s,z)
$$
where $H(s,z)$ is a subset of $\RN$ (the Euclidean distance between $h_{\gamma_k}(s,z_k)$ and the set $H(s,z)$
tends to zero as $k\to\infty$).
Denote by $\sx_\gamma(t)$ the continuous-time stochastic process obtained by a
piecewise linear interpolation of the sequence $x_n$, where
the points $x_n$ are spaced by a fixed time step $\gamma$ on the positive real axis.
As $\gamma\to 0$, and assuming that $H(s,\cdot)$ is a proper and upper semicontinuous (usc) map with closed convex values, 
we prove that $\sx_\gamma$ converges
narrowly  (in the topology of
uniform convergence on compact sets) to the set of solutions of the differential inclusion (DI)
\begin{equation}
\dot \sx(t) \in  \int H(s, \sx(t))\mu(ds)\,,\label{eq:di-intro}
\end{equation}
where for every $x\in \RN$, $\int  H(s, x)\mu(ds)$ is the \emph{selection integral} of $H(\,.\,x)$, which is defined as
the closure of the set of integrals of the form
$\int \varphi d\mu$ where  $\varphi$ is any integrable function
such that $\varphi(s) \in H(s, x)$ for $\mu$-almost all $s$.
\smallskip

\item {\bf Tightness.} As the iterates are not \emph{a priori} supposed to be in a compact subset of $\RN$, 
we investigate the issue of stability. We posit a verifiable \emph{Pakes-Has'minskii} condition on 
the Markov chain $(x_n)$. The condition ensures that the iterates are stable in the sense that the random occupation measures
$$
\Lambda_n \eqdef \frac 1{n+1}\sum_{k=0}^n\delta_{x_k} \qquad (n\in \bN)
$$
(where $\delta_a$ stands for the Dirac measure at point $a$), form a tight family of random variables
on the Polish space of probability measures equipped with the L\'evy-Prokhorov distance. 
The same criterion allows to establish
the existence of invariant measures of the kernels $P_\gamma$, and the tightness of the 
family of all invariant measures, for all $\gamma\in (0,\gamma_0)$.
As a consequence of Prokhorov's theorem, these invariant measures admit
cluster points as $\gamma\to 0$. Under a Feller assumption on the kernel $P_\gamma$,
we prove that every such cluster point is an invariant measure for 
the DI (\ref{eq:di-intro}). Here, since the
flow generated by the DI is in general
set-valued, the notion of invariant measure is borrowed from \cite{fau-rot-13}.
\smallskip

\item {\bf Long-run convergence.}
Using the above results, we investigate the behavior of the iterates in the 
asymptotic regime where $n\to\infty$ and, next, $\gamma\to 0$.
Denoting by $d(a,B)$ the distance between a point $a\in E$ and a subset $B\subset E$,
we prove that for all $\varepsilon>0$, 
\begin{equation}
\lim_{\gamma\to 0}\limsup_{n\to\infty} \frac 1{n+1}\sum_{k=0}^{n}\text{Prob}\left(d(x_k,\text{BC})>\varepsilon\right) = 0\,,\label{eq:longrun}
\end{equation}
where $\text{BC}$ is the Birkhoff center of the flow induced by the DI (\ref{eq:di-intro}),
and $\text{Prob}$ stands for the probability.
We also characterize the ergodic behavior of these iterates. Setting $\overline x_n = \frac 1{n+1}\sum_{k=0}^{n}x_k$, 
we prove that 
\begin{equation}
\lim_{\gamma\to 0}\limsup_{n\to\infty} \text{Prob}\left(d(\overline x_n,\co(L_{\aver}))>\varepsilon\right)=0\,,\label{eq:longrun-ergodic}
\end{equation}
where $\co(L_{\aver})$ is the convex hull of
the limit set of the averaged flow associated with (\ref{eq:di-intro}) (see Section~\ref{subsec-inv-mes}).
\smallskip

\item {\bf Applications.} We investigate several application scenarios. We consider the problem
of non-convex stochastic optimization, and analyze the convergence of a constant step size proximal stochastic gradient algorithm.
The latter finds application in the optimization of deep neural networks \cite{lecun1998gradient}.
We show that the interpolated process converges narrowly to a DI, which we characterize. 
We also provide sufficient conditions allowing to characterize the long-run behavior of the algorithm.
Second, we explain that our results apply to the characterization of the fluid limit
of a system of parallel queues. The model is introduced in \cite{ayesta2013scheduling,gas-gau-12}.
Whereas the narrow convergence of the interpolated process was studied in \cite{gas-gau-12}, less is known about the stability
and the long-run convergence of the iterates. We show how our results can be used to address this problem.
As a final example, we explain how our results can be used in the context of monotone operator theory, 
in order to analyze a stochastic version of the celebrated proximal point algorithm.
The algorithm consists in replacing the usual monotone operator by an iid sequence of random monotone operators. 
The algorithm has been studied in \cite{bia-16,bia-hac-16} in the context of decreasing step size. 
Our analysis provide the tools to characterize its behavior in a constant step regime.
\end{enumerate}

\paragraph*{Paper organization.} In Section~\ref{sec:examples}, we introduce
the application examples. In Section~\ref{sec:literature}, we briefly 
discuss the literature.}
Section~\ref{sec-bgrd} is devoted to the
mathematical background and to the notations. The main results are given in
Section~\ref{sec-main}. The tightness of the interpolated process as well as
its narrow convergence towards the solution set of the DI
(Th.~\ref{th:SA=wAPT}) are proven in Section~\ref{sec-prf-narrow}.  Turning to
the Markov chain characterization, Prop.~\ref{prop:cluster}, who explores the
relations between the cluster points of the Markov chains invariant measures
and the invariant measures of the flow induced by the DI, is proven in
Section~\ref{sec-prf-cluster}.  A general result describing the asymptotic
behavior of a functional of the iterates with a prescribed growth is provided
by Th.~\ref{the:CV}, and proven in Section~\ref{sec-prf-CV}.  Finally, in
Section~\ref{sec-prf-asymptotics}, we show how the results pertaining to the
ergodic convergence and to the convergence of the iterates (Th.~\ref{cvg-CVSI}
and~\ref{cvg-XY} respectively) can be deduced from Th.~\ref{the:CV}.  
\rev{Finally, Section~\ref{sec:applis} is devoted to the application examples. 
We prove that our hypotheses are satisfied.
}

\rev{\section{Examples}
\label{sec:examples}

\begin{example} \label{ex:optim}
{\sl Non-convex stochastic optimization.}
  Consider the problem
\begin{equation}
\text{minimize } \bE_\xi(\ell(\xi,x)) + r(x)\text{ w.r.t }x\in \bR^N\,,\label{eq:pb-nonCVX}
\end{equation}
where $\ell(\xi,\,.\,)$ is a (possibly non-convex) differentiable function on $\bR^N\to \bR$ indexed by a random variable (r.v.) $\xi$,
$\bE_\xi$ represents the expectation w.r.t. $\xi$, and $r:\bR^N\to \bR$ is a convex function.
The problem typically arises in deep neural networks \cite{yoon2017combined,scardapane2017group}. In the latter case, $x$ represents the collection of weights of
the network, $\xi$ represents a random training example of the database, and $\ell(\xi,x)$ is a risk function which quantifies
the inadequacy between the sample response and the network response. Here, $r(x)$ is a regularization term which prevents
the occurence of undesired solutions. A typical regularizer used in machine learning is the $\ell_1$-norm $\|x\|_1$ that promotes sparsity or generalizations like $\|D x\|_1$, where $D$ is a matrix, that promote structured sparsity.
A popular algorithm used to find an approximate solution to Problem~(\ref{eq:pb-nonCVX}) is the proximal stochastic gradient
algorithm, which reads
\begin{equation}
  x_{n+1} = \prox_{\gamma r}(x_n - \gamma \nabla \ell(\xi_{n+1},x_n))\,,\label{eq:prox-gradient}
\end{equation}
where $(\xi_n,n\in \bN^*)$ are i.i.d. copies of the r.v. $\xi$, 
where $\nabla$ represents the gradient w.r.t. parameter $x$, 
and where the proximity operator of $r$ is the mapping on $\bR^N\to\bR^N$ defined by
$$
\prox_{\gamma r} : x\mapsto \arg\min_{y\in \bR^N} \left(\gamma\, r(y) + \frac{\|y-x\|^2}2\right)\,.
$$
The drift $g_\gamma$ has the form~\eqref{eq:drift-integral} where 
$h_\gamma(\xi,x) =  \gamma^{-1}(\prox_{\gamma r}(x-\gamma \nabla \ell(\xi,x)) - x)$
and $\mu$ represents the distribution of the r.v. $\xi$.
Under adequate hypotheses, we prove that the interpolated process converges narrowly to the solutions to the DI
$$
\dot \sx(t) \in  -\nabla_x \bE_\xi(\ell(\xi,\sx(t))) - \partial r(\sx(t))\,,
$$
where $\partial r$ represents the subdifferential of a function $r$, defined by
$$
\partial r(x) \eqdef \left\{u\in \RN\,:\, \forall y\in \RN,\, r(y)\geq r(x)+\ps{u,y-x}\right\}
$$
at every point $x\in \RN$ such that $r(x)<+\infty$, and $\partial r(x)=\emptyset$ elsewhere.
We provide a sufficient condition under which the iterates~(\ref{eq:prox-gradient}) satisfy the 
Pakes-Has'minskii criterion, which in turn, allows to characterize the long-run behavior of the iterates.
\end{example}

\begin{example} \label{ex:fluid}  
{\sl Fluid limit of a system of parallel queues with priority.}
We consider a time slotted queuing system composed of $N$ queues. The 
following model is inspired from \cite{ayesta2013scheduling,gas-gau-12}.  We
denote by $y^{k}_n$ the number of users in the queue $k$ at time $n$.  We
assume that a random number of $A^{k}_{n+1} \in \bN$ users arrive in the queue
$k$ at time $n+1$. The queues are prioritized: the users of Queue $k$ can only
be served if all users of Queues $\ell$ for $\ell < k$ have been served.
Whenever the queue $k$ is non-empty and the queues $\ell$ are empty for all
$\ell<k$, one user leaves Queue $k$ with probability $\eta_k > 0$. Starting 
with $y^k_0 \in \bN$, we thus have 
\[ 
y^{k}_{n+1} = y^{k}_n + A^{k}_{n+1} -
B^{k}_{n+1}\1_{\{y^{k}_n>0,\,y_{n}^{k-1} =\cdots = y_{n}^1 = 0\}}\,, 
\]
where $B^{k}_{n+1}$ is a Bernoulli r.v.~with parameter $\eta_k$, and where
$\1_S$ denotes the indicator of an event $S$, equal to one on that set and to
zero otherwise. We assume that the process
$((A^{1}_{n},\dots,A^{N}_n,B^{1}_n,\dots,B^{N}_n), {n\in\bN^*})$ is iid, and 
that the random variables $A^{k}_{n}$ have finite second moments. We denote by
$\lambda_k\eqdef \bE(A^{k}_{n}) > 0$ the arrival rate in Queue $k$. 
Given a scaling parameter $\gamma > 0$ which is assumed to be small, we are 
interested in the \emph{fluid-scaled process}, defined as 
$x^{k}_n = \gamma y^{k}_n$. This process is subject to the dynamics:
\begin{align}
  \label{eq:queue}
  x^{k}_{n+1} &= x^{k}_n + \gamma\,A^{k}_{n+1} - \gamma\,B^{k}_{n+1}
  \1_{\{x^{k}_n>0,\,x^{k-1}_{n} =\cdots = x^{1}_{n} = 0\}}\,.
\end{align}
The Markov chain $x_n = (x^{1}_{n},\dots,x^{N}_n)$ admits the representation 
\eqref{eq:decomp-markov-drift}, where the drift $g_\gamma$ is defined on 
$\gamma\bN^N$, and is such that its $k$-th component $g_\gamma^{k}(x)$ is 
\begin{equation}
\label{gk-queue} 
g_\gamma^{k}(x) = 
  \lambda_k-\eta_k\1_{\{x^{k}>0,\,x^{k-1} =\cdots = x^{1} = 0\}} \,,
\end{equation} 
for every $k\in \{1,\dots,N\}$ and every $x=(x^1,\dots,x^N)$ in $\gamma\bN^N$.
Introduce the vector 
$\bs u_k\eqdef (\lambda_1,\cdots,\lambda_{k-1},\lambda_k-\eta_k,\lambda_{k+1},
\dots,\lambda_N)$ for all $k$. Let $\bR_+\eqdef[0,+\infty)$, and define the 
set-valued map on $\bR_+^N$ 
\begin{equation}
\label{Hqueue} 
\sH(x) \eqdef \left\{
  \begin{array}[h]{ll}
    \bs u_1 & \text{ if }x^{(1)}>0 \\
    \co (\bs u_1,\dots,\bs u_k) & \text{ if }x^1 =\cdots = x^{k-1} = 0 \ 
  \text{and} \ x^{k}>0\, , 
  \end{array}
\right.
\end{equation} 
where $\co$ is the convex hull. Clearly, $g_\gamma(x)\in \sH(x)$ for every 
$x\in\gamma\bN^N$. In~\cite[\S~3.2]{gas-gau-12}, it is shown that the DI 
$\dot \sx(t) \in \sH(\sx(t))$ has a unique solution. Our results imply the 
narrow convergence of the interpolated process to this solution, hence 
recovering a result of \cite{gas-gau-12}. More importantly, if the following 
stability condition 
\begin{equation}
  \label{eq:stability-queue}
  \sum_{k=1}^N \frac{\lambda_k}{\eta_k} < 1  
\end{equation}
holds, our approach allows to establish the tightness of the occupation measure
of the iterates $x_n$, and to characterize the long-run behavior of these 
iterates. We prove that in the long-run, the sequence $(x_n)$ converges to zero 
in the sense of~\eqref{eq:longrun}. The ergodic convergence in the sense
of~\eqref{eq:longrun-ergodic} can be also established with a small extra 
effort. 
\end{example}

\begin{example}
  {\sl Random monotone operators.}
As a second application, we consider the problem of finding a zero of a maximal monotone operator
$\sA:\RN\to 2^\RN$:
\begin{equation}
  \label{eq:find-zero}
\text{Find }x\text{ s.t. }0\in A(x)\,.
\end{equation}
We recall that a set-valued map $\sA:\RN\to 2^\RN$ is said monotone if for every $x$, $y$ in $\RN$,
and every $u\in \sA(x)$, $v\in \sA(y)$, $\ps{u-v,x-y}\geq 0$. 
The domain and the graph of 
$\sA$ are the respective subsets of $\RN$ and $\RN\times \RN$ defined as
$\dom(\sA) \eqdef \{ x \in \RN \, : \, \sA(x) \neq \emptyset \}$, 
and $\graph(\sA) \eqdef \{ (x,y) \in \RN\times \RN \, : \, y \in \sA(x) \}$.  
We denote by $\zer(\sA) \eqdef \{x\in \RN\,:\, 0\in \sA(x)\}$ the set of zeroes of $\sA$.
The operator $\sA$ is proper if $\dom(\sA)\neq\emptyset$.
A proper monotone operator
$\sA$ is said maximal if its graph $\graph(\sA)$ is a maximal element
in the inclusion ordering.
Denote by $I$ the identity operator, and by $\sA^{-1}$ the inverse of the
operator $\sA$, defined by the fact that $(x,y) \in \graph(\sA^{-1})
\Leftrightarrow (y,x) \in \graph(\sA)$.  It is well know that $\sA$ is maximal monotone if
and only if, for all $\gamma > 0$, the \emph{resolvent}
$\eqdef ( I + \gamma \sA )^{-1}$ is a contraction defined on the whole space
(in particular, it is single valued). 

Problem~\eqref{eq:find-zero} arises
in several applications such as convex optimization, variational inequalities, 
or game theory.
The celebrated \emph{proximal point algorithm} \cite{rockafellar1976monotone} 
generates the sequence $(u_n, n\in \bN)$ defined recursively as 
$u_{n+1} = (I+\gamma \sA)^{-1}(u_n)$. The latter sequence converges to a zero of the operator $\sA$, whenever such a zero exists.
Recent works (see \cite{bia-hac-16} and references therein) have been devoted to the special case 
where the operator $\sA$ is defined as the following selection integral
$$
\sA(x) = \int A(s,x)\mu(ds)\,,
$$
where $\mu$ is a probability on $\Xi$ and where $\{A(s, \cdot), s \in \Xi\}$ 
is a family of maximal monotone operators.
In this context, a natural algorithm for solving~(\ref{eq:find-zero}) is 
\begin{equation}
  \label{eq:ppa}
  x_{n+1} = (I+\gamma A(\xi_{n+1},\,.\,))^{-1}(x_n)
\end{equation}
where $(\xi_n, n\in \bN^*)$ is an iid sequence of r.v. whose law coincides 
with $\mu$.
The asymptotic behavior of~\eqref{eq:ppa} is analyzed in \cite{bia-16} under the assumption
that the step size $\gamma$ is decreasing with $n$. On the other hand, the results of the present paper
apply to the case where $\gamma$ is a constant which does not depend on $n$.
Here, the drift $g_\gamma$ has the form \eqref{eq:drift-integral} where the map
$-h_\gamma(s,x) = \gamma^{-1}( x-(I+\gamma A(s, \cdot))^{-1}(x))$ is the so-called \emph{Yosida regularization}
of the operator $A(s,\cdot)$ at $x$. As $\gamma\to 0$, it is well known that 
for every $x\in\dom(A(s,\cdot))$, $-h_\gamma(s,x)$ converges to the element of 
least norm in $A(s,\cdot)$ \cite{bau-com-livre11}.
Thanks to our results, it can be shown that under some hypotheses, the interpolated process converges narrowly to the unique solution 
to the DI
\begin{equation}
  \dot \sx(t) \in  -\int A(s, \sx(t))\mu(ds)\,,\label{eq:di-mm}
\end{equation}
and, under the Pakes-Has'minskii condition, that the iterates $x_n$ converge in the long run to the zeroes of $\sA$.
\end{example}

}

\rev{
\section{About the Literature}
\label{sec:literature}

When the drift $g_\gamma$ does not depend on $\gamma$ and is supposed to be a Lispchitz continuous map,
the long term behavior of the iterates $x_n$ in the small step size regime has been studied in the treatises
\cite{ben-met-pri-livre90,ben-(cours)99,kus-yin-(livre)03,bor-livre08,ben-hir-aap99} among
others. In particular, narrow convergence of the interpolated process to the solution of an 
Ordinary Differential Equation (ODE) is established. The authors of \cite{for-pag-99}
introduce a Pakes-Has'minskii criterion to study the long-run behavior of the iterates.

The recent interest in the stochastic approximation when the ODE is replaced
with a differential inclusion dates back to \cite{ben-hof-sor-05}, where
decreasing steps were considered. A similar setting is considered
in~\cite{fau-rot-10}. A Markov noise was considered in the recent manuscript
\cite{yaj-bha-(arxiv)16}.  We also mention \cite{fau-rot-13}, where the ergodic
convergence is studied when the so called weak asymptotic pseudo trajectory
property is satisfied. 
The case where the DI is built from maximal monotone operators is studied in
\cite{bia-16} and \cite{bia-hac-16}. 

Differential inclusions arise in many applications, which include
game theory (see \cite{ben-hof-sor-05,ben-hof-sor-(partII)06}, 
\cite{rot-san-siam13} and the references therein), convex optimization \cite{bia-hac-16},
queuing theory or wireless communications, where stochastic approximation algorithms 
with non continuous drifts are frequently used, and can be modelled by
differential inclusions~\cite{gas-gau-12}. 

Differential inclusions with a constant step were studied in \cite{rot-san-siam13}. 
The paper \cite{rot-san-siam13} extends previous results of \cite{ben-sch-00}
to the case of a DI. The key result established in \cite{rot-san-siam13} is that 
the cluster points of the collection of invariant measures of the Markov chain
are invariant for the flow associated with the DI. 
Prop. \ref{prop:cluster} of the present paper restates this result in a more general setting
and using a shorter proof, which we believe to have its own interest.
Moreover, the so-called GASP model studied by  \cite{rot-san-siam13}
does not cover certain applications, such as the ones provided in Section~\ref{sec:examples}, for instance.
In addition, \cite{rot-san-siam13} focusses on the case where the space is compact, which
circumvents the issue of stability and simplifies the mathematical arguments. 
However, in many situations, the compactness assumption does not hold, and sufficient conditions
for stability need to be formulated.
Finally, we characterize the asymptotic behavior of the iterates $(x_n)$
(as well as their Cesar\`o means) in the doubly asymptotic regime where $n\to\infty$ then $\gamma\to 0$.
Such results are not present in \cite{rot-san-siam13}.
}


\section{Background}
\label{sec-bgrd} 

\subsection{General Notations}

The notation $C(E,F)$ is used to denote the set of continuous functions from
the topological space $E$ to the topological space $F$.  The notation $C_b(E)$
stands for the set of bounded functions in $C(E,\bR)$.  We use the conventions
$\sup \emptyset = -\infty$ and $\inf \emptyset = +\infty$. Notation 
$\lfloor x\rfloor$ stands for the integer part of $x$.

Let $(E,d)$ be a metric space. For every $x\in E$ and $S\subset E$, we define
$d(x,S)=\inf\{d(x,y):y\in S\}$. We say that a sequence $(x_n, n\in\bN)$ on $E$
converges to $S$, noted $x_n\to_n S$ or simply $x_n\to S$, if $d(x_n,S)$ tends
to zero as $n$ tends to infinity.  For $\varepsilon > 0$, we define the
$\varepsilon$-neighborhood of the set $S$ as $S_\varepsilon \eqdef \{x\in
E:d(x,S)<\varepsilon\}$. The closure of $S$ is denoted by $\overline S$, and
its complementary set by $S^c$.
The characteristic function of $S$ is the function $\1_S:E\to\{0,1\}$ equal to
one on $S$ and to zero elsewhere.

Let $E={\mathbb R}^N$ for some integer $N\geq 1$.  
We endow the space $C(\bR_+,E)$ with the topology of uniform convergence on 
compact sets. 
The space $C(\bR_+,E)$ is metrizable by the distance $d$ defined for every $\mathsf x,\mathsf y\in C(\bR_+,E)$ by
\begin{equation}
d(\mathsf x,\mathsf y)\eqdef\sum_{n\in\mathbb N}2^{-n}
\left(1\wedge\!\! 
\sup_{t\in [0,n]}\|\mathsf x(t)-\mathsf y(t)\|\right)\,,
\label{eq:d}
\end{equation}
where $\|\cdot\|$ denotes the Euclidean norm in $E$.

\subsection{Random Probability Measures}

Let $E$ denote a metric space and let $\mcB(E)$ be its Borel $\sigma$-field.
We denote by $\cM(E)$ the set of probability measures on $(E,\mcB(E))$.
The support $\support(\nu)$ of a measure $\nu\in \cM(E)$ 
is the smallest closed set $G$ such that $\nu(G) = 1$.
We endow $\cM(E)$ with the topology of narrow convergence: 
a sequence $(\nu_n, n\in \bN)$ on $\cM(E)$ converges to
a measure $\nu\in\cM(E)$ (denoted $\nu_n\Rightarrow \nu$) if for every 
$f\in C_b(E)$, $\nu_n(f)\to\nu(f)$, where $\nu(f)$ is a shorthand for 
$\int f(x) \nu(dx)$. 
If $E$ is a Polish space, $\cM(E)$ is metrizable by the L\'evy-Prokhorov distance,
and is a Polish space as well.
A subset $\cG$ of $\cM(E)$ is said tight if for every $\varepsilon>0$, there exists a compact
subset $K$ of $E$ such that for all $\nu\in \cG$, $\nu(K)>1-\varepsilon$. 
By Prokhorov's theorem, $\cG$ is tight if and only if it is relatively compact in $\cM(E)$.

We denote by $\delta_a$ the Dirac measure at the point $a\in E$.
If $X$ is a random variable on some measurable space $(\Omega,\mcF)$ into $(E,\mcB(E))$,
we denote by $\delta_X:\Omega\to\cM(E)$ the measurable mapping defined by $\delta_X(\omega)=\delta_{X(\omega)}$.
If $\Lambda:(\Omega,\mcF)\to (\cM(E),\mcB(\cM(E)))$ is a random variable on the set of probability measures,
we denote by $\bE\Lambda$ the probability measure defined by 
$
(\bE\Lambda)(f)\eqdef\bE(\Lambda(f))\,,
$
for every $f\in C_b(E)$. 

\subsection{Set-Valued Mappings and Differential Inclusions}

A set-valued mapping $\sH : E\rightrightarrows F$ is a function on $E$ into the
set $2^F$ of subsets of $F$. The graph of $\sH$ is 
$\graph(\sH)\eqdef \{(a,b)\in E\times F:y\in \sH(a)\}$.  The domain of $\sH$ is
$\dom(\sH) \eqdef \{ a \in E \, : \, \sH(a) \neq \emptyset \}$. The mapping
$\sH$ is said proper if $\dom(\sH)$ is non-empty.  We say that $\sH$ is
single-valued if $\sH(a)$ is a singleton for every $a\in E$ (in which case we
handle $\sH$ simply as a function $\sH:E\to F$).

Let $\sH: E\rightrightarrows  E$ be a set-valued map on $E=\bR^N$, where $N$ is a positive integer.
Consider the 
differential inclusion: 
\begin{equation}
\dot \sx(t)\in \sH(\sx(t))\,.\label{eq:di}
\end{equation}
We say that an absolutely continuous mapping $\sx:\bR_+\to E$ 
is a solution to the differential inclusion 
with initial condition $a\in E$ 
if $\sx(0)=a$ and if (\ref{eq:di}) holds for almost every $t\in \bR_+$.
We denote by 
$$
\Phi_{\sH}:E\rightrightarrows C(\bR_+, E)
$$ 
the set-valued mapping such that for every $a\in E$, $\Phi_{\sH}(a)$ 
is set of solutions to~(\ref{eq:di}) with initial condition $a$.
We refer to $\Phi_\sH$ as the evolution system induced by $\sH$.
For every subset $A\subset E$, we define $\Phi_\sH(A) = \bigcup_{a\in A}\Phi_\sH(a)$.

A mapping $\sH:E\rightrightarrows E$ is said \emph{upper
  semi continuous} (usc) at a point $a_0 \in E$ if for every open set $U$
containing $\sH(a_0)$, there exists $\eta>0$, such that for every
$a\in E$, $\|a-a_0\|<\eta$ implies $\sH(a)\subset U$.
It is said usc if it is usc at every point
\cite[Chap.~1.4]{aub-cel-(livre)84}.
In the particular case where $\sH$ is usc with nonempty compact convex values
and satisfies the condition
\begin{equation}
  \label{eq:lin-growth}
\exists c>0,\ \forall a\in E,\  \sup\{\|b\|\,:b\in \sH(a)\}\leq c(1+\|a\|)\ ,
\end{equation}
then, $\dom(\Phi_\sH)=E$, see \emph{e.g.} \cite{aub-cel-(livre)84},
and moreover, $\Phi_\sH(E)$ is closed in the metric space $(C(\bR_+, E), d)$. 

\subsection{Invariant Measures of Set-Valued Evolution Systems}
\label{subsec-inv-mes}

Let $(E,d)$ be a metric space.
We define the shift operator $\Theta:C(\bR_+,E)\to
C(\bR_+,C(\bR_+,E))$ s.t. for every $\mathsf x\in C(\bR_+,E)$,
$\Theta(\mathsf x) : t\mapsto \mathsf x(t+\,\cdot\,)$.

Consider a set-valued mapping $\Phi:E\rightrightarrows C(\bR_+,E)$.
When $\Phi$ is single-valued (\emph{i.e.}, for all $a\in E$, $\Phi(a)$
is a continuous function), a measure $\pi\in \cM(E)$ is called an
\emph{invariant measure} for $\Phi$, or $\Phi$-invariant, if for all
$t>0$, $\pi=\pi\Phi_t^{-1}$, where $\Phi_t:E\to E$ is the map defined
by $\Phi_t(a) = \Phi(a)(t)$.  For all $t\geq 0$, we define the
projection $p_t:C(\bR_+,E)\to E$ by $p_t(\sx) = \sx(t)$.

The definition can be extended as follows to the case where $\Phi$ is set-valued.
\begin{definition}
\label{def-inv} 
A probability measure $\pi\in\cM(E)$ is said invariant for $\Phi$
if there exists $\upsilon\in\cM(C(\bR_+,E))$ s.t.
\begin{enumerate}[(i)]
\item\label{inv-support} $\support(\upsilon)\subset \overline{\Phi(E)}$\,;
\item\label{inv-Theta} $\upsilon$ is $\Theta$-invariant\,;
\item\label{inv-margin} $\upsilon p_0^{-1}=\pi$.
\end{enumerate}
\end{definition} 
When $\Phi$ is single valued, both definitions coincide.
The above definition is borrowed from \cite{fau-rot-13} (see also 
\cite{mil-aki-99}). 
Note that $\overline{\Phi(E)}$ can be replaced by $\Phi(E)$ whenever the latter
set is closed (sufficient conditions for this have been provided above).

The limit set of a function $\sx \in C(\bR_+, E)$ is defined as 
\[
L_\sx \eqdef \bigcap_{t\geq 0}\overline{\sx([t,+\infty)} \,.
\]
It coincides with the set of points of the form $\lim_n \sx(t_n)$ for some 
sequence $t_n\to\infty$. Consider now a set valued mapping 
$\Phi:E\rightrightarrows C(\bR_+,E)$. The limit set $L_{\Phi(a)}$ of a point 
$a \in E$ for $\Phi$ is 
\[
L_{\Phi(a)} \eqdef \bigcup_{\sx \in \Phi(a)} L_\sx \, ,
\]
and $L_\Phi \eqdef \bigcup_{a\in E}L_{\Phi(a)}$.
A point $a$ is said recurrent for $\Phi$ if $a \in L_{\Phi(a)}$.
The Birkhoff center of $\Phi$ is the closure 
of the set of recurrent points
\[
\text{BC}_{\Phi} \eqdef 
\overline{\{ a \in E \, : \, a \in L_{\Phi(a)} \}}\, . 
\]
The following result, established in \cite{fau-rot-13} 
(see also \cite{aub-fra-las-91}), is a consequence of the celebrated 
recurrence theorem of Poincar\'e. 
\begin{proposition} 
\label{poincare} 
Let $\Phi:E\rightrightarrows C(\bR_+,E)$. Assume that $\Phi(E)$ is closed. Let $\pi\in \cM(E)$ be an invariant measure for $\Phi$. Then,
$\pi(\text{BC}_\Phi) = 1$. 
\end{proposition}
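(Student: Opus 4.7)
The strategy is to lift the problem from $E$ to the path space $C(\bR_+,E)$, apply Poincar\'e recurrence to the shift, and then project back down by $p_0$.

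\medskip

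First, I would unpack the invariance hypothesis: by Definition~\ref{def-inv} there exists $\upsilon\in\cM(C(\bR_+,E))$ that is $\Theta$-invariant (meaning invariant under the one-parameter semigroup $\Theta_t:\sx\mapsto \sx(t+\,\cdot\,)$), whose pushforward by $p_0$ is $\pi$, and which is supported on $\overline{\Phi(E)}=\Phi(E)$ by the closedness assumption. Since $E=\bR^N$ is Polish, $C(\bR_+,E)$ equipped with the metric $d$ in~\eqref{eq:d} is also Polish, and each shift $\Theta_t$ is continuous (hence measurable) on it.

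Next, I would apply the Poincar\'e recurrence theorem to the single measure-preserving transformation $\Theta_1$ on the Polish probability space $(C(\bR_+,E),\upsilon)$. This yields a $\upsilon$-conull set $R\subset C(\bR_+,E)$ such that every $\sx\in R$ is recurrent for $\Theta_1$: there exist integers $n_k(\sx)\to\infty$ with $\Theta_{n_k}(\sx)\to \sx$ in $C(\bR_+,E)$. Evaluating the convergence in the metric $d$ at $t=0$ gives $\sx(n_k)\to \sx(0)$, so $\sx(0)\in L_\sx$.

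Now I would use the support property. For $\sx\in R\cap \Phi(E)$ (still a $\upsilon$-conull set), we have $\sx\in\Phi(a)$ for some $a\in E$; but by the very definition of $\Phi(a)$ as the solutions with initial condition $a$, necessarily $a=\sx(0)$, hence $\sx\in\Phi(\sx(0))$ and therefore $L_\sx\subset L_{\Phi(\sx(0))}$. Combining with the previous step, $\sx(0)$ is a recurrent point of $\Phi$ for $\upsilon$-a.e. $\sx$. Pushing forward by $p_0$ yields
\[
\pi\bigl(\{a\in E:a\in L_{\Phi(a)}\}\bigr)\geq \upsilon\bigl(\{\sx:\sx(0)\in L_{\Phi(\sx(0))}\}\bigr)=1,
\]
and the conclusion $\pi(\mathrm{BC}_\Phi)=1$ follows by taking the closure.

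\medskip

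The main obstacle is the justification of the recurrence step in the path-space setting: one needs $C(\bR_+,E)$ to be a Polish space so that Poincar\'e recurrence applies in its standard form, and one must observe that recurrence in the metric $d$ automatically entails convergence of evaluations at every fixed time, in particular at $0$. The other subtlety is the identification $\sx\in\Phi(\sx(0))$ for paths in the support, which is where the closedness of $\Phi(E)$ (allowing us to replace $\overline{\Phi(E)}$ by $\Phi(E)$ in~(\ref{inv-support})) is essential.
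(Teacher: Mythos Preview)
The paper does not actually prove Proposition~\ref{poincare}: it is stated as a known result, attributed to \cite{fau-rot-13} (see also \cite{aub-fra-las-91}), and described only as ``a consequence of the celebrated recurrence theorem of Poincar\'e.'' Your argument---lifting to the path space, applying topological Poincar\'e recurrence to the discrete shift $\Theta_1$ on the Polish space $(C(\bR_+,E),d)$, and projecting by $p_0$---is correct and is precisely the standard route taken in the cited references.

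One small remark: you invoke ``the very definition of $\Phi(a)$ as the solutions with initial condition $a$'' to deduce $\sx\in\Phi(\sx(0))$ for $\sx\in\Phi(E)$. Strictly speaking, the proposition as written allows an arbitrary set-valued map $\Phi:E\rightrightarrows C(\bR_+,E)$, so this implication is an additional structural hypothesis rather than a definition. It is, however, satisfied in every application the paper makes (where $\Phi=\Phi_\sH$ and solutions start at their initial condition), and it is built into the axioms of a set-valued dynamical system in \cite{fau-rot-13}. So this is a matter of the paper's slightly informal phrasing of the proposition, not a gap in your reasoning; you were right to flag it as the place where the closedness of $\Phi(E)$ is used.
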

We denote by $\cI(\Phi)$ the subset of $\cM(E)$ formed by all invariant
measures for $\Phi$. We define
$$
\mcI(\Phi) \eqdef \{ \mathfrak{m}\in\cM(\cM(E))\,:\,\forall A\in \mcB(\cM(E)),\, \cI(\Phi)\subset A\,\Rightarrow\,\mathfrak{m}(A)=1\}\,.
$$
We define the mapping $\aver:C(\bR_+,E)\to C(\bR_+,E)$ by 
$$
\aver(\sx) :t\mapsto \frac 1t \int_0^t \sx(s) \, ds \,,
$$
and $\aver(\sx)(0)=\sx(0)$. Finally, we define 
$\aver(\Phi) : E \rightrightarrows C(\bR_+, E)$ by
$\aver(\Phi)(a) = \{ \aver(\sx) \, : \, \sx \in \Phi(a) \}$ for
each $a\in E$.

\subsection{The Selection Integral}

 Let $(\Xi, \mcG,\mu)$ denote an arbitrary probability space. 
For $1 \leq p < \infty$, we denote by
${\mathcal L}^p(\Xi, {\mcG}, \mu;  E)$ the Banach space of the measurable
functions $\varphi : \Xi \to  E$ such that $\int \| \varphi \|^p d\mu <
\infty$. 
For any set-valued mapping $G:\Xi\rightrightarrows  E$, we define the set
\[
\Selec^p_G \eqdef 
\{ \varphi \in {\mathcal L}^p(\Xi, {\mcG}, \mu;  E) \, : \, 
\varphi(\xi) \in G(\xi) \ \mu-\text{a.e.} \} \, .
\] 
Any element of $\Selec^1_G$ is referred to as an \emph{integrable selection}.
If $\Selec^1_G \neq \emptyset$, the mapping $G$ is said to be 
integrable.  The \emph{selection integral} \cite{molchanov2006theory} of $G$ is the set
\[
\int G d\mu \eqdef \overline{\left\{ \int_\Xi \varphi d\mu \ : \ 
  \varphi \in \Selec^1_G \right\}} \,.
\]

\section{Main Results}
\label{sec-main}

\subsection{Dynamical Behavior}
\label{rm-general} 

From now on to the end of this paper, we set $E\eqdef\bR^N$ where $N$
is a positive integer. \rev{Choose $\gamma_0>0$. For every
  $\gamma\in (0,\gamma_0)$, we introduce a probability transition
  kernel $P_\gamma$ on $E\times\mcB(E)\to [0,1]$.}
Let $(\Xi, \mcG,\mu)$ be an arbitrary probability space.  

\begin{assumption*}[RM] There exist a $\mcG\otimes\mcB(E)/\mcB(E)$-measurable map 
$h_\gamma: \Xi\times E \to E$ 
and $H: \Xi\times E\rightrightarrows E$ such that:
\begin{enumerate}[i)]
\item \label{hyp:drift} \rev{For every $x\in E$, $$\int \frac{y-x}\gamma P_\gamma(x,dy) = \int h_\gamma(s,x)\mu(ds)\,.$$}
\item\label{hyp:RM-cvg}  
For every $s$ $\mu$-a.e. and for every converging sequence
$(u_n,\gamma_n)\to (u^\star,0)$ on $E\times (0,\gamma_0)$,
$$h_{\gamma_n}(s,u_n)\to H(s,u^\star)\,.$$
\item For all $s$ $\mu$-a.e., $H(s,\cdot)$ is proper, usc, 
with closed convex values.
\item\label{hyp:RM-integ}
For every $x\in E$, $H(\cdot, x)$ is $\mu$-integrable. We set 
$\sH(x)\eqdef  \int H(s, x)\, \mu(ds)$. 
\item \label{hyp:flot-borne} For every $T>0$ and every compact set 
$K\subset E$, 
$$
\sup\{\|\sx(t)\|: t\in [0,T], \sx\in \Phi_{\sH}(a), a\in K\}<\infty\,.
$$ 
\item \label{hyp:RM-moments} \rev{For every compact set $K\subset E$, there exists $\epsilon_{K}>0$ 
such that
    \begin{align}
&\sup_{x\in K}\sup_{0<\gamma<\gamma_0}
 \int\left\|\frac{y-x}\gamma\right\|^{1+\epsilon_K} P_\gamma(x,dy) <\infty\,,
\label{eq:moment}\\
&\sup_{x\in K}\sup_{0<\gamma<\gamma_0} \int \left\|h_\gamma(s,x)\right\|^{1+\epsilon_K}\mu(ds)<\infty\,.
\label{eq:moment-bis}
\end{align}}
\end{enumerate}
\end{assumption*}
\rev{Assumption \ref{hyp:drift} implies that the drift has the form~(\ref{eq:drift}). 
As mentioned in the introduction, this is for instance useful in the case of iterative Markov models
such as (\ref{eq:iterative-model}).}
Assumption \ref{hyp:flot-borne} requires implicitly that the set of solutions
$\Phi_{\sH}(a)$ is non-empty for any value of $a$. It holds true if, 
\emph{e.g.}, the linear growth condition~\eqref{eq:lin-growth} on $\sH$ is
satisfied. 

On the canonical space $\Omega\eqdef E^{\mathbb N}$ equipped with the
$\sigma$-algebra $\mcF\eqdef \mcB(E)^{\otimes \bN}$,
we denote by $X:\Omega\to E^{\bN}$ the canonical process
defined by $X_n(\omega)=\omega_n$ for every $\omega=(\omega_k,k\in \bN)$ and every $n\in \bN$, where 
$X_n(\omega)$ is the $n$-th coordinate of $X(\omega)$.
For every $\nu\in \cM(E)$ and $\gamma\in (0,\gamma_0)$, 
we denote by $\bP^{\nu,\gamma}$ the unique probability measure on $(\Omega,\mcF)$
such that $X$ is an homogeneous Markov chain with initial distribution $\nu$ and transition kernel $P_\gamma$.
We denote by $\bE^{\nu,\gamma}$ the corresponding expectation. When $\nu=\delta_a$ for some $a\in E$, 
we shall prefer the notation $\bP^{a,\gamma}$ to $\bP^{\delta_a,\gamma}$.

The set $C(\bR_+,E)$ is equipped with the topology of uniform convergence 
on the compact intervals, who is known to be compatible with the distance 
$d$ defined by~\eqref{eq:d}. For every $\gamma>0$,
we introduce the measurable map on  
$(\Omega,\mcF)\to (C(\bR_+,E),\mcB(C(\bR_+,E)))$, such that for every 
$x=(x_n,n\in \bN)$ in $\Omega$,
$$
  \sX_\gamma(x)\,:t \mapsto x_{\lfloor \frac t\gamma\rfloor} + (t/\gamma-\lfloor t/\gamma\rfloor)(x_{\lfloor \frac t\gamma\rfloor+1}-x_{\lfloor \frac t\gamma\rfloor})
\,.
$$
The random variable $\sX_\gamma$ will be referred to as the linearly 
\emph{interpolated process}.
On the space $(C(\bR_+,E),\mcB(C(\bR_+,E)))$, the distribution of the 
r.v.~$\sX_\gamma$ is $\bP^{\nu,\gamma}\sX_\gamma^{-1}$.
\begin{theorem}
\label{th:SA=wAPT} 
Suppose that Assumption (RM) is satisfied. Then, for every compact set 
$K\subset E$, the family 
$\{ \bP^{a,\gamma}\sX_\gamma^{-1}:a\in K,0<\gamma<\gamma_0 \}$ is tight. 
Moreover, for every $\varepsilon>0$, 
\[
\sup_{a\in K}\,\bP^{a,\gamma}\left(d(\sX_\gamma,\Phi_\sH(K))>\varepsilon\right)\xrightarrow[\gamma\to 0]{}0\,.
\]
\end{theorem}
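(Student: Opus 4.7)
The plan is to fix a compact $K\subset E$ and horizon $T>0$, and to prove both statements for $\sX_\gamma$ restricted to $C([0,T],E)$; the metric~\eqref{eq:d} then lets us conclude on $C(\bR_+,E)$ by diagonal extraction in $T$. Throughout I rely on the decomposition $x_{n+1}=x_n+\gamma g_\gamma(x_n)+\gamma U_{n+1}$, where by \ref{hyp:drift} the sequence $(U_{n+1})$ is a $\bP^{a,\gamma}$-martingale increment for the canonical filtration $\mcF_n=\sigma(X_0,\dots,X_n)$.

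By \ref{hyp:flot-borne}, there is a compact $\tilde K\supset K$ containing $\sx([0,T])$ for every $\sx\in\Phi_\sH(K)$; pick $M>0$ with $\tilde K\subset B(0,M-1)$, set $\tau_M\eqdef\inf\{n:\|x_n\|>M\}$, and write $\epsilon_M$ for the constant supplied by~\ref{hyp:RM-moments} on $\overline{B(0,M)}$. Jensen applied to~\eqref{eq:moment} gives $\sup_{\|x\|\leq M,\,\gamma}\|g_\gamma(x)\|\leq C_M<\infty$, while Burkholder's inequality for martingales of order $p\in(1,2]$ combined with Doob's maximal inequality yields
$$
\bE^{a,\gamma}\Bigl[\sup_{n\leq T/\gamma}\Bigl\|\gamma\sum_{k<n\wedge\tau_M}U_{k+1}\Bigr\|^{1+\epsilon_M}\Bigr]=O(\gamma^{\epsilon_M}),
$$
uniformly in $a\in K$. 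Tightness of the stopped interpolated process $\sX_\gamma^{\tau_M}$ on $C([0,T],E)$ then follows from the equicontinuity modulus $\omega_T(\sX_\gamma^{\tau_M},\delta)\leq C_M\delta+2\sup_n\|\gamma\sum_{k<n\wedge\tau_M}U_{k+1}\|$, uniformly over $a\in K$.

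Given any weakly convergent subsequence $\bP^{a_k,\gamma_k}(\sX_{\gamma_k}^{\tau_M})^{-1}\Rightarrow\Pi$ with $a_k\to a^\star\in K$ and $\gamma_k\to 0$, pass to a Skorokhod realization $\sX_{\gamma_k}^{\tau_M}\to\sx^\infty$ almost surely in $(C(\bR_+,E),d)$. The martingale contribution vanishes uniformly on $[0,T]$ by the above, and the drift processes $v^k(s)\eqdef g_{\gamma_k}(x_{\lfloor s/\gamma_k\rfloor})\1_{\{s/\gamma_k<\tau_M\}}$ are bounded in $L^{1+\epsilon_M}([0,T],E)$ by~\eqref{eq:moment-bis}, hence weakly relatively compact; along a further subsequence $v^k\rightharpoonup v$ and $\sx^\infty(t)=a^\star+\int_0^t v(s)\,ds$ for $t$ before the (possibly infinite) exit time. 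The crux is to show $v(s)\in\sH(\sx^\infty(s))$ almost everywhere: for $\mu$-a.e.~$\xi$, \ref{hyp:RM-cvg} gives $h_{\gamma_k}(\xi,x_{\lfloor s/\gamma_k\rfloor})\to H(\xi,\sx^\infty(s))$ in the Euclidean set-distance sense, and the uniform integrability supplied by~\eqref{eq:moment-bis} transfers this to the $\mu$-integral via a Fatou-type argument for selection integrals; combining with Mazur's theorem on the weak $L^{1+\epsilon_M}$ limit and the closed convex values of $\sH$ yields $v(s)\in\sH(\sx^\infty(s))$ a.e., so that $\sx^\infty$ coincides on $[0,T]$ with a DI solution starting at $a^\star\in K$, which stays in $\tilde K\subset B(0,M-1)$.

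To conclude, the identification forces $\sx^\infty([0,T])\subset\tilde K$, so weak convergence implies $\sup_{a\in K}\bP^{a,\gamma}(\tau_M\leq T/\gamma)\to 0$ (otherwise a limit point would touch $\partial B(0,M)$). Hence $\sX_\gamma$ coincides with $\sX_\gamma^{\tau_M}$ on $[0,T]$ with high probability, yielding the tightness of $\sX_\gamma$ itself and the fact that every cluster law is supported on $\Phi_\sH(K)$. Now \ref{hyp:flot-borne} combined with the usc, closed convex-valued structure of $\sH$ makes $\Phi_\sH(K)$ equibounded and equicontinuous on every $[0,T]$, and closed in $(C(\bR_+,E),d)$ by the standard closure lemma for usc convex-valued DIs; if the uniform convergence claim failed, the portmanteau lemma applied to the closed set $\{\sx:d(\sx,\Phi_\sH(K))\geq\varepsilon\}$ would contradict the support of every cluster law on $\Phi_\sH(K)$. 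I expect the main difficulty to be the identification $v(s)\in\sH(\sx^\infty(s))$: combining the set-valued pointwise limit in~\ref{hyp:RM-cvg}, integration against $\mu$, and the passage from pointwise-in-$s$ information to a weak $L^{1+\epsilon_M}$ limit in $s$ requires careful handling of uniform integrability and of the usc convex-valued structure, without any Lipschitz bound on the drift to lean on.
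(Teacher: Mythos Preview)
Your overall architecture matches the paper's: truncate via a stopping time at a level chosen using Assumption~(RM--\ref{hyp:flot-borne}), control the martingale part, obtain tightness of the stopped interpolated process, pass to Skorokhod realizations, identify the limit as a DI solution through weak compactness of the drift, and then remove the truncation by showing the exit probability vanishes. The paper uses a truncation/Doob argument for the martingale where you use Burkholder; both work.

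The meaningful difference is in the identification step. The paper keeps the $\xi$--variable explicit: it considers $v_n(\xi,t)=h_{\gamma_n,R}(\xi,\sx_n(\gamma_n\lfloor t/\gamma_n\rfloor))$ as an element of $L^{1+\epsilon}(\Xi\times[0,T])$, extracts a weak limit $v$, and applies an Aubin--Cellina convergence lemma (Lemma~\ref{lem:cvth}) \emph{in the product space} to obtain $v(\xi,t)\in H_R(\xi,\sz(t))$ for $\mu\otimes\lambda_T$--a.e.\ $(\xi,t)$; integrating in $\xi$ at the very end yields membership in the selection integral $\sH_R$. Your route integrates over $\xi$ first, working with $g_{\gamma_k}$, and therefore needs the intermediate ``Fatou--type'' claim that $d(g_{\gamma_k}(x_{\lfloor s/\gamma_k\rfloor}),\sH(\sx^\infty(s)))\to 0$ pointwise in $s$. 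This is correct---project $h_{\gamma_k}(\xi,\cdot)$ onto the closed convex set $H(\xi,\sx^\infty(s))$ via a measurable selection, and use the uniform integrability in~\eqref{eq:moment-bis} together with Vitali---but it costs you a measurable--selection argument that the paper's product--space approach sidesteps entirely. The subsequent Mazur step is then standard.

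Two minor gaps remain. First, you invoke ``the usc, closed convex--valued structure of $\sH$'' and the closedness of $\Phi_\sH(K)$, but Assumption~(RM) only guarantees that each $H(\xi,\cdot)$ is usc, not $\sH$ itself; fortunately your argument does not actually need either claim, since the Skorokhod limit lands directly in $\Phi_\sH(a^\star)\subset\Phi_\sH(K)$ and the set $\{\sx:d(\sx,\Phi_\sH(K))\geq\varepsilon\}$ is closed regardless. Second, your Burkholder estimate only yields tightness as $\gamma\to 0$, whereas the theorem claims tightness over the whole family $\gamma\in(0,\gamma_0)$; the paper closes this by a separate elementary argument when $\liminf\gamma_n>0$, using that the interpolated process then has boundedly many linear pieces on $[0,T]$.
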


\subsection{Convergence Analysis}

For each $\gamma\in (0,\gamma_0)$, we denote by
$$
\cI(P_\gamma) \eqdef \{\pi\in\cM(E)\,:\,\pi = \pi P_\gamma\}
$$
the set of invariant probability measures of $P_\gamma$.  Letting 
$\cP = \{P_\gamma,0<\gamma<\gamma_0\}$, we define 
$\cI(\cP) = \bigcup_{\gamma\in (0,\gamma_0)}\cI(P_\gamma)$.  
We say that a measure $\nu\in \cM(E)$ is a cluster point of $\cI(\cP)$
as $\gamma\to 0$, if there exists a sequence $\gamma_j\to 0$ and a sequence
of measures $(\pi_j,j\in \bN)$ s.t. $\pi_j\in \cI(P_{\gamma_j})$ for all $j$,
and  $\pi_j\Rightarrow \nu$.

We define
$$
\mcI(P_\gamma) \eqdef \{\mathfrak{m}\in\cM(\cM(E))\,:\, \support(\mathfrak{m})\subset \cI(P_\gamma)\}\,,
$$
and $\mcI(\cP) =\bigcup_{\gamma\in (0,\gamma_0)}\mcI(P_\gamma)$. 
We say that a measure $\mm\in \cM(\cM(E))$ is a cluster point of $\mcI(\cP)$
as $\gamma\to 0$, if there exists a sequence $\gamma_j\to 0$ and a sequence
of measures $(\mm_j,j\in \bN)$ s.t. $\mm_j\in \mcI(P_{\gamma_j})$ for all $j$,
and  $\mm_j\Rightarrow \mm$.

\begin{proposition}
\label{prop:cluster}
  Suppose that Assumption (RM) is satisfied. Then, 
  \begin{enumerate}[i)]
  \item\label{cluster-Idroit}
   As $\gamma\to 0$, any cluster point of $\cI(\cP)$ is an element of $\cI(\Phi_\sH)$;
  \item\label{cluster-Icursif}
  As $\gamma\to 0$, any cluster point of $\mcI(\cP)$ is an element of 
  $\mcI(\Phi_\sH)$.
  \end{enumerate}
\end{proposition}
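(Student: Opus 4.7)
My plan is to prove (i) directly by constructing, for a cluster point $\nu=\lim_j \pi_j$ with $\pi_j\in\cI(P_{\gamma_j})$ and $\gamma_j\to 0$, a witness measure $\upsilon\in\cM(C(\bR_+,E))$ satisfying the three conditions of Definition~\ref{def-inv}, and then to deduce (ii) from (i) via Skorokhod's representation theorem.

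The natural candidate is $\upsilon\eqdef \lim_j \upsilon_j$, where $\upsilon_j\eqdef \bP^{\pi_j,\gamma_j}\sX_{\gamma_j}^{-1}$ is the distribution on $C(\bR_+,E)$ of the interpolated chain started from the invariant measure $\pi_j$. Tightness of $\{\upsilon_j\}$ follows by disintegration: given $\varepsilon>0$, the tightness of $\{\pi_j\}$ (coming from $\pi_j\Rightarrow \nu$) yields a compact $K\subset E$ with $\sup_j \pi_j(K^c)<\varepsilon/2$, and the first assertion of Theorem~\ref{th:SA=wAPT} yields a compact $\mcC\subset C(\bR_+,E)$ with $\sup_{a\in K,\gamma}\bP^{a,\gamma}(\sX_\gamma\notin \mcC)<\varepsilon/2$, so that $\upsilon_j(\mcC^c)<\varepsilon$. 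After extracting a subsequence, $\upsilon_j\Rightarrow \upsilon$. The marginal condition $\upsilon p_0^{-1}=\nu$ is immediate from continuity of the projection $p_0$ and the fact that $\upsilon_j p_0^{-1}=\pi_j\Rightarrow \nu$. For the support condition, fix $\delta>0$ and pick a compact $K$ with $\sup_j\pi_j(K^c)<\delta$; the second assertion of Theorem~\ref{th:SA=wAPT} and the same disintegration give
$$
\upsilon_j(\{d(\cdot,\Phi_\sH(K))>\epsilon\}) \le \sup_{a\in K}\bP^{a,\gamma_j}(d(\sX_{\gamma_j},\Phi_\sH(K))>\epsilon)+\delta,
$$
whose right-hand side tends to $\delta$ as $j\to\infty$. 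Since $\{d(\cdot,\Phi_\sH(K))>\epsilon\}$ is open, Portmanteau's inequality yields $\upsilon(\{d(\cdot,\Phi_\sH(K))>\epsilon\})\le \delta$; letting $\epsilon\downarrow 0$ along a countable sequence and using $d(\cdot,\Phi_\sH(K))=d(\cdot,\overline{\Phi_\sH(K)})$ gives $\upsilon(\overline{\Phi_\sH(K)}^c)\le \delta$, hence $\upsilon(\overline{\Phi_\sH(E)})\ge 1-\delta$, and the arbitrariness of $\delta$ yields $\support(\upsilon)\subset \overline{\Phi_\sH(E)}$.

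For shift-invariance, write $\Theta_t(\sx)\eqdef \sx(t+\cdot)$. The stationarity of the Markov chain under $\bP^{\pi_j,\gamma_j}$ immediately gives the discrete shift-invariance $\upsilon_j\Theta_{n\gamma_j}^{-1}=\upsilon_j$ for every $n\in\bN$. For arbitrary $t>0$, decompose $t=n_j\gamma_j+r_j$ with $r_j\in[0,\gamma_j)$; then $\upsilon_j\Theta_t^{-1}=\upsilon_j\Theta_{r_j}^{-1}$ with $r_j\to 0$. Continuity of $\Theta_t$ combined with $\upsilon_j\Rightarrow \upsilon$ yields $\upsilon_j\Theta_t^{-1}\Rightarrow \upsilon\Theta_t^{-1}$. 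On the other hand, a standard uniform-equicontinuity argument against the tight family $\{\upsilon_j\}$—exploiting that any compact subset of $C(\bR_+,E)$ is equicontinuous on bounded intervals and that $(r,\sx)\mapsto \Theta_r(\sx)$ is jointly continuous, so that $\{\Theta_r\sx:r\in[0,1],\,\sx\in\mcC\}$ is compact and any $f\in C_b$ is uniformly continuous on it—shows $\upsilon_j\Theta_{r_j}^{-1}\Rightarrow \upsilon$ as well. Identifying the limits gives $\upsilon\Theta_t^{-1}=\upsilon$ for every $t\ge 0$, completing the verification of Definition~\ref{def-inv} and hence of (i).

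For (ii), let $\mm_j\in\mcI(P_{\gamma_j})$ with $\mm_j\Rightarrow \mm$ on the Polish space $\cM(\cM(E))$. Skorokhod's representation theorem produces random variables $\Pi_j\sim \mm_j$ and $\Pi\sim \mm$ on a common probability space with $\Pi_j\Rightarrow \Pi$ almost surely. Since $\support(\mm_j)\subset \cI(P_{\gamma_j})$, we have $\Pi_j\in\cI(P_{\gamma_j})$ almost surely, so applying (i) pointwise in $\omega$ yields $\Pi\in\cI(\Phi_\sH)$ almost surely, i.e.\ $\mm\in\mcI(\Phi_\sH)$. I expect the main obstacle to lie in the continuous shift-invariance step of (i): the stationarity of the chain supplies only discrete shift-invariance, and transferring it to the limit requires controlling $\Theta_{r_j}$ as $r_j\to 0$ uniformly against the tight family $\{\upsilon_j\}$, which is where the equicontinuity characterization of compact subsets of $C(\bR_+,E)$ plays its role.
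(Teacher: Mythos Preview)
Your proof is correct and follows essentially the same route as the paper: define $\upsilon_j=\bP^{\pi_j,\gamma_j}\sX_{\gamma_j}^{-1}$, prove tightness by disintegration over a compact $K$ via Theorem~\ref{th:SA=wAPT}, verify the marginal and support conditions of Definition~\ref{def-inv} using continuity of $p_0$ and the second assertion of Theorem~\ref{th:SA=wAPT}, establish $\Theta$-invariance from the discrete stationarity $\upsilon_j\Theta_{n\gamma_j}^{-1}=\upsilon_j$ by controlling the residual shift $\Theta_{r_j}$, and finally deduce~(ii) from~(i) via Skorokhod's representation on $\cM(\cM(E))$. The only minor difference is in the residual-shift step: the paper handles $\int f\circ\Theta_{\eta_n}\,d\upsilon_n\to\int f\,d\upsilon$ by invoking Skorokhod's representation (so that $d(\Theta_{\eta_n}(\bar\sx_n),\bar\sx)\to 0$ almost surely and dominated convergence applies), whereas you argue directly from tightness and the equicontinuity of compact subsets of $C(\bR_+,E)$; both arguments are valid and equivalent in spirit.
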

In order to explore the consequences of this proposition, we introduce two
supplementary assumptions. The first is the so-called Pakes-Has'minskii 
tightness criterion, who reads as follows \cite{for-pag-99}: 
\begin{assumption*}[PH] 
  There exists measurable mappings $V:E\to[0,+\infty)$, $\psi:E\to[0,+\infty)$
and two functions $\alpha:(0,\gamma_0)\to(0,+\infty)$, $\beta:(0,\gamma_0)\to\bR$, such that 
  \begin{align*}
    & \sup_{\gamma\in (0,\gamma_0)}\frac{\beta(\gamma)}{\alpha(\gamma)}<\infty 
    \qquad\text{and}\qquad {\lim_{\|x\|\to+\infty}\psi(x)=+\infty}\,,
  \end{align*}
and for every $\gamma\in (0,\gamma_0)$,
$$
P_\gamma V\leq V-\alpha(\gamma)\psi+\beta(\gamma)\,.
$$
\end{assumption*}
We recall that a transition kernel $P$ on $E\times\mcB(E)\to [0,1]$ 
is said \emph{Feller} if the mapping $Pf:x\mapsto \int f(y)P(x,dy)$ 
is continuous for any $f\in C_b(E)$. 
If $P$ is Feller, then the set of invariant measures of $P$ 
is a closed subset of $\cM(E)$.
The following assumption ensures that for all $\gamma\in (0,\gamma_0)$,
$P_\gamma$ is Feller.
\begin{assumption*}[FL]
  For every $s\in \Xi$, $\gamma\in (0,\gamma_0)$, the function 
$h_\gamma(s,\cdot)$ is continuous.
\end{assumption*}

\begin{theorem}
  Let Assumptions (RM), (PH) and (FL) be satisfied. Let $\psi$ and $V$ be the functions
specified in (PH). Let $\nu\in\cM(E)$ s.t. $\nu(V)<\infty$.
Let $\mU \eqdef \bigcup_{\pi \in \cI(\Phi)} \support(\pi)$. Then, for all 
$\varepsilon > 0$, 
\begin{equation}
\limsup_{n\to\infty} \frac 1{n+1}\sum_{k=0}^n \bP^{\nu,\gamma}( d(X_k,\mU)>\varepsilon)\xrightarrow[\gamma\to 0]{}0\,. 
\label{eq:support}
\end{equation}
Let $N'\in\bN^*$ and $f\in C(E,\bR^{N'})$. Assume that there exists $M\geq 0$ and $\varphi:\bR^{N'}\to\bR_+$ such that 
$\lim_{\|a\|\to\infty}\varphi(a)/{\|a\|}=\infty$ and
\begin{equation}
\forall a\in E,\ \varphi(f(a))\leq M(1+\psi(a))\,.\label{eq:dVP}
\end{equation}
Then, the set $\cS_f\eqdef\{\pi(f)\,:\, \pi\in\cI(\Phi)\text{
     and } \pi(\|f(\cdot)\|)<\infty\}$ is nonempty. For all $n\in\bN$, $\gamma\in (0,\gamma_0)$, the r.v.
$$
F_n \eqdef \frac 1{n+1}\sum_{k=0}^n f(X_k)
$$
is $\bP^{\nu,\gamma}$-integrable, and satisfies for all $\varepsilon>0$,
\begin{align}
&\limsup_{n\to\infty}\ d\left(\bE^{\nu,\gamma}( F_n)\,,\cS_f\right)\xrightarrow[\gamma\to 0]{}0\,,\label{eq:CVSE} \\
&\limsup_{n\to\infty}\ \bP^{\nu,\gamma}\left(d\left(F_n\,,\cS_f\right)\geq \varepsilon\right)\xrightarrow[\gamma\to 0]{}0\,. \label{eq:CVSf} 
\end{align}
\label{the:CV}
\end{theorem}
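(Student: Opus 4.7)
The plan is to decouple the double limit $n\to\infty$ then $\gamma\to 0$. For fixed $\gamma$, Feller ergodic theory under (FL) ties the time-averaged occupation measures to $\cI(P_\gamma)$, and Proposition~\ref{prop:cluster} then transports these, as $\gamma\to 0$, into $\cI(\Phi_\sH)$ (or $\mcI(\Phi_\sH)$). Two ingredients are central. First, iterating the Pakes--Has'minskii inequality $P_\gamma V\le V-\alpha(\gamma)\psi+\beta(\gamma)$ and taking $\bE^{\nu,\gamma}$ yields
\begin{equation*}
\bE^{\nu,\gamma}\Lambda_n(\psi)\le\frac{\nu(V)}{(n+1)\alpha(\gamma)}+\frac{\beta(\gamma)}{\alpha(\gamma)},
\end{equation*}
and, applied at an invariant $\pi_\gamma\in\cI(P_\gamma)$, it gives $\pi_\gamma(\psi)\le\beta(\gamma)/\alpha(\gamma)$. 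Together with $\psi\to\infty$ at infinity and $\sup_\gamma\beta(\gamma)/\alpha(\gamma)<\infty$, this shows that $\cI(\cP)$ is tight in $\cM(E)$ and, for each fixed $\gamma$, so is $\{\bE^{\nu,\gamma}\Lambda_n:n\ge n_0(\gamma)\}$. Second, (FL) guarantees that $P_\gamma$ is Feller, and the telescoping identity $\bE^{\nu,\gamma}\Lambda_n(P_\gamma g-g)=(n+1)^{-1}(\bE^{\nu,\gamma}g(X_{n+1})-\nu(g))=O(1/n)$ for $g\in C_b(E)$ shows that every weak cluster point of $\bE^{\nu,\gamma}\Lambda_n$ belongs to $\cI(P_\gamma)$; in particular $\cI(P_\gamma)\ne\emptyset$ and $\limsup_n d_{LP}(\bE^{\nu,\gamma}\Lambda_n,\cI(P_\gamma))=0$.

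For~\eqref{eq:support}, introduce the closed set $C_\varepsilon\eqdef\{x\in E:d(x,\mU)\ge\varepsilon\}$. Then $(n+1)^{-1}\sum_{k=0}^n\bP^{\nu,\gamma}(d(X_k,\mU)>\varepsilon)\le\bE^{\nu,\gamma}\Lambda_n(C_\varepsilon)$, and Portmanteau applied to the cluster-point characterization above gives $\limsup_n\bE^{\nu,\gamma}\Lambda_n(C_\varepsilon)\le\sup_{\pi\in\cI(P_\gamma)}\pi(C_\varepsilon)$. If this failed to vanish as $\gamma\to 0$, one would extract $\pi_{\gamma_j}\in\cI(P_{\gamma_j})$ with $\pi_{\gamma_j}(C_\varepsilon)\ge\delta>0$, pass by tightness of $\cI(\cP)$ to a weak limit $\pi_\infty$, invoke Proposition~\ref{prop:cluster}(\ref{cluster-Idroit}) to obtain $\pi_\infty\in\cI(\Phi_\sH)$, and deduce by Portmanteau that $\pi_\infty(C_\varepsilon)\ge\delta$, contradicting $\support(\pi_\infty)\subset\mU$.

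For~\eqref{eq:CVSE}, the growth condition~\eqref{eq:dVP} combined with the first bound gives $\bE^{\nu,\gamma}\Lambda_n(\varphi\circ f)\le M(1+\beta(\gamma)/\alpha(\gamma))+o_n(1)$, so the pushforward family $\{f_\ast(\bE^{\nu,\gamma}\Lambda_n)\}_n$ is uniformly integrable, and by the same logic $\{f_\ast\pi_\gamma:\pi_\gamma\in\cI(\cP)\}$ is uniformly integrable uniformly in $\gamma$. Hence any weak cluster point $\pi^\ast\in\cI(P_\gamma)$ of $\bE^{\nu,\gamma}\Lambda_n$ satisfies $\bE^{\nu,\gamma}\Lambda_{n_k}(f)\to\pi^\ast(f)$ with $\pi^\ast(\|f\|)<\infty$; the same reasoning at the level of invariant measures, combined with the extraction/contradiction argument of the previous paragraph, shows $\cS_f\ne\emptyset$ and $\sup_{\pi\in\cI(P_\gamma)}d(\pi(f),\cS_f)\to 0$ as $\gamma\to 0$. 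Writing $\cI(P_\gamma)[f]\eqdef\{\pi(f):\pi\in\cI(P_\gamma)\}$, the point-set triangle inequality
\begin{equation*}
d(\bE^{\nu,\gamma}F_n,\cS_f)\le d(\bE^{\nu,\gamma}\Lambda_n(f),\cI(P_\gamma)[f])+\sup_{\pi\in\cI(P_\gamma)}d(\pi(f),\cS_f)
\end{equation*}
then yields~\eqref{eq:CVSE} upon passing $n\to\infty$ (first term vanishes) and then $\gamma\to 0$ (second term vanishes).

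For~\eqref{eq:CVSf}, the $n$-limit must be strengthened to convergence in probability. The $L^2$ bound $\|\Lambda_n(P_\gamma g-g)\|_2=O(1/\sqrt n)$ for $g\in C_b(E)$ follows from the martingale decomposition $\Lambda_n(P_\gamma g-g)=-(n+1)^{-1}\sum_k[g(X_{k+1})-P_\gamma g(X_k)]+(n+1)^{-1}(g(X_0)-g(X_{n+1}))$; applied to a countable convergence-determining subfamily of $C_b(E)$ and combined with the in-probability tightness of $\Lambda_n$ (inherited from the $\psi$-bound via Markov and the compactness of the sublevel sets of $\psi$), it yields $d_{LP}(\Lambda_n,\cI(P_\gamma))\to 0$ in $\bP^{\nu,\gamma}$-probability; a Vitali-type argument upgrades this to $d(F_n,\cI(P_\gamma)[f])\to 0$ in probability, and combining with the $\gamma$-limit from the previous paragraph gives~\eqref{eq:CVSf}. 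The main obstacle is precisely this probabilistic upgrade: showing that the random weak cluster measure of $\Lambda_n$ almost surely lies in $\cI(P_\gamma)$ with finite $f$-integral, without imposing extra moment assumptions on the $V$-increments. A clean route is to work at the level of the distributions $\mathfrak{m}_n^\gamma\eqdef\bP^{\nu,\gamma}\Lambda_n^{-1}\in\cM(\cM(E))$, establish their tightness and that their $n$-cluster points lie in $\mcI(P_\gamma)$, and then invoke Proposition~\ref{prop:cluster}(\ref{cluster-Icursif}) to perform the $\gamma\to 0$ limit at this higher level, mirroring the argument used for~\eqref{eq:CVSE}.
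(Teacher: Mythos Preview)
Your proposal is correct and follows essentially the same route as the paper: (PH) gives tightness and the uniform $\psi$-bounds, Feller pins down the $n\to\infty$ cluster points of $\bE^{\nu,\gamma}\Lambda_n$ (resp.\ $\bP^{\nu,\gamma}\Lambda_n^{-1}$) to $\cI(P_\gamma)$ (resp.\ $\mcI(P_\gamma)$), Proposition~\ref{prop:cluster} handles the $\gamma\to 0$ limit, and de la Vall\'ee-Poussin uniform integrability transfers weak convergence to convergence of $f$-integrals. For~\eqref{eq:CVSf} the paper does exactly what you suggest in your last sentence---working directly at the level of $\mathfrak m_n^\gamma=\bP^{\nu,\gamma}\Lambda_n^{-1}$ rather than attempting the in-probability $d_{LP}$ convergence first---and supplies a dedicated lemma (your ``Vitali-type argument'' made precise) showing that $\mathfrak m_n\Rightarrow\mathfrak m$ together with $\sup_n e(\mathfrak m_n)(\|f\|\1_{\|f\|>K})\to 0$ implies $\mathfrak m_n\cT_f^{-1}\Rightarrow\mathfrak m\cT_f^{-1}$.
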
 

\begin{theorem}
\label{cvg-CVSI} 
  Let Assumptions (RM), (PH) and (FL) be satisfied. Assume that $\Phi_\sH(E)$ is closed. 
Let $\psi$ and $V$ be the functions specified in (PH). Let $\nu\in\cM(E)$ s.t. $\nu(V)<\infty$.
Assume that
$$
\lim_{\|a\|\to\infty}\frac{\psi(a)}{\|a\|}=+\infty\,.
$$
For all $n\in \bN$, define 
$
\overline X_n\eqdef \frac 1{n+1} \sum_{k=0}^n X_k\,.
$
Then, for all $\varepsilon > 0$, 
\begin{align*}
&  \limsup_{n\to\infty}\ d\left(\bE^{\nu,\gamma}(\overline X_n)\,,\co(L_{\aver(\Phi)})\right)\xrightarrow[\gamma\to 0]{}0\,,\\
&\limsup_{n\to\infty}\ \bP^{\nu,\gamma}\Bigl(d\Bigl(\overline X_n\,,\co(L_{\aver(\Phi)})\Bigr)\geq \varepsilon\Bigr)
 \xrightarrow[\gamma\to 0]{}0\, , 
\end{align*}
where $\co(S)$ is the convex hull of the set $S$. 
\end{theorem}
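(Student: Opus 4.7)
The plan is to deduce Theorem~\ref{cvg-CVSI} from Theorem~\ref{the:CV} applied to the identity test function $f = \mathrm{id}_E : E\to \bR^N$. With this choice $F_n = \overline X_n$; setting $\varphi \eqdef \psi$ and $M\eqdef 1$ turns the present assumption $\psi(a)/\|a\|\to\infty$ into the required superlinear growth of $\varphi$, while~(\ref{eq:dVP}) collapses to the trivial bound $\psi(a)\leq 1+\psi(a)$. Theorem~\ref{the:CV} then asserts that the set
\begin{equation*}
  \cS_f = \{\pi(\mathrm{id}) : \pi \in \cI(\Phi_\sH),\ \pi(\|\cdot\|) < \infty\}
\end{equation*}
is nonempty, and delivers~(\ref{eq:CVSE})--(\ref{eq:CVSf}) with $\cS_f$ in place of $\co(L_{\aver(\Phi_\sH)})$. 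Since $d(y,S) = d(y,\overline{S})$ for every $y\in E$ and every $S\subseteq E$, the theorem will follow by the triangle inequality once I prove the set inclusion $\cS_f\subseteq \overline{\co(L_{\aver(\Phi_\sH)})}$.

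To prove this inclusion, fix $\pi\in\cI(\Phi_\sH)$ with $\pi(\|\cdot\|) < \infty$. By Definition~\ref{def-inv} together with the standing assumption that $\Phi_\sH(E)$ is closed, there exists a probability measure $\upsilon$ on $C(\bR_+,E)$ that is concentrated on $\Phi_\sH(E)$, invariant under the shift semigroup $(\Theta_t)_{t\geq 0}$, and satisfies $\upsilon p_0^{-1}=\pi$. Shift invariance yields $\upsilon p_t^{-1}=\pi$ for every $t\geq 0$, hence by Fubini
\begin{equation*}
  \pi(\mathrm{id}) \;=\; \int \sx(t)\,\upsilon(d\sx)\;=\;\int \aver(\sx)(T)\,\upsilon(d\sx) \qquad\text{for all }t\geq 0,\ T>0.
\end{equation*}
Applying the continuous-time Birkhoff ergodic theorem to the measure-preserving semigroup $(\Theta_t)$ on $(C(\bR_+,E),\upsilon)$ and the integrable observable $\sx\mapsto \sx(0)$ (whose $L^1(\upsilon)$-norm equals $\pi(\|\cdot\|)<\infty$), the time average $\aver(\sx)(T) = T^{-1}\int_0^T \sx(s)\,ds$ converges as $T\to\infty$ to some $g(\sx)\in E$ for $\upsilon$-almost every $\sx$ and also in $L^1(\upsilon)$, whence $\pi(\mathrm{id}) = \int g\, d\upsilon$. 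For $\upsilon$-a.e.\ $\sx$ one has $\sx\in\Phi_\sH(\sx(0))$ by construction of $\upsilon$, so $\aver(\sx)\in \aver(\Phi_\sH)(\sx(0))$ and $g(\sx)\in L_{\aver(\sx)}\subseteq L_{\aver(\Phi_\sH)}$. A standard separation-of-convex-sets argument then places the expectation of a $\upsilon$-integrable random vector whose values lie $\upsilon$-a.s.\ in $L_{\aver(\Phi_\sH)}$ into $\overline{\co(L_{\aver(\Phi_\sH)})}$, completing the inclusion.

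The main difficulty is the ergodic step: one must invoke the continuous-time Birkhoff theorem for the shift semigroup on $C(\bR_+,E)$ and secure the $L^1$ convergence of $\aver(\sx)(T)$. Both rest on the moment bound $\pi(\|\cdot\|)<\infty$, which is precisely what the superlinear growth of $\psi$ combined with Assumption~(PH) delivers through Theorem~\ref{the:CV}.
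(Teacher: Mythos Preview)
Your reduction to Theorem~\ref{the:CV} with $f=\mathrm{id}$, $\varphi=\psi$, $M=1$ is exactly the paper's opening move, and the target inclusion $\cS_f\subseteq\overline{\co(L_{\aver(\Phi_\sH)})}$ is the same. Where you diverge is in the proof of this inclusion. The paper does \emph{not} invoke Birkhoff. Instead it argues that shift-invariance of $\upsilon$ makes the family $\{p_t\}_{t>0}$, and hence $\{p_t\circ\aver\}_{t>0}$, uniformly $\upsilon$-integrable; tightness of the laws then allows Prokhorov to extract a sequence $t_n\to\infty$ along which $p_{t_n}\circ\aver$ converges in distribution to some $g$, and uniform integrability passes the mean to the limit, giving $q=\upsilon(g)$. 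A separate compactness/Portmanteau argument (restricting to a large compact set $K$ and using that accumulation points of $\aver(\sx)(t_n)$ lie in $L_{\aver(\Phi_\sH)}$ for every $\sx\in\Phi_\sH(E)$) shows the limiting law is concentrated on $\overline{L_{\aver(\Phi_\sH)}}$.

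Your route through the continuous-time Birkhoff theorem for the semigroup $(\Theta_t)_{t\ge 0}$ on $(C(\bR_+,E),\upsilon)$ with observable $p_0\in L^1(\upsilon;E)$ is correct and noticeably shorter: pointwise $\upsilon$-a.e.\ convergence of $\aver(\sx)(T)$ drops $g(\sx)$ directly into $L_{\aver(\sx)}\subseteq L_{\aver(\Phi_\sH)}$, with no subsequence extraction, no tightness bookkeeping, and no Portmanteau step. The $L^1$ part of Birkhoff replaces the paper's uniform-integrability passage to the limit. The paper's argument is more self-contained (it stays within the Prokhorov/tightness machinery already set up elsewhere in the article), while yours trades that for an appeal to a classical ergodic theorem and gains brevity and transparency.
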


\begin{theorem}
\label{cvg-XY}
  Let Assumptions (RM), (PH) and (FL) be satisfied.  Assume that $\Phi_\sH(E)$ is closed. Let $\psi$ and $V$ be the functions
specified in (PH). Let $\nu\in\cM(E)$ s.t. $\nu(V)<\infty$.
Then, for all $\varepsilon > 0$, 
$$
\limsup_{n\to\infty}\ \frac 1{n+1}\sum_{k=0}^n 
 \bP^{\nu,\gamma}\left(d\left(X_k\,,\text{BC}_{\Phi} \right) 
   \geq \varepsilon\right) \xrightarrow[\gamma\to 0]{}0\,.
$$
\end{theorem}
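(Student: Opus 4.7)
My plan is to derive Theorem \ref{cvg-XY} essentially as a corollary of Theorem \ref{the:CV} (specifically, of its first conclusion \eqref{eq:support}) combined with the Poincar\'e-type recurrence result stated in Proposition \ref{poincare}. Since the assumptions of Theorem \ref{cvg-XY} include (RM), (PH), (FL) and $\nu(V) < \infty$, the hypotheses of Theorem \ref{the:CV} are verified, so \eqref{eq:support} is available and gives
\[
\limsup_{n\to\infty} \frac{1}{n+1}\sum_{k=0}^n \bP^{\nu,\gamma}(d(X_k,\mU) > \varepsilon) \xrightarrow[\gamma\to 0]{} 0,
\]
where $\mU = \bigcup_{\pi \in \cI(\Phi_\sH)} \support(\pi)$. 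The entire task therefore reduces to comparing $\mU$ with the Birkhoff center $\text{BC}_{\Phi_\sH}$.

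The key step is to establish the inclusion $\mU \subset \text{BC}_{\Phi_\sH}$, from which one immediately obtains $d(X_k, \text{BC}_{\Phi_\sH}) \leq d(X_k, \mU)$ pointwise, and hence the desired Ces\`aro bound by monotonicity. To prove this inclusion, I would invoke Proposition \ref{poincare}. Its hypothesis, namely that $\Phi_\sH(E)$ is closed in $(C(\bR_+,E),d)$, is exactly the extra assumption present in Theorem \ref{cvg-XY} compared to Theorem \ref{the:CV}; this is what makes the Poincar\'e-type statement applicable. Proposition \ref{poincare} then asserts that every $\pi \in \cI(\Phi_\sH)$ satisfies $\pi(\text{BC}_{\Phi_\sH}) = 1$.

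Since $\text{BC}_{\Phi_\sH}$ is closed by its very definition as the closure of the set of recurrent points, the property $\pi(\text{BC}_{\Phi_\sH}) = 1$ forces $\support(\pi) \subset \text{BC}_{\Phi_\sH}$: indeed, if a point $a \in \support(\pi)$ were not in the closed set $\text{BC}_{\Phi_\sH}$, there would exist an open neighborhood of $a$ disjoint from $\text{BC}_{\Phi_\sH}$ with strictly positive $\pi$-measure, contradicting $\pi(\text{BC}_{\Phi_\sH}^c) = 0$. Taking the union over all $\pi \in \cI(\Phi_\sH)$ yields $\mU \subset \text{BC}_{\Phi_\sH}$, which, combined with \eqref{eq:support}, completes the argument.

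I do not anticipate a real obstacle here: all the substantial work has been packaged into Theorem \ref{the:CV} (which controls the Ces\`aro occupation along the chain via its cluster-point analysis) and Proposition \ref{poincare} (which transfers invariant mass to the Birkhoff center). The only subtlety to watch for is making sure that the closedness hypothesis on $\Phi_\sH(E)$ is genuinely needed at the Poincar\'e step and nowhere else; this is consistent with the fact that Theorem \ref{the:CV} itself is stated without such closedness.
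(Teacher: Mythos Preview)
Your proposal is correct and follows essentially the same route as the paper's proof: invoke~\eqref{eq:support} from Theorem~\ref{the:CV}, then use Proposition~\ref{poincare} (with the closedness of $\Phi_\sH(E)$) to obtain $\support(\pi)\subset\text{BC}_{\Phi_\sH}$ for every $\pi\in\cI(\Phi_\sH)$, hence $\mU\subset\text{BC}_{\Phi_\sH}$, and conclude by monotonicity. The paper phrases the final step as the set inclusion $\mU_\varepsilon\subset(\text{BC}_{\Phi_\sH})_\varepsilon$ rather than the pointwise distance inequality you wrote, but these are equivalent.
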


\section{Proof of Theorem~\ref{th:SA=wAPT}}
\label{sec-prf-narrow}

The first lemma is a straightforward adaptation of the \emph{convergence theorem} \cite[Chap.~1.4, Th.~1, pp. 60]{aub-cel-(livre)84}.
Hence, the proof is omitted.
We denote by $\lambda_T$ the Lebesgue measure on $[0,T]$.
\begin{lemma}
  \label{lem:cvth}
Let $\{F_\xi:\xi\in\Xi\}$ be a family of mappings on $ E\rightrightarrows E$.
Let $T>0$ and for all $n\in\bN$, let 
$u_n:[0,T]\to E$, $v_n:\Xi\times[0,T]\to E$ be measurable maps
w.r.t $\mcB([0,T])$ and $\mcG \otimes \mcB([0,T])$ respectively. Note for simplicity
$\cL^1\eqdef \cL^1(\Xi\times [0,T],\mcG\otimes \mcB([0,T]),\mu\otimes \lambda_T;\bR)$.
Assume the following.
\begin{enumerate}[i)]
\item \label{hyp:cvToGraph} For all $(\xi,t)$ $\mu\otimes\lambda_T$-a.e., 
$(u_n(t),v_n(\xi,t))\to_n \mathrm{gr}(F_\xi)$. 
\item $(u_n)$ converges $\lambda_T$-a.e. to a function $u:[0,T]\to E$.
\item \label{hyp:cvL1} For all $n$, $v_n\in \cL^1$ and converges weakly in $\cL^1$ to a 
function $v:\Xi\times [0,T]\to E$.
\item For all $\xi$ $\mu$-a.e., $F_\xi$ is proper upper semi continuous
with closed convex values.
\end{enumerate}
Then, for all $(\xi,t)$ $\mu\otimes \lambda_T$-a.e., $v(\xi,t)\in F_\xi(u(t))$.
\end{lemma}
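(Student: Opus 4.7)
The plan is to adapt the classical Aubin--Cellina convergence theorem to the present parametrised setting indexed by $\xi$ and to the \emph{asymptotic} graph inclusion of hypothesis (i) (which replaces the usual exact inclusion $v_n(\xi,t)\in F_\xi(u_n(t))$). The three ingredients are Mazur's lemma in $\cL^1$, upper semicontinuity of $F_\xi$ at the limit point $u(t)$, and convexity together with closedness of $F_\xi(u(t))$.

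First, by Mazur's lemma applied to the weakly convergent sequence $v_n\rightharpoonup v$ in the Banach space $\cL^1$, there exist finite convex combinations $\tilde v_n = \sum_{k=n}^{N_n}\alpha_k^n v_k$, with $\alpha_k^n\geq 0$ and $\sum_k \alpha_k^n = 1$, that converge strongly to $v$ in $\cL^1$. Passing to a subsequence, we may assume $\tilde v_n(\xi,t)\to v(\xi,t)$ outside a $\mu\otimes\lambda_T$-null set. Enlarging this null set to absorb the exceptional sets coming from hypotheses (i), (ii) and (iv), we fix a full-measure set $\Omega_0\subset\Xi\times[0,T]$ on which the rest of the argument is performed pointwise.

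Second, fix $(\xi,t)\in \Omega_0$ and $\varepsilon>0$. Since $F_\xi(u(t))$ is closed and convex, its open $\varepsilon$-enlargement $F_\xi(u(t))+B_\varepsilon$ is an open set containing $F_\xi(u(t))$; the usc of $F_\xi$ at $u(t)$ thus yields $\delta>0$ such that $\|y-u(t)\|<\delta$ implies $F_\xi(y)\subset F_\xi(u(t))+B_\varepsilon$. By (i) and (ii), for all $n$ large enough one can choose $(a_n,b_n)\in\graph(F_\xi)$ with $\|(a_n,b_n)-(u_n(t),v_n(\xi,t))\|<\varepsilon$ and $\|u_n(t)-u(t)\|<\delta/2$, which forces $\|a_n-u(t)\|<\delta$. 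Hence $b_n\in F_\xi(a_n)\subset F_\xi(u(t))+B_\varepsilon$, and consequently $v_n(\xi,t)\in F_\xi(u(t)) + B_{2\varepsilon}$ eventually in $n$.

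Third, set $C_\varepsilon \eqdef F_\xi(u(t))+\overline{B_{2\varepsilon}}$; this set is convex and closed (Minkowski sum of a closed set and a compact ball in $\bR^N$). For $n$ large enough, every index $k\geq n$ involved in $\tilde v_n$ satisfies $v_k(\xi,t)\in C_\varepsilon$, so by convexity $\tilde v_n(\xi,t)\in C_\varepsilon$. Taking $n\to\infty$ along the a.e.\ convergent subsequence yields $v(\xi,t)\in C_\varepsilon$, and letting $\varepsilon\downarrow 0$ gives $v(\xi,t)\in F_\xi(u(t))$ because $F_\xi(u(t))$ is closed. The main technical point is the second step: converting the asymptotic graph condition of (i) into an honest $\varepsilon$-inclusion $v_n(\xi,t)\in F_\xi(u(t))+B_{2\varepsilon}$ \emph{at the limit point} $u(t)$, which is precisely where the usc of $F_\xi$ is essential and where the argument departs from the standard Aubin--Cellina proof.
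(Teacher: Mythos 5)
Your proof is correct, and it is essentially the argument the paper has in mind: the paper omits the proof, stating only that the lemma is a ``straightforward adaptation of the convergence theorem'' of Aubin--Cellina, and your Mazur / upper-semicontinuity / convex-closed-limit chain is precisely that adaptation, with the only nontrivial addition being your second step, which replaces the exact inclusion $v_n(\xi,t)\in F_\xi(u_n(t))$ of the classical statement by the asymptotic graph condition of hypothesis (i) via a nearby point $(a_n,b_n)\in\graph(F_\xi)$.
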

Given $T > 0$ and $0<\delta\leq T$, we denote by 
\[ 
w_\sx^T(\delta) \eqdef 
\sup\{\|\sx(t)-\sx(s)\| :|t-s|\leq \delta, (t,s)\in [0,T]^2\}
\] 
the modulus of continuity on $[0,T]$ of any $\sx\in C(\bR_+, E)$. 
\begin{lemma}
\label{h-ui} 
For all $n\in \bN$, denote by $\mcF_n\subset\mcF$ the $\sigma$-field generated 
by the r.v. $\{X_{k},0\leq k\leq n\}$. For all $\gamma\in (0,\gamma_0)$, define 
$Z^\gamma_{n+1} \eqdef \gamma^{-1}(X_{n+1}-X_{n})$.
Let $K\subset E$ be compact. Let 
$\{\bar \bP^{a,\gamma},a\in K,0<\gamma<\gamma_0\}$ be a family of probability 
measures on $(\Omega,\mcF)$ satisfying the following uniform integrability 
condition:
\begin{equation}
\sup_{{n\in\bN^*, a\in K, \gamma\in (0,\gamma_0)}} \bar \bE^{a,\gamma}(\|Z^\gamma_{n}\|\1_{\|Z^\gamma_{n}\|>A})\xrightarrow[]{A\to+\infty} 0\,.\label{eq:ui}
\end{equation}
Then, $\{\bar P^{a,\gamma}\sX_\gamma^{-1}:a\in K,0<\gamma<\gamma_0\}$ is 
tight. Moreover, for any $T > 0$, 
  \begin{equation}
\sup_{a\in K}\bar\bP^{a,\gamma}\left(\max_{0\leq n\leq \lfloor\frac T\gamma\rfloor}\gamma\left\|\sum_{k=0}^n\left(Z^\gamma_{k+1}-\bar\bE^{a,\gamma}(Z^\gamma_{k+1}|\mcF_k)\right)
\right\|>\varepsilon\right)\xrightarrow[]{\gamma\to 0} 0\,.
\label{eq:asym-rate-change}
\end{equation}
\label{lem:tightC}
\end{lemma}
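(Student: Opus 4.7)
The plan is to prove both statements via a single truncation strategy, using the uniform integrability hypothesis~\eqref{eq:ui} as the sole control. For any threshold $A>0$, split $Z^\gamma_{k+1} = Z^{\gamma,A}_{k+1} + R^{\gamma,A}_{k+1}$ with $Z^{\gamma,A}_{k+1}\eqdef Z^\gamma_{k+1}\1_{\|Z^\gamma_{k+1}\|\leq A}$, and set $\eta_A\eqdef \sup_{n,a,\gamma}\bar\bE^{a,\gamma}\|R^{\gamma,A}_n\|$. By~\eqref{eq:ui}, $\eta_A\to 0$ as $A\to\infty$.

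For tightness, I verify the standard Arzel\`a--Ascoli type criterion on $C(\bR_+,E)$: (i) the marginals at $t=0$ are supported in the compact set $K$, hence tight; (ii) for every $T,\eta>0$, $\lim_{\delta\to 0}\sup_{a,\gamma}\bar\bP^{a,\gamma}(w_{\sX_\gamma}^T(\delta)>\eta)=0$. Decompose $\sX_\gamma = \sX_\gamma^{(A)} + R_\gamma^{(A)}$ using the truncated and residual increments, with $R_\gamma^{(A)}(0)=0$. The piecewise-linear path $\sX_\gamma^{(A)}$ has slopes of norm at most $A$, hence is $A$-Lipschitz and $w_{\sX_\gamma^{(A)}}^T(\delta)\leq A\delta$. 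For the residual,
\[
\sup_{t\in[0,T]}\|R_\gamma^{(A)}(t)\|\leq \gamma\sum_{k=0}^{\lfloor T/\gamma\rfloor}\|R^{\gamma,A}_{k+1}\|,
\]
so $\bar\bE^{a,\gamma}\,w_{R_\gamma^{(A)}}^T(\delta)\leq 2(T+\gamma_0)\eta_A$, uniformly in $a,\gamma,\delta$. Markov's inequality yields $\sup_{a,\gamma}\bar\bP^{a,\gamma}(w_{R_\gamma^{(A)}}^T(\delta)>\eta/2)\leq 4(T+\gamma_0)\eta_A/\eta$. Choosing $A$ large so that this bound is arbitrarily small, then $\delta<\eta/(2A)$ so that $A\delta<\eta/2$, delivers the desired uniform smallness.

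For~\eqref{eq:asym-rate-change}, write the martingale $M_n\eqdef\gamma\sum_{k=0}^{n}(Z^\gamma_{k+1}-\bar\bE^{a,\gamma}(Z^\gamma_{k+1}|\mcF_k)) = \tilde M^{(A)}_n + \tilde R^{(A)}_n$, where $\tilde M^{(A)}$ and $\tilde R^{(A)}$ use the truncated and residual increments respectively. Since $(\|\tilde M^{(A)}_n\|^2)$ is a nonnegative submartingale (Jensen) and its increments are bounded in norm by $2A\gamma$, Doob's maximal inequality gives
\[
\bar\bE^{a,\gamma}\Bigl[\max_{n\leq\lfloor T/\gamma\rfloor}\|\tilde M^{(A)}_n\|^2\Bigr]
\leq 4(\lfloor T/\gamma\rfloor+1)(2A\gamma)^2\leq 16A^2(T+\gamma_0)\gamma,
\]
which vanishes as $\gamma\to 0$ for fixed $A$. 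By Jensen applied to the conditional expectation and the triangle inequality, $\bar\bE^{a,\gamma}\max_n\|\tilde R^{(A)}_n\|\leq 2\gamma\sum_{k=0}^{\lfloor T/\gamma\rfloor}\bar\bE^{a,\gamma}\|R^{\gamma,A}_{k+1}\|\leq 2(T+\gamma_0)\eta_A$ uniformly in $(a,\gamma)$. A union bound on $\{\|\tilde M^{(A)}\|>\varepsilon/2\}\cup\{\|\tilde R^{(A)}\|>\varepsilon/2\}$---first fixing $A$ so the residual piece contributes less than $\varepsilon/2$ via Markov, then sending $\gamma\to 0$ to kill the $L^2$ piece---completes the proof.

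The main obstacle is that only an $L^1$-style uniform integrability is postulated; no uniform $L^{1+\epsilon}$ moment bound is available at this level of generality. This forces the two-parameter truncation, where $A$ must be fixed \emph{first} (to absorb the heavy tails uniformly in $(a,\gamma)$) before any limit in $\delta$ or $\gamma$ can be taken. Once this order of operations is respected, every estimate reduces to Doob's inequality and Markov.
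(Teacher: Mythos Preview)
Your proof is correct and follows essentially the same route as the paper: truncate the increments at a level $A$ (the paper calls it $R$), control the bounded part by a Lipschitz/$L^2$ estimate and the residual by the uniform integrability hypothesis~\eqref{eq:ui} together with Markov's inequality. The only cosmetic differences are that the paper couples the truncation level to $\delta$ (choosing $R=\varepsilon/(2\delta)$) rather than fixing $A$ first, and applies Doob's $L^1$ maximal inequality to the residual martingale rather than your direct triangle-inequality bound; both variants are equivalent in strength.
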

\begin{proof}
We prove the first point. Set $T>0$, let $0<\delta\leq T$, and choose 
$0\leq s\leq t\leq T$ s.t. $t-s\leq \delta$. Let $\gamma\in(0,\gamma_0)$ and 
set $n\eqdef \lfloor \frac t\gamma\rfloor$, 
$m\eqdef \lfloor \frac s\gamma\rfloor$.  For any $R>0$,
$$
\|\sX_\gamma(t)-\sX_\gamma(s)\|\leq \gamma\!\!\sum_{k=m+1}^{n+1}\!\!\|Z_k^{\gamma}\|\leq \gamma(n-m+1)R + \gamma\!\!\sum_{k=m+1}^{n+1}\!\!\|Z_k^{\gamma}\|\1_{\|Z_k^{\gamma}\|>R}\,.
$$
Noting that $n-m+1\leq \frac{\delta}\gamma$ and using Markov inequality, we obtain
\begin{align*}
  \bar\bP^{a,\gamma}\sX_\gamma^{-1}(\{x:w_x^T(\delta)>\varepsilon\}) &\leq  \bar\bP^{a,\gamma}\left(\gamma\!\!\sum_{k=1}^{ \lfloor \frac T\gamma\rfloor+1}\!\!\|Z_k^{\gamma}\|\1_{\|Z_k^{\gamma}\|>R}>\varepsilon-\delta R\right) \\
&\leq T \frac{\sup_{k\in\bN^*} \bar \bE^{a,\gamma}\left(\|Z_k^{\gamma}\|\1_{\|Z_k^{\gamma}\|>R}\right)}{\varepsilon-\delta R}\,,
\end{align*}
provided that $R\delta<\varepsilon$. Choosing $R=\varepsilon/(2\delta)$ and 
using the uniform integrability, 
\[
\sup_{a\in K,0<\gamma<\gamma_0}
\bar\bP^{a,\gamma}\sX_\gamma^{-1}(\{x:w_x^T(\delta)>\varepsilon\})
\xrightarrow[]{\delta\to 0} 0\, . 
\]
As $\{\bar\bP^{a,\gamma}\sX_\gamma^{-1}p_0^{-1},0<\gamma<\gamma_0,a\in K\}$ 
is obviously tight, the tightness of 
$\{\bar P^{a,\gamma}\sX_\gamma^{-1},a\in K,0<\gamma<\gamma_0\}$ follows 
from \cite[Theorem 7.3]{bil-(livre)99}.

We prove the second point. 
We define $M^{a,\gamma}_{n+1} \eqdef \sum_{k=0}^n\left(Z^\gamma_{k+1}-\bar\bE^{a,\gamma}(Z^\gamma_{k+1}|\mcF_k)\right)$.
We introduce
\begin{align*}
  \eta_{n+1}^{a,\gamma,\leq} &\eqdef  Z_{n+1}^{\gamma}\1_{\|Z_{n+1}^{\gamma}\|\leq R}-\bar\bE^{a,\gamma}\left(Z_{n+1}^{\gamma}\1_{\|Z_{n+1}^{\gamma}\|\leq R}|\mcF_n\right)
\end{align*}
and we define $\eta_{n+1}^{a,\gamma,>}$ in a similar way, by replacing $\leq$ with $>$ in the right hand side of the above equation.
Clearly, for all $a\in K$, $M_{n+1}^{a,\gamma} =  \eta_{n+1}^{a,\gamma,\leq}+ \eta_{n+1}^{a,\gamma,>}$. Thus,
$$
\gamma\left\|M^{a,\gamma}_{n+1}\right\|\leq \|S_{n+1}^{a,\gamma,\leq}\| + \|S_{n+1}^{a,\gamma,>}\| 
$$
where $S_{n+1}^{a,\gamma,\leq}\eqdef \gamma \sum_{k=0}^n\eta_{k+1}^{a,\gamma,\leq}$ and 
$S_{n+1}^{a,\gamma,>}$ is defined similarly. 
Under $\bar\bP^{a,\gamma}$, the random processes $S^{a,\gamma,\leq}$ and $S^{a,\gamma,>}$ are $\mcF_n$-adapted martingales.
Defining $q_\gamma\eqdef \lfloor\frac T\gamma\rfloor+1$, we obtain by Doob's 
martingale inequality and by the boundedness of the increments of 
$S_{n}^{a,\gamma,\leq}$ that  
$$ 
\bar\bP^{a,\gamma}\left(\max_{1\leq n\leq q_\gamma}\|S_{n}^{a,\gamma,\leq}\|
      >\varepsilon\right) \leq 
\frac{\bar\bE^{a,\gamma} (\|S_{q_\gamma}^{a,\gamma,\leq}\|)}{\varepsilon} 
\leq \frac{\bar\bE^{a,\gamma} (\|S_{q_\gamma}^{a,\gamma,\leq}\|^2)^{1/2}}{\varepsilon} 
\leq \frac 2\varepsilon \gamma R \sqrt
{q_\gamma}\,, 
$$
and the right hand side tends to zero uniformly in $a\in K$ as $\gamma\to 0$. By the same inequality,
$$
\bar\bP^{a,\gamma}\left(\max_{1\leq n\leq q_\gamma}\|S_{n}^{a,\gamma,>}\|>\varepsilon\right) \leq \frac 2\varepsilon q_\gamma \gamma \sup_{k\in\bN^*} \bar\bE^{a,\gamma}\left(\|Z_k^{\gamma}\|\1_{\|Z_k^{\gamma}\|>R}\right)\,.
$$
Choose an arbitrarily small $\delta>0$ and select $R$ as large as need
in order that the supremum in the right hand side is no larger than
$\varepsilon \delta/(2T+2\gamma_0)$. Then the left hand side is no larger than
$\delta$. Hence, the proof is concluded.
\end{proof}

For any $R>0$, define 
$h_{\gamma,R}(s,a)\eqdef h_{\gamma}(s,a)\1_{\|a\|\leq R}$.  Let 
$H_R(s,x)\eqdef H(s,x)$ if $\|x\|<R$, $\{0\}$ if $\|x\|>R$, and $E$ otherwise.
Denote the corresponding selection integral as 
$\sH_R(a) = \int H_R(s,a)\, \mu(ds)$. Define 
$\tau_R(x)\eqdef\inf\{n\in \bN:x_n>R\}$ for all $x\in \Omega$. We also 
introduce the measurable mapping $B_R:\Omega\to \Omega$, given by
$$
B_R(x) : n\mapsto x_n\1_{n< \tau_R(x)} + x_{\tau_R(x)}\1_{n\geq \tau_R(x)}
$$
for all $x\in \Omega$ and all $n\in \bN$. 
\begin{lemma}
\label{lem:aptTrunk}
Suppose that Assumption (RM) is satisfied. Then, for every compact set 
$K\subset E$, the family 
$\{\bP^{a,\gamma}B_R^{-1}\sX_\gamma^{-1},\gamma\in (0,\gamma_0), a\in K\}$ 
is tight. Moreover, for every $\varepsilon>0$, 
\[
\sup_{a\in K}\,\bP^{a,\gamma}B_R^{-1}\left[d(\sX_\gamma,\Phi_{\sH_R}(K))>\varepsilon\right]\xrightarrow[\gamma\to 0]{}0\,.
\] 
\end{lemma}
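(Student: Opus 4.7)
The plan is to apply Lemma~\ref{lem:tightC} to the stopped family $\bar\bP^{a,\gamma}\eqdef\bP^{a,\gamma}B_R^{-1}$, and then identify every narrow cluster point of $\bar\bP^{a,\gamma}\sX_\gamma^{-1}$ with a solution of the differential inclusion driven by $\sH_R$, using Lemma~\ref{lem:cvth} in the spirit of the Aubin--Cellina convergence theorem.

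\textbf{Step 1: Uniform integrability and tightness.} The increments $Z^\gamma_{n+1}$ of the stopped chain $B_R(X)$ vanish for $n\ge\tau_R$, while for $n<\tau_R$ the iterate $X_n$ lies in the compact set $K_R\eqdef\bar B(0,R)$. Applying Assumption~(RM~vi) with $K=K_R$ therefore provides a uniform $\cL^{1+\epsilon_{K_R}}$-bound on $Z^\gamma_{n+1}$, from which the uniform integrability condition~\eqref{eq:ui} of Lemma~\ref{lem:tightC} follows. That lemma then delivers both the tightness of $\{\bar\bP^{a,\gamma}\sX_\gamma^{-1}:a\in K,\gamma\in(0,\gamma_0)\}$ and the uniform-in-$a$ decay~\eqref{eq:asym-rate-change} of the martingale fluctuation.

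\textbf{Step 2: Skorokhod extraction and identification of the limit.} Suppose the narrow convergence assertion fails: there exist $\varepsilon,\delta>0$ and sequences $\gamma_k\downarrow 0$, $a_k\in K$ such that $\bar\bP^{a_k,\gamma_k}\bigl(d(\sX_{\gamma_k},\Phi_{\sH_R}(K))>\varepsilon\bigr)\ge\delta$. Compactness of $K$ and Step~1 allow to extract a subsequence along which $a_k\to a^\star\in K$ and the laws $\bar\bP^{a_k,\gamma_k}\sX_{\gamma_k}^{-1}$ converge narrowly; Skorokhod's representation realizes them as processes $\tilde\sX_k\to\sx^\star$ almost surely, uniformly on compacts, with $\sx^\star(0)=a^\star$. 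Fix $T>0$, and set $u_k(t)\eqdef B_R(X)_{\lfloor t/\gamma_k\rfloor}$, $v_k(\xi,t)\eqdef h_{\gamma_k,R}(\xi,u_k(t))$ on $[0,T]$. The decomposition~\eqref{eq:decomp-markov-drift} together with~\eqref{eq:asym-rate-change} yields
\[
\tilde\sX_k(t)\;=\;a_k\;+\;\int_0^t\!\int_\Xi v_k(\xi,s)\,\mu(d\xi)\,ds\;+\;\rho_k(t),\qquad \sup_{t\in[0,T]}\|\rho_k(t)\|\xrightarrow[k\to\infty]{\text{a.s.}}0\,.
\]
The bound~\eqref{eq:moment-bis} with $K=K_R$ keeps $(v_k)$ bounded in $\cL^{1+\epsilon_{K_R}}(\Xi\times[0,T],\mu\otimes\lambda_T)$, hence uniformly integrable, so Dunford--Pettis allows a further extraction along which $v_k\rightharpoonup v$ weakly in $\cL^1$. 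Passing to the limit in the displayed identity gives $\sx^\star(t)=a^\star+\int_0^t\!\int_\Xi v(\xi,s)\,\mu(d\xi)\,ds$, whence $\dot\sx^\star(t)=\int v(\xi,t)\,\mu(d\xi)$ for a.e.\ $t$. On the other hand $u_k(t)\to\sx^\star(t)$ $\lambda_T$-a.e., and Assumption~(RM~ii), together with the fact that $H_R(\xi,x)=E$ on the Lebesgue-null boundary $\{\|x\|=R\}$, provides the graph convergence $(u_k(t),v_k(\xi,t))\to \graph(H_R(\xi,\cdot))$ $\mu\otimes\lambda_T$-a.e. Lemma~\ref{lem:cvth} applied with $F_\xi=H_R(\xi,\cdot)$ then yields $v(\xi,t)\in H_R(\xi,\sx^\star(t))$ $\mu\otimes\lambda_T$-a.e., so by definition of the selection integral $\dot\sx^\star(t)\in\sH_R(\sx^\star(t))$ a.e.\ on $[0,T]$. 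A diagonal argument in $T$ gives $\sx^\star\in\Phi_{\sH_R}(a^\star)\subset\Phi_{\sH_R}(K)$ almost surely, contradicting the lower bound $\delta$.

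\textbf{Main obstacle.} The crux is to secure the two hypotheses of Lemma~\ref{lem:cvth} simultaneously: the weak $\cL^1$-convergence of $v_k$, which rests on the moment bound~\eqref{eq:moment-bis} via Dunford--Pettis, and the pointwise graph convergence $(u_k,v_k)\to\graph(H_R)$, which rests on Assumption~(RM~ii) and on a careful handling of the truncation boundary $\{\|x\|=R\}$, where $H_R$ has been artificially enlarged to $E$ precisely so that the graph condition holds trivially on an otherwise problematic null set.
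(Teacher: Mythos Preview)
Your proposal is correct and follows the same route as the paper: verify~\eqref{eq:ui} from the moment bound, apply Lemma~\ref{lem:tightC} for tightness and the martingale decay, pass to a Skorokhod realization, extract a weak limit of the drift integrand (you invoke Dunford--Pettis in $\cL^1$; the paper uses Banach--Alaoglu in $\cL^{1+\epsilon}$---both work since the bound~\eqref{eq:moment-bis} gives uniform integrability), and close with Lemma~\ref{lem:cvth}. Two small slips worth tightening: \eqref{eq:asym-rate-change} yields convergence only in probability, so one more subsequence extraction is needed before writing $\rho_k\to 0$ a.s.; and the Dunford--Pettis extraction should be performed \emph{after} fixing a single realization $\omega'$ on the Skorokhod space (as the paper does explicitly), since the weak limit $v$ and the extracting subsequence are $\omega'$-dependent.
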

\begin{proof}
We introduce the measurable mapping $\Delta_{\gamma,R}:\Omega\to E^\bN$ s.t. for all $x\in \Omega$,
$\Delta_{\gamma,R}(x)(0)\eqdef 0$ and
$$
\Delta_{\gamma,R}(x)(n)\eqdef (x_n-x_{0}) - \gamma \sum_{k = 0}^{n-1} \int h_{\gamma,R}(s,x_k)\mu(ds)
$$
for all $n\in \bN^*$. We also introduce the measurable mapping $\sG_{\gamma,R}:C(\bR_+,E)\to C(\bR_+,E)$ 
s.t. for all $\sx\in C(\bR_+,E)$, 
$$
\sG_{\gamma,R}(\sx)(t) \eqdef 
\rev{\int_0^t\int h_{\gamma,R}(s, \sx(\gamma \lfloor  u/\gamma\rfloor)) \, \mu(ds)\,du\, .} 
$$
We first express the interpolated process in integral form. For every $x\in E^\bN$ and $t\geq 0$,
$$
\sX_\gamma(x)(t)= x_0 
+ \int_0^t \gamma^{-1}(x_{\lfloor \frac u\gamma\rfloor +1}
 -x_{\lfloor \frac u\gamma\rfloor})\, du \,,
$$
from which we obtain the decomposition
\begin{equation}
\sX_\gamma(x) = x_0 + \sG_{\gamma,R}\circ \sX_\gamma(x)  + \sX_\gamma\circ \Delta_{\gamma,R}(x)\,. \label{eq:expr-itp}
\end{equation}
\rev{ The uniform integrability condition~(\ref{eq:ui})
is satisfied when letting $\bar\bP^{a,\gamma}\eqdef \bP^{a,\gamma}B_R^{-1}$. Indeed, 
\begin{align*}
  \bar\bE^{a,\gamma}(\|\gamma^{-1}(X_{n+1}-X_n)\|^{1+\epsilon_K}) &= \bE^{a,\gamma}\left(\left\|\gamma^{-1}(X_{n+1}-X_n)\right\|^{1+\epsilon_K} \1_{\tau_R(X)>n}\right) \,,
\end{align*}
and the condition~\eqref{eq:ui} follows from hypothesis~(\ref{eq:moment}).
On the other hand, as the event $\{\tau_R(X)>n\}$ is $\mcF_n$-measurable,
\begin{align*}
  \bar \bE^{a,\gamma}(\gamma^{-1}(X_{n+1}-X_n)|\mcF_n) &= \bE^{a,\gamma}(\gamma^{-1}(X_{n+1}-X_n)|\mcF_n)  \1_{\tau_R(X)>n} \\
 &= \int h_\gamma(s,X_n)\mu(ds) \1_{\tau_R(X)>n}  \\
 &= \int h_{\gamma,R}(s,X_n)\mu(ds)  \,.
\end{align*}
}Thus, Lemma~\ref{lem:tightC} implies that for all $\varepsilon>0$ and 
$T>0$,
$$
\sup_{a\in K}\bar\bP^{a,\gamma}\left(
\max_{0\leq n\leq \lfloor\frac T\gamma\rfloor}
 \gamma\left\|\Delta_{\gamma,R}(x)(n+1)\right\|>\varepsilon\right) 
\xrightarrow[]{\gamma\to 0} 0\,.
$$
It is easy to see that for all $x\in \Omega$, the function 
$\sX_\gamma\circ \Delta_{\gamma,R}(x)$ is bounded on every compact interval 
$[0,T]$ by $\max_{n\leq \lfloor  T/\gamma\rfloor}\|\Delta_{n+1}^{\gamma}\|$. 
This in turns leads to:
\begin{equation}
\sup_{a\in K}\bar \bP^{a,\gamma}(\|\sX_\gamma\circ \Delta_{\gamma,R}\|_{\infty,T}>\varepsilon) \xrightarrow[]{\gamma\to 0}0\,,\label{eq:rate-of-change-continuous}
\end{equation}
where the notation $\|\sx\|_{\infty,T}$ stands for the uniform norm of $\sx$ on $[0,T]$.

As a second consequence of Lemma~\ref{lem:tightC}, the family 
$\{\bar\bP^{a,\gamma}\sX_\gamma^{-1},0<\gamma<\gamma_0,a\in K\}$ is tight. 
Choose any subsequence $(a_n,\gamma_n)$ s.t. $\gamma_n\to 0$ and $a_n\in K$.
Using Prokhorov's theorem and the compactness of $K$, there exists a subsequence
(which we still denote by $(a_n,\gamma_n)$) and there exist some $a^\star\in K$ and some 
 $\upsilon\in \cM(C(\bR_+,E))$ such that  $a_n\to a^\star$ and
$\bar\bP^{a_n,\gamma_n}\sX_{\gamma_n}^{-1}$ converges narrowly to~$\upsilon$.
By Skorokhod's representation theorem, we introduce some r.v. ${\mathsf z}$, $\{\sx_n,n\in\bN\}$ on $C(\bR_+, E)$
with respective distributions $\upsilon$ and $\bar\bP^{a_n,\gamma_n}\sX_{\gamma_n}^{-1}$,
defined on some other probability space $(\Omega',\mcF',\bP')$ 
and such that $d(\sx_n(\omega), {\mathsf z}(\omega))\to 0$
for all $\omega\in \Omega'$.
By~\eqref{eq:expr-itp} and~\eqref{eq:rate-of-change-continuous}, the sequence 
of r.v.
$$
 \sx_n -  \sx_n(0) - \sG_{\gamma_n,R}(\sx_n)
$$
converges in probability to zero in $(\Omega',\mcF',\bP')$, as $n\to\infty$.
One can extract a subsequence under which this convergence holds in the almost sure sense.
Therefore, there exists an event of probability one s.t., everywhere on this event,  
$$
{\mathsf z}(t) = {\mathsf z}(0) 
 + \lim_{n\to\infty} \int_0^t\int_\Xi 
  h_{\gamma_n,R}(s, \sx_n(\gamma_n\lfloor u/\gamma_n\rfloor))\, \mu(ds) \, du\,
 \qquad (\forall t\geq 0) \,,
$$
where the limit is  taken along the former subsequence.
We now select an $\omega$ s.t. the above convergence holds, and omit the dependence on $\omega$ in the sequel
(otherwise stated, $\sz$ and $\sx_n$ are treated as elements of $C(\bR_+,E)$ and no longer as random variables).
Set $T>0$. As $(\sx_n)$ converges uniformly on $[0,T]$, there exists a compact set $K'$ (which depends on $\omega$) such that 
$\sx_n(\gamma_n\lfloor t/\gamma_n\rfloor)\in K'$ for all $t\in [0,T]$, $n\in \bN$.
Define
$$
v_n(s,t)\eqdef h_{\gamma_n,R}(s, \sx_n(\gamma_n\lfloor t/\gamma_n\rfloor))\,.
$$
\rev{By Eq.~\eqref{eq:moment-bis}, the sequence $(v_n,n\in \bN)$ 
forms a bounded subset of 
$\cL^{1+\epsilon_{K'}}\eqdef \cL^{1+\epsilon_{K'}}( \Xi\times [0,T], 
\mcG\otimes\mcB([0,T]), \mu\otimes\lambda_T; E)$.}
By the Banach-Alaoglu theorem, the sequence converges weakly to some mapping $v\in \cL^{1+\epsilon_{K'}}$ along some subsequence.
This has two consequences. First,
\begin{equation}
\ {\mathsf z}(t)={\mathsf z}(0)+
\int_0^t\int_\Xi v(s,u)\, \mu(ds)\, du\,,\quad(\forall t\in [0,T])\,.
\label{eq:di-int}
\end{equation}
Second, for $\mu\otimes\lambda_T$-almost all $(s,t)$, 
$v(s,t)\in H_R(s,{\mathsf z}(t))$.
In order to prove this point, remark that,  by Assumption~(RM),
$$
v_n(s,t) \to H_R(s,{\mathsf z}(t))
$$
for almost all $(s,t)$. This implies that the couple
$(\sx_n(\gamma_n\lfloor t/\gamma_n\rfloor),v_n(s,t))$
converges to $\mathrm{gr}(H_R(s, \cdot))$ and the second point thus follows from Lemma~\ref{lem:cvth}.
By  Fubini's theorem, there exists a negligible set of $[0,T]$ s.t.
for all $t$ outside this set, $v(\cdot,t)$ is an integrable selection of
$H_R(\cdot,{\mathsf z}(t))$.
As $H(\cdot,x)$ is integrable for every $x\in E$, the same holds for $H_R$.
Equation~(\ref{eq:di-int}) implies that
${\mathsf z}\in \Phi_{\sH_R}(K)\,.$
We have shown that for any sequence $((a_n,\gamma_n),n\in\bN)$  on 
$K\times (0,\gamma_0)$ s.t. $\gamma_n\to 0$,
there exists a subsequence along which, for every $\varepsilon>0$, $\bP^{a_n,\gamma_n}B_R^{-1}(d(\sX_{\gamma_n},\Phi_{\sH_R}(K))>\varepsilon)\to 0\,.$
This proves the lemma.
\end{proof}

\noindent {\bf End of the proof of Theorem~\ref{th:SA=wAPT}.}\\
We first prove the second statement.
Set an arbitrary $T>0$. Define $d_T(\sx,\sy)\eqdef \|x-y\|_{\infty,T}$.
It is sufficient to prove that for any sequence $((a_n,\gamma_n),n\in\bN)$ 
s.t.~$\gamma_n\to 0$, there exists a subsequence along which 
$\bP^{a_n,\gamma_n}(d_T(\sX_{\gamma_n},\Phi_{\sH}(K))>\varepsilon)\to 0$.
Choose $R>R_0(T)$, where $R_0(T)\eqdef\sup\{\|\sx(t)\|:t\in [0,T], \sx\in\Phi_\sH(a), a\in K\}$ is finite by
Assumption (RM).
It is easy to show that any $\sx\in\Phi_{\sH_R}(K)$ must satisfy $\|\sx \|_{\infty,T} < R$.
Thus, when $R>R_0(T)$, any $\sx\in \Phi_{\sH_R}(K)$ is such that there exists $\sy\in \Phi_{\sH}(K)$ with $d_T(\sx,\sy)=0$
\emph{i.e.}, the restrictions of $\sx$ and $\sy$ to $[0,T]$ coincide.
As a consequence of the Lemma~\ref{lem:aptTrunk}, each sequence $(a_n,\gamma_n)$ chosen as above admits 
a subsequence along which, for all $\varepsilon>0$, 
\begin{equation}
\bP^{a_n,\gamma_n}(d_T(\sX_{\gamma_n}\circ B_R,\Phi_{\sH}(K))>\varepsilon)\to 0\,.\label{eq:aptTronquee}
\end{equation}
The event $d_T(\sX_\gamma\circ B_R,\sX_\gamma)>0$ implies the event $\|\sX_\gamma\circ B_R\|_{\infty,T}\geq R$,
which in turn implies by the triangular inequality that
$
R_0(T) + d_T(\sX_\gamma\circ B_R,\Phi_{\sH}(K))\geq R\,.
$
Therefore,
\begin{equation}
\bP^{a_n,\gamma_n}( d_T(\sX_{\gamma_n}\circ B_R,\sX_{\gamma_n})>\varepsilon) \leq \bP( d_T(\sX_{\gamma_n}\circ B_R,\Phi_{\sH}(K))\geq R-R_0(T))\,.\label{eq:XTronq}
\end{equation}
By~(\ref{eq:aptTronquee}), the right hand side converges to zero. Using~\eqref{eq:aptTronquee} again along with the triangular
inequality, it follows that $\bP^{a_n,\gamma_n}(d_T(\sX_{\gamma_n},\Phi_{\sH}(K))>\varepsilon)\to 0$, which proves the second statement of the theorem.

We prove the first statement (tightness). By \cite[Theorem 7.3]{bil-(livre)99},
this is equivalent to showing that for every $T>0$, and for every sequence 
$(a_n,\gamma_n)$ on $K\times (0,\gamma_0)$, the sequence 
$( \bP^{a_n,\gamma_n} \sX_{\gamma_n}^{-1} p_0^{-1} )$ is tight, and 
for each positive $\varepsilon$ and $\eta$, there exists $\delta > 0$ such
that $\lim\sup_n 
\bP^{a_n,\gamma_n}\sX_{\gamma_n}^{-1}(\{x:w_x^T(\delta)>\varepsilon\}) < \eta$. 

First consider the case where $\gamma_n\to 0$. Fixing $T > 0$, letting 
$R>R_0(T)$ and using~\eqref{eq:XTronq}, it holds that for all $\varepsilon>0$, 
$\bP^{a_n,\gamma_n}( d_T(\sX_{\gamma_n}\circ B_R,\sX_{\gamma_n})>\varepsilon)
\to_n 0$. Moreover, we showed that 
$\bP^{a_n,\gamma_n}B_R^{-1}\sX_{\gamma_n}^{-1}$ is tight. 
The tightness of $( \bP^{a_n,\gamma_n} \sX_{\gamma_n}^{-1} p_0^{-1} )$ follows.
In addition, for every $\sx, \sy \in C(\bR_+,E)$, it holds by the triangle 
inequality that $w_{\sx}^T(\delta) \leq w_{\sy}^T(\delta) + 2 d_T(\sx, \sy)$
for every $\delta > 0$. Thus, 
\begin{multline*} 
\bP^{a_n,\gamma_n}\sX_{\gamma_n}^{-1}(\{x:w_x^T(\delta)>\varepsilon\})
\leq 
\bP^{a_n,\gamma_n}B_R^{-1}\sX_{\gamma_n}^{-1}
                               (\{x:w_x^T(\delta)>\varepsilon /2\}) \\ 
+ \bP^{a_n,\gamma_n}( d_T(\sX_{\gamma_n}\circ B_R,\sX_{\gamma_n})
  >\varepsilon / 4) , 
\end{multline*} 
which leads to the tightness of $(\bP^{a_n,\gamma_n} \sX_{\gamma_n}^{-1})$ 
when $\gamma_n \to 0$. 

It remains to establish the tightness when $\liminf_n\gamma_n>\eta>0$ for some
$\eta>0$. Note that for all $\gamma>\eta$,
$$
w_{\sX_{\gamma}^T(x)}(\delta)\leq 
 2\delta \max_{k=0\dots\lfloor T/\eta\rfloor+1}\|x_k\|\,.
$$
There exist $n_0$ such that for all $n\geq n_0$, $\gamma_n>\eta$ which implies 
by the union bound:
$$
\bP^{a_n,\gamma_n}\sX_{\gamma_n}^{-1}(\{\sx:w_\sx^T(\delta)>\varepsilon\}) \leq \sum_{k=0}^{\lfloor T/\eta\rfloor+1}P_\gamma^k(a,B(0,(2\delta)^{-1}\varepsilon)^c)\,,
$$
where $B(0,r)\subset E$ stands for the ball or radius $r$ and
where $P_\gamma^k$ stands for the iterated kernel, recursively defined by
\begin{equation}
P_\gamma^k(a,\cdot) = \int P_\gamma(a,dy)P_\gamma^{k-1}(y,\cdot)\label{eq:iterated}
\end{equation}
and $P_\gamma^0(a,\cdot)=\delta_a$. Using~\eqref{eq:moment}, it is an easy 
exercise to show, by induction, that for every $k\in \bN$, 
$P_\gamma^k(a,B(0,r)^c)\to 0$ as $r\to \infty$. By letting $\delta\to 0$ in the 
above inequality, the tightness of $(\bP^{a_n,\gamma_n}\sX_{\gamma_n}^{-1})$ 
follows.

\section{Proof of Proposition~\ref{prop:cluster}}
\label{sec-prf-cluster} 
To establish Prop.~\ref{prop:cluster}--\ref{cluster-Idroit}, we consider a 
sequence $((\pi_n,\gamma_n), n\in \bN)$ such that 
$\pi_n \in \cI(P_{\gamma_n})$, $\gamma_n \to 0$, and $(\pi_n)$ is tight. We 
first show that the sequence 
$(\upsilon_n \eqdef \bP^{\pi_n,\gamma_n} \sX_{\gamma_n}^{-1}, n\in \bN)$ is 
tight, then we show that every cluster point of $(\upsilon_n)$ satisfies 
the conditions of Def.~\ref{def-inv}. 

Given $\varepsilon > 0$, there exists a compact set $K\subset E$ such 
that $\inf_n \pi_n(K) > 1 - \varepsilon / 2$. By Th.~\ref{th:SA=wAPT}, 
the family 
$\{ \bP^{a,\gamma_n} \sX_{\gamma_n}^{-1} \, , \, a \in K, n \in \bN \}$ is
tight. Let $\cC$ be a compact set of $C(\bR_+, E)$ such that 
$\inf_{a\in K, n\in\bN} \bP^{a,\gamma_n} \sX_{\gamma_n}^{-1}(\cC) > 
1 - \varepsilon/2$. 
By construction of the probability measure $\bP^{\pi_n, \gamma_n}$, it 
holds that 
$\bP^{\pi_n, \gamma_n}(\cdot) = \int_E \bP^{a, \gamma_n}(\cdot) \, \pi_n(da)$. 
Thus, 
\[
\upsilon_n(\cC) 
\geq \int_K \bP^{a, \gamma_n}\sX_{\gamma_n}^{-1}(\cC) \, \pi_n(da) 
 > (1 - \varepsilon/2)^2 > 1 - \varepsilon \, , 
\]
which shows that $(\upsilon_n)$ is tight. 

Since $\pi_n = \upsilon_n p_0^{-1}$, and since the projection $p_0$ is
continuous, it is clear that every cluster point $\pi$ of $\cI(\cP)$ as 
$\gamma\to 0$ can be written as $\pi = \upsilon p_0^{-1}$, where $\upsilon$ is 
a cluster point of a sequence $(\upsilon_n)$. Thus,   
Def.~\ref{def-inv}--\ref{inv-margin} is satisfied by $\pi$ and $\upsilon$. 
To establish Prop.~\ref{prop:cluster}--\ref{cluster-Idroit}, we need to verify
the conditions~\ref{inv-support} and~\ref{inv-Theta} of 
Definition~\ref{def-inv}. In the remainder of the proof, we denote with a 
small abuse as $(n)$ a subsequence along which $(\upsilon_n)$ converges 
narrowly to $\upsilon$. 

To establish the validity of 
Def.~\ref{def-inv}--\ref{inv-support}, we prove that for every $\eta > 0$, 
$\upsilon_n( (\Phi_\sH(E))_\eta ) \to 1$ as $n\to\infty$; the result will
follow from the convergence of $(\upsilon_n)$.  Fix $\varepsilon > 0$, and let
$K \subset E$ be a compact set such that 
$\inf_n \pi_n(K) > 1 - \varepsilon$.  We have 
\begin{align*}
\upsilon_n( (\Phi_\sH(E))_\eta ) &= 
\bP^{\pi_n,\gamma_n}( d(\sX_{\gamma_n}, \Phi_{\sH}(E)) < \eta ) \\ 
&\geq \bP^{\pi_n,\gamma_n}( d(\sX_{\gamma_n}, \Phi_{\sH}(K)) < \eta ) \\
&\geq \int_K \bP^{a,\gamma_n}( d(\sX_{\gamma_n}, \Phi_{\sH}(K)) < \eta ) 
    \, \pi_n(da)  \\ 
&\geq (1-\varepsilon) 
\inf_{a\in K} \bP^{a,\gamma_n}( d(\sX_{\gamma_n}, \Phi_{\sH}(K)) < \eta ) 
\, .
\end{align*} 
By Th.~\ref{th:SA=wAPT}, the infimum at the right hand side converges to $1$.
Since $\varepsilon > 0$ is arbitrary, we obtain the result. 

It remains to establish the $\Theta$-invariance of $\upsilon$ 
(Condition~\ref{inv-Theta}). Equivalently, we need to show that 
\begin{equation} 
\int f(\sx) \, \upsilon(d\sx) = \int f \circ \Theta_t (\sx) \, 
\upsilon(d\sx) 
\label{inv-ups} 
\end{equation} 
for all $f\in C_b(C(\bR_+, E))$ and all $t > 0$.
We shall work on $(\upsilon_n)$ and make $n\to\infty$. Write 
$\eta_n \eqdef t - \gamma_n \lfloor t / \gamma_n \rfloor$. 
Thanks to the $P_{\gamma_n}$--invariance of $\pi_n$,
$\Theta_{\eta_n}(\sx(\gamma_n \lfloor t / \gamma_n \rfloor+\cdot))$ and
$\Theta_{t} (\sx)$ are equal in law under $\upsilon_n(d\sx)$. Thus, 
\begin{align} 
\int f \circ \Theta_t (\sx) \, \upsilon_n(d\sx) &= 
\int 
f \circ \Theta_{\eta_n} (\sx(\gamma_n \lfloor t / \gamma_n \rfloor+\cdot)) 
  \, \upsilon_n(d\sx) \nonumber \\  
&= \int f \circ \Theta_{\eta_n} (\sx) \, \upsilon_n(d\sx) . \label{eq:commentonlappelle}
\end{align} 
Using Skorokhod's representation theorem, there exists a probability
space $(\Omega',\mathcal F', \bP')$ and random variables
$(\bar \sx_n, n\in \bN)$ and $\bar \sx$ over this probability space, with
values in $C(\bR_+, E)$, such that for every $n \in \bN$, the
distribution of $\bar \sx_n$ is $\upsilon_n$, the
distribution of $\bar \sx$ is $\upsilon$ and
$\bP'$-a.s, 
\[
d(\bar \sx_n,\bar \sx) \longrightarrow_{n \to +\infty} 0,
\]
\textit{i.e}, $(\bar\sx_n)$ converges to $\bar \sx$ as
$n \to +\infty$ uniformly over compact sets of $\bR_+$. Since
$\eta_n \rightarrow_{n \to +\infty} 0$, $\bP'$-a.s,
$d(\Theta_{\eta_n}(\bar\sx_n),\bar\sx) \longrightarrow_{n \to
  +\infty} 0.$ Hence,
\[
\int f \circ \Theta_{\eta_n} (\sx) \, \upsilon_n(d\sx)
\xrightarrow[n\to\infty]{} \int f(\sx) \, \upsilon(d\sx) \,.
\]
Recalling Eq.~(\ref{eq:commentonlappelle}), we have shown that $\int f\circ \Theta_t (\sx) \, \upsilon_n(d\sx) \xrightarrow[n\to\infty]{} 
\int f\circ \Theta_t(\sx) \, \upsilon(d\sx)$. Since $\int f(\sx) \, \upsilon_n(d\sx) \xrightarrow[n\to\infty]{} 
\int f(\sx) \, \upsilon(d\sx)$, the identity \eqref{inv-ups} holds true. 
Prop.~\ref{prop:cluster}--\ref{cluster-Idroit} is proven. 

We now prove Prop.~\ref{prop:cluster}--\ref{cluster-Icursif}. Consider a 
sequence $((\mathfrak m_n, \gamma_n), n\in \bN)$ such that 
$\mathfrak m_n \in \mcI(P_{\gamma_n})$, $\gamma_n \to 0$, and 
$\mathfrak m_n \Rightarrow \mathfrak m$ for some $\mathfrak m \in\cM(\cM(E))$. 
Since the space $\cM(E)$ is separable, Skorokhod's representation theorem shows
that there exists a probability space $(\Omega',\mcF',\bP')$, a 
sequence of $\Omega' \to \cM(E)$ random variables $(\Lambda_n)$ with 
distributions $\mathfrak m_n$, and a $\Omega' \to \cM(E)$ random variable 
$\Lambda$ with distribution $\mathfrak m$ such that $\Lambda_n(\omega) 
\Rightarrow \Lambda(\omega)$ for each $\omega \in \Omega'$. Moreover, there is a 
probability one subset of $\Omega'$ such that $\Lambda_n(\omega)$ is a 
$P_{\gamma_n}$--invariant probability measure for all $n$ and for every 
$\omega$ in this set. For each of these $\omega$, we can construct on the 
space $(E^\bN, \mcF)$ a probability measure 
$\bP^{\Lambda_n(\omega), \gamma_n}$ as we did in Sec.~\ref{rm-general}. By the 
same argument as in the proof of 
Prop.~\ref{prop:cluster}--\ref{cluster-Idroit}, the sequence 
$(\bP^{\Lambda_n(\omega), \gamma_n} \sX_{\gamma_n}^{-1}, n\in\bN)$ is tight, 
and any cluster point $\upsilon$ satisfies the conditions of 
Def.~\ref{def-inv} with $\Lambda(\omega) = \upsilon p_0^{-1}$. 
Prop.~\ref{prop:cluster} is proven. 

\section{Proof of Theorem~\ref{the:CV}}
\label{sec-prf-CV} 

\subsection{Technical lemmas}
\label{sec:technical-lemmas}

\begin{lemma}
  \label{lem:cpctLevy}
Given a family $\{ K_j , j\in\bN \}$ of compact sets of $E$, the
set
\[
U\eqdef\{ \nu \in \cM(E) \, : \, \forall j \in \bN, 
\nu(K_j) \geq 1 - 2^{-j} \}
\]
is a compact set of $\cM(E)$.
\end{lemma}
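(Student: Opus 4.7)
The claim is that $U$ is compact in the Polish space $\cM(E)$ endowed with the narrow topology. My plan is to invoke Prokhorov's theorem, which (since $E$ is Polish) reduces compactness to the combination of tightness and closedness.

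\emph{Tightness.} Given $\varepsilon > 0$, I would pick $j_0 \in \bN$ large enough that $2^{-j_0} < \varepsilon$. Then $K_{j_0}$ is by hypothesis a compact subset of $E$, and by the very definition of $U$, every $\nu \in U$ satisfies $\nu(K_{j_0}) \geq 1 - 2^{-j_0} > 1 - \varepsilon$. Since $\varepsilon$ was arbitrary, $U$ is tight; by Prokhorov's theorem its closure in $\cM(E)$ is compact.

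\emph{Closedness.} Suppose $(\nu_n)$ is a sequence in $U$ with $\nu_n \Rightarrow \nu \in \cM(E)$. For each fixed $j \in \bN$, the set $K_j$ is compact, hence closed. The Portmanteau theorem then yields $\limsup_n \nu_n(K_j) \leq \nu(K_j)$. Since $\nu_n(K_j) \geq 1 - 2^{-j}$ for every $n$, we have $\limsup_n \nu_n(K_j) \geq 1 - 2^{-j}$, and combining the two inequalities gives $\nu(K_j) \geq 1 - 2^{-j}$. As this holds for every $j$, we conclude $\nu \in U$, so $U$ is closed.

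Combining the two steps, $U$ is a closed subset of a relatively compact set of $\cM(E)$, hence itself compact. There is no real obstacle here: the argument is a textbook application of Prokhorov's theorem and the Portmanteau characterization of narrow convergence, and the only point to be careful about is using the closed-set form of Portmanteau (giving the correct inequality to preserve the lower bound on $\nu(K_j)$).
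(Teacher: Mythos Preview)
Your proof is correct and follows exactly the same approach as the paper: tightness of $U$ (hence relative compactness via Prokhorov) combined with closedness via the Portmanteau inequality for closed sets. The only difference is that you spell out the choice of $j_0$ for tightness, whereas the paper simply asserts it.
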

\begin{proof}
The set $U$ is tight hence relatively compact by Prokhorov's theorem. It is moreover
closed. Indeed, let $(\nu_n, n\in\bN)$ represent a sequence of $U$ s.t. 
$\nu_n\Rightarrow \nu$. Then, for all $j\in \bN$, 
$\nu(K_j) \geq \limsup_n \nu_n(K_j) \geq 1 - 2^{-j}$ since
$K_j$ is closed. 
\end{proof}
\begin{lemma}
\label{lem:01}
Let $X$ be a real random variable such that $X \leq 1$ with probability one,
and $\bE X \geq 1 - \varepsilon$ for some $\varepsilon \geq 0$. Then
$\bP[ X \geq 1 - \sqrt{\varepsilon}] \geq 1 - \sqrt{\varepsilon}$. 
\end{lemma}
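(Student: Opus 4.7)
The plan is to reduce this to a direct application of Markov's inequality after the standard substitution that turns the bounded-above random variable into a nonnegative one.

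Concretely, I would set $Y \eqdef 1 - X$. Since $X \leq 1$ almost surely, we have $Y \geq 0$ almost surely, and the hypothesis $\bE X \geq 1 - \varepsilon$ rewrites as $\bE Y \leq \varepsilon$. Then I apply Markov's inequality to the nonnegative random variable $Y$ with threshold $\sqrt{\varepsilon}$ (assuming $\varepsilon > 0$; the case $\varepsilon = 0$ is handled separately as noted below), which yields
\begin{equation*}
\bP[\, Y \geq \sqrt{\varepsilon}\,] \;\leq\; \frac{\bE Y}{\sqrt{\varepsilon}} \;\leq\; \frac{\varepsilon}{\sqrt{\varepsilon}} \;=\; \sqrt{\varepsilon}.
\end{equation*}
Translating back in terms of $X$, this says $\bP[\,X \leq 1 - \sqrt{\varepsilon}\,] \leq \sqrt{\varepsilon}$, and therefore $\bP[\,X > 1 - \sqrt{\varepsilon}\,] \geq 1 - \sqrt{\varepsilon}$, which a fortiori gives the stated inequality $\bP[\,X \geq 1 - \sqrt{\varepsilon}\,] \geq 1 - \sqrt{\varepsilon}$.

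For the boundary case $\varepsilon = 0$, one observes directly that $Y \geq 0$ almost surely combined with $\bE Y \leq 0$ forces $Y = 0$ almost surely, so $X = 1$ almost surely and the conclusion is trivial. There is no genuine obstacle in this proof; the only point requiring a moment's care is the $\varepsilon = 0$ case, which is why I separate it out. The whole argument is two or three lines.
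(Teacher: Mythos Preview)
Your proof is correct and is essentially the same argument as the paper's: the paper splits $\bE X$ on the event $\{X \geq 1-\sqrt{\varepsilon}\}$ and its complement and bounds each piece, which is precisely the computation underlying Markov's inequality applied to $1-X$. Your version is marginally tidier in that it names the inequality and separates the $\varepsilon=0$ case explicitly, but there is no substantive difference.
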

\begin{proof}
$1-\varepsilon \leq \bE X \leq \bE X \1_{X < 1-\sqrt{\varepsilon}} 
+ \bE X \1_{X \geq 1-\sqrt{\varepsilon}} \leq 
(1-\sqrt{\varepsilon})(1-\bP[X\geq 1 - \sqrt{\varepsilon}]) + 
\bP[X\geq 1 - \sqrt{\varepsilon}]$. The result is obtained by rearranging. 
\end{proof}

For any $\mathfrak{m}\in\cM(\cM(E))$, we denote by $e(\mathfrak{m})$ the probability measure in $\cM(E)$ such that
for every $f\in C_b(E)$,
$$
e(\mathfrak{m}) :f\mapsto \int\nu(f)\mathfrak{m}(d\nu)\,.
$$
Otherwise stated, $e(\mathfrak{m})(f) = \mathfrak{m}(\cT_f)$ where $\cT_f:\nu\mapsto \nu(f)$. 
\begin{lemma}
\label{lem:espTight}
  Let $\cL$ be a family on $\cM(\cM(E))$. If
$\{e(\mathfrak m)\,:\,\mathfrak m\in\cL\}$ is tight, then $\cL$ is tight.
\end{lemma}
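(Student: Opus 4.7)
The plan is to invoke Prokhorov's theorem together with Lemma~\ref{lem:cpctLevy}: it suffices to show that for every $\varepsilon>0$ one can exhibit a compact set $U\subset\cM(E)$ of the specific form $U=\{\nu:\nu(K_j)\geq 1-2^{-j}\ \forall j\}$ such that $\mathfrak m(U)\geq 1-\varepsilon$ for all $\mathfrak m\in\cL$. The bridge between the hypothesis (tightness of the barycenters $e(\mathfrak m)$) and the desired conclusion will be a routine Markov-type inequality applied coordinate-by-coordinate in $j$, with the deficits chosen summable.

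Fix $\varepsilon>0$. For each $j\in\bN$, I would use the tightness of $\{e(\mathfrak m):\mathfrak m\in\cL\}$ to select a compact set $K_j\subset E$ with $e(\mathfrak m)(K_j^c)\leq\delta_j$ for every $\mathfrak m\in\cL$, where the sequence $(\delta_j)$ is to be tuned. Expanding $e(\mathfrak m)(K_j^c)=\int(1-\nu(K_j))\,\mathfrak m(d\nu)$ and applying Markov's inequality (or equivalently Lemma~\ref{lem:01}) to the $[0,1]$-valued random variable $\nu\mapsto 1-\nu(K_j)$ under $\mathfrak m$ gives
\[
\mathfrak m\bigl(\{\nu:\nu(K_j)<1-2^{-j}\}\bigr)\leq 2^{j}\delta_j.
\]
Choosing $\delta_j\eqdef \varepsilon\,2^{-2j-1}$ makes the right-hand side equal to $\varepsilon\,2^{-j-1}$.

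With this choice, define $U$ as in Lemma~\ref{lem:cpctLevy} from the family $(K_j)_{j\in\bN}$. That lemma immediately provides compactness of $U$ in $\cM(E)$. A union bound over $j$ yields
\[
\mathfrak m(U^c)\leq \sum_{j\in\bN}\mathfrak m\bigl(\{\nu:\nu(K_j)<1-2^{-j}\}\bigr)\leq \sum_{j\in\bN}\varepsilon\,2^{-j-1}=\varepsilon,
\]
uniformly over $\mathfrak m\in\cL$, hence $\mathfrak m(U)\geq 1-\varepsilon$. Since $\varepsilon>0$ was arbitrary, $\cL$ is tight in $\cM(\cM(E))$, and another application of Prokhorov's theorem (on the Polish space $\cM(E)$) gives relative compactness as a bonus.

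There is no real obstacle here; the argument is essentially a two-level tightness transfer. The only point requiring care is ensuring that the sequence of deficits $(\delta_j)$ decays fast enough that, after paying the factor $2^j$ from Markov, the residual $2^j\delta_j$ is still summable with sum $\leq\varepsilon$—which forces $\delta_j$ to be of order $4^{-j}$ rather than $2^{-j}$.
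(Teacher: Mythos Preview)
Your proof is correct and follows essentially the same route as the paper: choose compacts $K_j$ from the tightness of the barycenters, define $U$ via Lemma~\ref{lem:cpctLevy}, control $\mathfrak m\{\nu:\nu(K_j)<1-2^{-j}\}$ by a Markov-type bound, and finish with a union bound. The only cosmetic differences are that the paper invokes Lemma~\ref{lem:01} (yielding the $\sqrt{\varepsilon}$ form) rather than Markov's inequality directly, and absorbs the $\varepsilon$ by truncating the series at an index $k$ with $2^{-k+1}\leq\varepsilon$ instead of folding $\varepsilon$ into the choice of $\delta_j$ as you do.
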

\begin{proof}
Let $\varepsilon>0$ and choose any integer $k$ s.t. $2^{-k+1}\leq\varepsilon$.
For all $j\in\bN$, choose a compact set $K_j\subset E$ s.t. for all $\mathfrak m\in\cL$,
$e(\mathfrak m)(K_j) > 1-2^{-2j}\,.$ Define $U$ as the set of measures $\nu\in\cM(E)$
s.t. for all $j\geq k$, $\nu(K_j)\geq 1-2^{-j}$. By Lemma~\ref{lem:cpctLevy}, $U$ is compact.
For all $\mathfrak m\in\cL$, the union bound implies that 
\begin{align*}
  \mathfrak m(E\backslash U) &\leq \sum_{j=k}^\infty \mathfrak m\{\nu:\nu(K_j)<1-2^{-j}\}
\end{align*}
By Lemma~\ref{lem:01}, $\mathfrak m\{\nu:\nu(K_j)\geq 1-2^{-j}\}\geq  1- 2^{-j}$. Therefore,
$
\mathfrak m(E\backslash U) \leq \sum_{j=k}^\infty 2^{-j} = 2^{-k+1}\leq \varepsilon\,.
$
This proves that $\cL$ is tight.
\end{proof}

\begin{lemma} 
\label{lem:CVe}
Let $(\mathfrak m_n,n\in \bN)$ be a sequence on $\cM(\cM(E))$,
and consider $\bar{\mathfrak{m}}\in \cM(\cM(E))$.  
If $\mathfrak{m}_n\Rightarrow \bar{\mathfrak m}$, then
  $e(\mathfrak{m}_n)\Rightarrow e(\bar{\mathfrak m})$.
\end{lemma}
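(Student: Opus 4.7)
The plan is to unfold the definitions and reduce the claim to the observation that, for every $f \in C_b(E)$, the evaluation map $\cT_f : \cM(E) \to \bR$ defined by $\cT_f(\nu) = \nu(f)$ belongs to $C_b(\cM(E))$.

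First, I would fix an arbitrary $f \in C_b(E)$ and verify that $\cT_f$ is continuous and bounded on $\cM(E)$. Boundedness is immediate: $|\cT_f(\nu)| = |\nu(f)| \leq \|f\|_\infty$ for every probability measure $\nu$. Continuity is exactly the definition of narrow convergence on $\cM(E)$: if $\nu_k \Rightarrow \nu$, then by definition $\nu_k(f) \to \nu(f)$, i.e.\ $\cT_f(\nu_k) \to \cT_f(\nu)$. Hence $\cT_f \in C_b(\cM(E))$.

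Second, rewriting $e(\mathfrak{m}_n)(f) = \mathfrak{m}_n(\cT_f)$ and similarly for $\bar{\mathfrak m}$, the assumption $\mathfrak{m}_n \Rightarrow \bar{\mathfrak m}$ applied to the test function $\cT_f \in C_b(\cM(E))$ yields
\[
e(\mathfrak{m}_n)(f) = \mathfrak{m}_n(\cT_f) \xrightarrow[n\to\infty]{} \bar{\mathfrak m}(\cT_f) = e(\bar{\mathfrak m})(f).
\]
Since $f \in C_b(E)$ was arbitrary, this is precisely the statement $e(\mathfrak{m}_n) \Rightarrow e(\bar{\mathfrak m})$.

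There is essentially no obstacle: the argument is a one-line unpacking once one notices that evaluation against a fixed $C_b$-function is a bounded continuous functional on $\cM(E)$. The only point to mention for rigor is that $E = \bR^N$ is Polish, so that narrow convergence on $\cM(E)$ is metrized by the L\'evy--Prokhorov distance and the target space $\cM(\cM(E))$ is well defined in the usual sense; nothing deeper is needed.
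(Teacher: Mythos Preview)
Your proof is correct and follows exactly the same approach as the paper's own proof, which simply states that for any $f\in C_b(E)$, $\cT_f\in C_b(\cM(E))$, hence $\mathfrak{m}_n(\cT_f)\to \bar{\mathfrak m}(\cT_f)$. You have merely spelled out the boundedness and continuity of $\cT_f$ in more detail.
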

\begin{proof}
  For any $f\in C_b(E)$, $\cT_f\in C_b(\cM(E))$. Thus,
$\mathfrak{m}_n(\cT_f)\to \bar{\mathfrak m}(\cT_f)$.
\end{proof}

When a sequence $(\mm_n, n\in\bN)$ of $\cM(\cM(E))$ converges narrowly to
$\mm\in \cM(\cM(E))$, it follows from the above proof that $\mm_n\cT_f^{-1}
\Rightarrow \mm\cT_f^{-1}$ for all bounded continuous $f$. The purpose of the
next lemma is to extend this result to the case where $f$ is not necessarily
bounded, but instead, satisfies some uniform integrability condition.  For any
vector-valued function $f$, we use the notation  $\|f\|\eqdef\|f(\cdot)\|$.

\begin{lemma}
  \label{lem:UIf}
Let $f\in C(E,\bR^{N'})$ where $N'\geq 1$ is an integer.
Define by $\cT_f:\cM(E)\to\bR$ the mapping s.t. $\cT_f(\nu) \eqdef \nu(f)$ if 
$\nu(\|f\|)<\infty$ and equal to zero otherwise.
Let $(\mm_n,n\in \bN)$ be a sequence on $\cM(\cM(E))$ and let $\mm\in\cM(\cM(E))$. Assume that $\mm_n\Rightarrow \mm$
and 
\begin{equation}
\lim_{K\to\infty}\sup_n e(\mm_n)(\|f\|\1_{\|f\|>K})=0\,.\label{eq:UI1}
\end{equation}
Then, $\nu(\|f\|)<\infty$ for all $\nu$ $\mm$-a.e. and $\mm_n\cT_f^{-1}\Rightarrow \mm\cT_f^{-1}$.
\end{lemma}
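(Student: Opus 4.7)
My plan is to first establish that $\nu(\|f\|)<\infty$ for $\mm$-almost every $\nu$, then to prove the narrow convergence $\mm_n\cT_f^{-1}\Rightarrow \mm\cT_f^{-1}$ by a truncation argument on $f$, using~\eqref{eq:UI1} to control the truncation error uniformly in $n$. For the finiteness claim, I would observe that $G:\nu\mapsto \nu(\|f\|)$, valued in $[0,+\infty]$, is lower semicontinuous on $\cM(E)$ for the narrow topology, being the countable supremum of the bounded continuous functionals $\nu\mapsto \nu(\|f\|\wedge K)$ for $K\in\bN$. Hence $\mm\mapsto e(\mm)(\|f\|)=\int G\,d\mm$ is also lower semicontinuous on $\cM(\cM(E))$. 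The Portmanteau theorem applied to $\mm_n\Rightarrow \mm$ yields $e(\mm)(\|f\|)\leq \liminf_n e(\mm_n)(\|f\|)$, and the right-hand side is finite thanks to~\eqref{eq:UI1}. Thus $\nu(\|f\|)<\infty$ for $\mm$-a.e.\ $\nu$; the same holds $\mm_n$-a.e., because~\eqref{eq:UI1} directly forces $e(\mm_n)(\|f\|)<\infty$ for each $n$. In particular, $\cT_f$ agrees with $\nu\mapsto\nu(f)$ on sets of full $\mm$- and $\mm_n$-measure.

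For the narrow convergence, by Portmanteau it suffices to prove $\mm_n(\varphi\circ\cT_f)\to\mm(\varphi\circ\cT_f)$ for every bounded Lipschitz $\varphi:\bR^{N'}\to\bR$, with Lipschitz constant $L_\varphi$. Fix such a $\varphi$, introduce a continuous cutoff $\chi_K:\bR_+\to[0,1]$ with $\chi_K\equiv 1$ on $[0,K]$ and $\chi_K\equiv 0$ on $[K+1,+\infty)$, and set $f_K(x)\eqdef \chi_K(\|f(x)\|)\,f(x)$. Then $f_K\in C_b(E,\bR^{N'})$, so $\cT_{f_K}:\nu\mapsto \nu(f_K)$ is continuous and bounded on $\cM(E)$ by the very definition of narrow convergence on $\cM(E)$, and in particular $\varphi\circ\cT_{f_K}\in C_b(\cM(E))$. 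The triangle inequality then gives
\[
|\mm_n(\varphi\circ\cT_f)-\mm(\varphi\circ\cT_f)|
\leq A_n(K)+B_n(K)+C(K),
\]
with $A_n(K)\eqdef|\mm_n(\varphi\circ\cT_f)-\mm_n(\varphi\circ\cT_{f_K})|$, $B_n(K)\eqdef|\mm_n(\varphi\circ\cT_{f_K})-\mm(\varphi\circ\cT_{f_K})|$, and $C(K)\eqdef|\mm(\varphi\circ\cT_{f_K})-\mm(\varphi\circ\cT_f)|$.

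Since $\|f-f_K\|\leq \|f\|\,\1_{\|f\|>K}$, the Lipschitz bound on $\varphi$ gives $A_n(K)\leq L_\varphi\,e(\mm_n)(\|f\|\1_{\|f\|>K})$ and $C(K)\leq L_\varphi\,e(\mm)(\|f\|\1_{\|f\|>K})$. The former vanishes as $K\to\infty$ \emph{uniformly in $n$} thanks to~\eqref{eq:UI1}, and the latter vanishes by dominated convergence, $\|f\|$ being $e(\mm)$-integrable by the first step. For each fixed $K$, $B_n(K)\to 0$ as $n\to\infty$ because $\mm_n\Rightarrow \mm$ and $\varphi\circ\cT_{f_K}\in C_b(\cM(E))$. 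A standard $\varepsilon/3$ argument concludes. The substantive obstacle here is the unboundedness of $f$, which makes $\cT_f$ neither continuous nor everywhere defined on $\cM(E)$; hypothesis~\eqref{eq:UI1} is exactly what makes the uniform-in-$n$ truncation work and reduces the problem to applying the narrow convergence $\mm_n\Rightarrow\mm$ against a bounded continuous functional.
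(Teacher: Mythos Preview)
Your proof is correct and follows essentially the same approach as the paper: truncate $f$ to a bounded continuous $f_K$, test against bounded Lipschitz functions, control the truncation error via~\eqref{eq:UI1}, and pass to the limit. The only cosmetic differences are that the paper uses the radial truncation $f_K(x)=f(x)(1\wedge K/\|f(x)\|)$ and organizes the $\varepsilon$-argument slightly differently, and that you spell out the lower-semicontinuity argument for $e(\mm)(\|f\|)<\infty$ whereas the paper simply asserts it.
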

\begin{proof}
By Eq.~(\ref{eq:UI1}),  $e(\mm)(\|f\|)<\infty$.
This implies that for all $\nu$ $\mm$-a.e., $\nu(\|f\|)<\infty$.
Choose $h\in C_b(\bR^{N'})$ s.t. $h$ is $L$-Lipschitz continuous. 
We must prove that $\mm_n\cT_f^{-1}(h)\to \mm\cT_f^{-1}(h)$.
By the above remark, $\mm\cT_f^{-1}(h) = \int h(\nu(f))d\mm(\nu)$, and by Eq~(\ref{eq:UI1}),
$\mm_n\cT_f^{-1}(h) = \int h(\nu(f))d\mm_n(\nu)$. Choose $\varepsilon>0$. 
By Eq.~(\ref{eq:UI1}), there exists $K_0>0$ s.t. for all $K>K_0$, 
$\sup_ne(\mm_n)(\|f\|\1_{\|f\|>K})<\varepsilon$.
For every such $K$, define the bounded function $f_K\in C( E,\bR^{N'})$ by $f_K(x) = f(x) (1\wedge K/\|f(x)\|)$. 
For all $K>K_0$, and for all $n\in \bN$,
\begin{align*}
|\mm_n\cT_f^{-1}(h) - \mm_n\cT_{f_K}^{-1}(h)| & \leq  \int |h(\nu(f))-h(\nu(f_K))|d\mm_n(\nu)\\
&\leq L\,\int \nu(\|f-f_K\|)d\mm_n(\nu)\\
&\leq L\,\int \nu(\|f\|\1_{\|f\|>K})d\mm_n(\nu)\leq L\varepsilon\,.
\end{align*}
By continuity of $\cT_{f_K}$, it holds that $ \mm_n\cT_{f_K}^{-1}(h)\to \mm\cT_{f_K}^{-1}(h)$.
Therefore, for every $K>K_0$, $\limsup_n  |\mm_n\cT_f^{-1}(h) - \mm\cT_{f_K}^{-1}(h)| 
\leq  L\varepsilon\,.$
As $\nu(\|f\|)<\infty$ for all $\nu$ $\mm$-a.e., the dominated convergence theorem implies that $\nu(f_K)\to\nu(f)$ as $K\to\infty$,
$\mm$-a.e. As $h$ is bounded and continuous, a second application of the dominated convergence theorem
implies that $\int h(\nu(f_K))d\mm(\nu)\to\int h(\nu(f))d\mm(\nu)$, which reads $ \mm\cT_{f_K}^{-1}(h)\to  \mm\cT_{f}^{-1}(h)$.
Thus, $\limsup_n  |\mm_n\cT_{f}^{-1}(h) -  \mm\cT_{f}^{-1}(h)| \leq L\varepsilon\,.$
As a consequence, $\mm_n\cT_{f}^{-1}(h) \to \mm\cT_{f}^{-1}(h)$ as $n\to\infty$, which completes the proof.
\end{proof}

\subsection{Narrow Cluster Points of the Empirical Measures}

Let $P:E\times \mcB(E)\to [0,1]$ be a probability transition kernel. 
For $\nu\in \cM(E)$, we denote by 
 $\bP^{\nu,P}$ the probability on $(\Omega,\mcF)$
such that $X$ is an homogeneous Markov chain with initial distribution $\nu$ and transition kernel $P$.

For every $n\in\bN$, we define the measurable mapping $\Lambda_n:\Omega\to\cM(E)$ as
\begin{equation}
\Lambda_n(x) \eqdef \frac 1{n+1}\sum_{k=0}^n\delta_{x_k}\label{eq:Lambdan}
\end{equation}
for all $x=(x_k:k\in\bN)$. Note that
$$
\bE^{\nu,P}\Lambda_n = \frac 1{n+1} \sum_{k=0}^{n}\nu P^k\,,
$$
where $\bE^{\nu,P}\Lambda_n = e(\bP^{\nu,P}\Lambda_n^{-1})$, and $P^k$ stands for the iterated kernel, recursively defined by
$
P^k(x,\cdot) = \int P(x,dy)P^{k-1}(y,\cdot)
$ and $P^0(x,\cdot)=\delta_x$.

We recall that $\mcI(P)$ represents the subset of $\cM(\cM(E))$ formed by the measures
whose support is included in $\cI(P)$.
\begin{proposition}
\label{prop:feller}
Let $P:E\times \mcB(E)\to [0,1]$ be a Feller probability transition kernel. Let $\nu\in\cM(E)$. 
\begin{enumerate}
\item Any cluster point of $\{\bE^{\nu,P}\Lambda_n \,,\,n\in\bN \}$ is an element of 
$\cI(P)$.
\item Any cluster point of 
$\{\bP^{\nu,P}\Lambda_n^{-1}\,,\,n\in\bN \}$ is an element of $\mcI(P)$.
\end{enumerate}
\end{proposition}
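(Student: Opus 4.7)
My plan for Proposition \ref{prop:feller} rests on a telescoping identity for part (1) and a martingale decomposition for part (2), both exploiting the Feller continuity of $P$.

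For part (1), I would use the explicit formula $\bE^{\nu,P}\Lambda_n=\frac{1}{n+1}\sum_{k=0}^n\nu P^k$. For any $f\in C_b(E)$, a telescoping sum gives
\[
(\bE^{\nu,P}\Lambda_n)(f)-(\bE^{\nu,P}\Lambda_n)(Pf)=\frac{\nu(f)-\nu P^{n+1}(f)}{n+1},
\]
whose absolute value is bounded by $2\|f\|_\infty/(n+1)$. Since $P$ is Feller, $Pf\in C_b(E)$; hence any subsequential narrow limit $\pi$ of $(\bE^{\nu,P}\Lambda_n)$ satisfies $\pi(f)=\pi(Pf)=(\pi P)(f)$ for every $f\in C_b(E)$, so $\pi\in\cI(P)$.

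For part (2), the idea is to show that for every $f\in C_b(E)$ the continuous functional $\Psi_f(\lambda)\eqdef\lambda(f-Pf)$ on $\cM(E)$ vanishes $\mathfrak m$-almost surely. Fixing $f$, I would write the decomposition
\[
\Lambda_n(f-Pf)=\frac{f(X_0)-f(X_{n+1})}{n+1}+\frac{M_n}{n+1},\qquad M_n\eqdef\sum_{k=0}^n\bigl(f(X_{k+1})-Pf(X_k)\bigr),
\]
where $(M_n)$ is an $(\mcF_n)$-martingale under $\bP^{\nu,P}$ with increments bounded by $2\|f\|_\infty$. Orthogonality of increments gives $\bE^{\nu,P} M_n^2\leq 4(n+1)\|f\|_\infty^2$, so by Cauchy--Schwarz $\bE^{\nu,P}|\Lambda_n(f-Pf)|=O(n^{-1/2})\to 0$. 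Since $P$ is Feller, $f-Pf\in C_b(E)$, so $\Psi_f\in C_b(\cM(E))$; hence if $\bP^{\nu,P}\Lambda_{n_k}^{-1}\Rightarrow\mathfrak m$ then
\[
\int|\Psi_f|\,d\mathfrak m=\lim_k\bE^{\nu,P}|\Psi_f(\Lambda_{n_k})|=0,
\]
forcing $\Psi_f=0$ $\mathfrak m$-almost surely.

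To conclude, I would select a countable family $\{f_j\}\subset C_c(\bR^N)$ that is $\|\cdot\|_\infty$-dense (available by separability of $C_c(\bR^N)$); it is determining for probability measures, and yields $\cI(P)=\bigcap_j\{\lambda:\Psi_{f_j}(\lambda)=0\}$, which is a closed subset of $\cM(E)$ by Feller continuity. Taking the countable union of the $\mathfrak m$-null sets $\{\Psi_{f_j}\neq 0\}$ gives $\mathfrak m(\cI(P))=1$, hence $\support(\mathfrak m)\subset\cI(P)$ and $\mathfrak m\in\mcI(P)$. The only genuinely technical point is the passage from a single test function to a countable determining family; the remainder is the interplay between an elementary $L^2$ martingale bound and Feller continuity.
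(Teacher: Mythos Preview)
Your proof is correct and follows the same overall architecture as the paper's: the telescoping identity for part~(1), a martingale decomposition to show $\Lambda_n(f)-\Lambda_n(Pf)\to 0$ for part~(2), and a passage through a countable $\|\cdot\|_\infty$-dense family in $C_c(E)$ to conclude. The one genuine difference is in how the martingale term is controlled. The paper obtains \emph{almost sure} convergence of $\Lambda_n(Pf)-\Lambda_n(f)$ to zero by applying Doob's martingale convergence theorem to $\sum_k k^{-1}(Pf(X_k)-f(X_{k+1}))$ and then invoking Kronecker's lemma; it then transfers this to the limit law via the continuous mapping theorem. You instead use the $L^2$ orthogonality of martingale increments to get $\bE^{\nu,P}|\Lambda_n(f-Pf)|=O(n^{-1/2})$, and pass to the limit directly by integrating the bounded continuous functional $|\Psi_f|$ against the narrowly convergent laws. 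Your route is slightly more elementary (no Kronecker, no a.s.\ statement needed) and yields the conclusion $\int|\Psi_f|\,d\mathfrak m=0$ in one line; the paper's route gives the marginally stronger information that $\Lambda_n(f-Pf)\to 0$ almost surely along the full sequence, which is not needed here but could be of independent interest.
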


\begin{proof}
We omit the upper script $^{\nu,P}$.
For all $f\in C_b(E)$,
$\bE\Lambda_n(Pf) -\bE\Lambda_n(f) \to 0$. As $P$ is Feller, any cluster point  $\pi$  of
$\{\bE\Lambda_n\,,\,n\in\bN\}$ satisfies $\pi(P f)=\pi(f)$.
This proves the first point.

For every $f\in C_b(E)$ and $x\in \Omega$, consider the decomposition:
\begin{align*}
  \Lambda_{n}(x)(P f) - \Lambda_{n}(x)( f)
&= \frac  {1}{n+1}\sum_{k=0}^{n-1}(Pf(x_k)-f(x_{k+1}))+\frac{Pf(x_n)-f(x_0)}{n+1}\,.
\end{align*}
Using that $f$ is bounded, Doob's martingale convergence theorem implies that the sequence
$
\Bigl( \sum_{k=0}^{n-1}k^{-1}(Pf(X_k)-f(X_{k+1})) \Bigr)_n 
$
converges a.s. when $n$ tends to infinity. By Kronecker's lemma, we deduce that 
$\frac  {1}{n+1}\sum_{k=0}^{n-1}(Pf(X_k)-f(X_{k+1}))$ tends a.s. to zero. Hence,
\begin{equation}
\label{eq:mtg}
\Lambda_{n}(P f) - \Lambda_{n}( f) \to 0\ \text{a.s.}
\end{equation}
Now consider a subsequence 
$(\Lambda_{\varphi_n})$ which converges in distribution to some
r.v. $\Lambda$ as $n$ tends to infinity. 
For a fixed $f\in C_b(E)$, the mapping
$\nu \mapsto (\nu(f),\nu(Pf))$ on $\cM(\bR) \to \bR^2$ is continuous. 
From the mapping theorem,  $\Lambda_{\varphi_n}(f)-\Lambda_{\varphi_n}(Pf)$ converges in distribution to
$\Lambda(f)-\Lambda(Pf)$. By~(\ref{eq:mtg}), it follows that  $\Lambda(f)-\Lambda(Pf)=0$
on some event $\cE_f\in \mcF$ of probability one.
Denote by $C_\kappa(E)\subset C_b(E)$ the set of continuous real-valued functions having a compact support,
and let $C_\kappa(E)$ be equipped with the uniform norm $\|\cdot\|_\infty$.
Introduce a dense denumerable subset $S$ of $C_\kappa(E)$. On the 
probability-one event $\cE=\cap_{f\in S}\cE_f$, it holds that for all $f\in S$,
$\Lambda(f)=\Lambda P(f)$.
Now consider $g\in C_\kappa(E)$ and let $\varepsilon>0$. Choose $f\in S$ such that $\|f-g\|_\infty\leq \varepsilon$.
Then, almost everywhere on $\cE$, $|\Lambda(g) -\Lambda P(g)| \leq |\Lambda(f)-\Lambda(g)| + |\Lambda P(f)-\Lambda P(g)|\leq 2\varepsilon$.
Thus, $\Lambda(g) -\Lambda P(g) =0$ for every $g\in C_\kappa(E)$. 
Hence, almost everywhere on $\cE$, one has $\Lambda =\Lambda P$.
\end{proof}

\subsection{Tightness of the Empirical Measures}

\begin{proposition}
\label{prop:tight}
Let $\cP$ be a family of transition kernels on $E$. Let $V:E\to[0,+\infty)$, 
$\psi:E\to[0,+\infty)$ be measurable. Let $\alpha:\cP\to(0,+\infty)$ and
$\beta:\cP\to\bR$.
Assume that $\sup_{P\in\cP}\frac{\beta(P)}{\alpha(P)}<\infty$ and 
$\psi(x) \to \infty$ as $\| x \| \to\infty$. Assume that for every $P\in\cP$,
$$
P V\leq V-\alpha(P)\psi+\beta(P)\,.
$$
Then, the following holds.
\begin{enumerate}[i)]
\item \label{it:itight} The family $\bigcup_{P\in\cP}\cI(P)$ is tight. Moreover, $\sup_{\pi\in\cI(\cP)} \pi(\psi) < +\infty\,.$

\item \label{it:mtight} For every $\nu\in\cM(E)$ s.t. $\nu(V)<\infty$, every $P\in \cP$, 
$\{\bE^{\nu,P}\Lambda_n\,,\,n\in\bN \}$ is tight. Moreover, $\sup_{n\in\bN}\bE^{\nu,P}\Lambda_n(\psi)<\infty\,.$
\end{enumerate} 
\end{proposition}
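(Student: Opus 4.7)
The plan is to extract from the Foster--Lyapunov inequality $PV\leq V-\alpha(P)\psi+\beta(P)$ a uniform $L^1$ control on $\psi$, and then convert it into tightness through the coercivity of $\psi$. Part (ii) requires only an \emph{iterated} drift, while part (i) requires a \emph{truncated} drift, which is the subtler of the two arguments.

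For part (ii), I would first iterate the drift to obtain the pointwise bound
$$
\alpha(P)\sum_{k=0}^{n-1}P^k\psi(x)\;\leq\; V(x)-P^nV(x)+n\beta(P)\;\leq\; V(x)+n\beta(P),
$$
which relies only on $V\geq 0$. Integrating against $\nu$ and using Tonelli,
$$
\bE^{\nu,P}\Lambda_{n-1}(\psi)\;=\;\frac{1}{n}\sum_{k=0}^{n-1}\nu(P^k\psi)\;\leq\;\frac{\nu(V)}{n\,\alpha(P)}+\frac{\beta(P)}{\alpha(P)},
$$
which is uniformly bounded in $n$ since $\nu(V)<\infty$.

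For part (i), the analogous integrated argument fails because $\pi(V)$ may be infinite: integrating the iterated drift against a $P$-invariant $\pi$ only gives $n\alpha(P)\pi(\psi)\leq\pi(V)+n\beta(P)$, which is vacuous when $\pi(V)=+\infty$. The trick I would use is to replace $V$ by its truncation $V_N\eqdef V\wedge N$, which is bounded (and thus $\pi$-integrable), and to exploit the \emph{truncated} drift
$$
V_N(x)-PV_N(x)\;\geq\;\bigl(\alpha(P)\psi(x)-\beta(P)\bigr)\1_{\{V\leq N\}}(x),
$$
which I would verify by cases: on $\{V\leq N\}$ one has $V_N=V$ and $PV_N\leq PV\leq V-\alpha(P)\psi+\beta(P)=V_N-\alpha(P)\psi+\beta(P)$, while on $\{V>N\}$ one has $PV_N\leq N=V_N$. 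Integrating against $\pi$, the left-hand side vanishes by invariance (both $V_N$ and $PV_N$ are bounded by $N$, so the two integrals are finite and equal), yielding $\alpha(P)\,\pi(\psi\1_{\{V\leq N\}})\leq\beta(P)\,\pi(\{V\leq N\})\leq\beta(P)$. Monotone convergence as $N\to\infty$ (using that $V$ is everywhere finite) then produces $\pi(\psi)\leq\beta(P)/\alpha(P)\leq\sup_{Q\in\cP}\beta(Q)/\alpha(Q)<\infty$, uniformly over $\pi\in\cI(\cP)$.

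In both parts, the tightness conclusion is immediate from the coercivity $\psi(x)\to\infty$ as $\|x\|\to\infty$: each sublevel set $\{\psi\leq M\}$ is bounded in $\bR^N$, hence its closure $K_M$ is compact, while Markov's inequality gives $\pi(K_M^c)\leq\pi(\psi)/M$ uniformly in $\pi\in\cI(\cP)$, with the analogous bound for $\bE^{\nu,P}\Lambda_n(K_M^c)$; choosing $M$ large gives the tightness. I expect the only real obstacle to be this truncation step in part (i): the naive integration of the drift against an invariant measure breaks down when $\pi(V)=+\infty$, and one must pass to $V\wedge N$ while checking that the drift survives on $\{V\leq N\}$ with only a controlled loss on its complement. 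Once this truncated identity is in hand, the remainder is bookkeeping.
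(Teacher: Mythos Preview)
Your proof is correct. For part~(ii) it coincides with the paper's argument: iterate the drift, drop $P^nV\geq 0$, integrate against $\nu$.

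For part~(i) you take a genuinely different route. The paper truncates the \emph{target} $\psi$: it first derives the pointwise iterated bound
\[
\frac{1}{n+1}\sum_{k=0}^n P^k(\psi\wedge M)\;\leq\;\Bigl(\frac{V}{\alpha(P)(n+1)}+c\Bigr)\wedge M,
\]
integrates against the invariant $\pi$ (using $\pi P^k=\pi$ on the bounded function $\psi\wedge M$), lets $n\to\infty$ by dominated convergence, and finally lets $M\to\infty$. You instead truncate the \emph{Lyapunov function} $V$, obtain a one-step truncated drift, and cancel $\pi(V_N)=\pi(PV_N)$ directly. Your approach is more economical for part~(i) --- a single limit in $N$ rather than a double limit in $(n,M)$ --- while the paper's has the aesthetic advantage of deriving both parts from the same iterated pointwise inequality. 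Both truncation devices are standard and serve the same purpose: circumventing the possibility that $\pi(V)=+\infty$.

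One small slip: the step $\beta(P)\,\pi(\{V\leq N\})\leq\beta(P)$ presumes $\beta(P)\geq 0$, which is not assumed. This is harmless, since you can pass to the limit $N\to\infty$ directly in $\alpha(P)\,\pi(\psi\1_{\{V\leq N\}})\leq\beta(P)\,\pi(\{V\leq N\})$ (monotone convergence on the left, $\pi(\{V\leq N\})\to 1$ on the right) to obtain $\pi(\psi)\leq\beta(P)/\alpha(P)\leq c$ without that intermediate inequality.
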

\begin{proof}
  For each $P\in\cP$, $P V$ is everywhere finite by assumption. Moreover,
$$
\sum_{k=0}^n P^{k+1}V \leq \sum_{k=0}^n P^{k}V -\alpha(P)\sum_{k=0}^n P^k \psi +(n+1) \beta(P)\,.
$$ 
Using that $V\geq 0$ and $\alpha(P)>0$, 
$$
\frac{1}{n+1}\sum_{k=0}^n P^k \psi \leq \frac{V}{\alpha(P)(n+1)}  +c\,,
$$
where $c\eqdef\sup_{P\in\cP}\beta(P)/\alpha(P)$ is finite. For any $M>0$,
\begin{align}
  \frac{1}{n+1}\sum_{k=0}^n P^k (\psi\wedge M) &\leq
  \left(\frac{1}{n+1}\sum_{k=0}^n P^k \psi\right)\wedge M \nonumber \\ &
\leq \left(\frac{V}{\alpha(P)(n+1)} +c\right)\wedge M\,. \label{eq:wedge}
\end{align}
Set $\pi\in\cI(\cP)$, and consider
$P\in\cP$ such that $\pi=\pi P$. Inequality~(\ref{eq:wedge}) implies that
for every $n$,
$$
\pi (\psi\wedge M) \leq
  \pi\left(\left(\frac{V}{\alpha(P)(n+1)} +c\right)\wedge M\right)\,.
$$
By Lebesgue's dominated convergence theorem, $\pi (\psi\wedge M) \leq c$.
Letting $M\to\infty$ yields $\pi(\psi)\leq c$. The tightness of $\cI(\cP) $ 
follows from the convergence of $\psi(x)$ to $\infty$ as $\|x\|\to\infty$. 
Setting $M=+\infty$ in~(\ref{eq:wedge}), and integrating w.r.t. $\nu$, we obtain
$$
  \bE^{\nu,P}\Lambda_n(\psi)  \leq \frac{\nu(V)}{(n+1)\alpha(P)} +c\,,
$$ 
which proves the second point.
\end{proof}

\begin{proposition}
\label{prop:tight2}
We posit the assumptions of Prop.~\ref{prop:tight}. Then,
\begin{enumerate}
\item The family $\mcI(\cP)\eqdef \bigcup_{P\in\cP}\mcI(P)$ is tight;
\item $\{\bP^{\nu,P}\Lambda_n^{-1}\,,\,n\in\bN\}$ is tight.
\end{enumerate} 
\end{proposition}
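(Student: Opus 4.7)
The plan is to reduce both statements to Lemma~\ref{lem:espTight}, which converts tightness in $\cM(\cM(E))$ into tightness of the expected measure in $\cM(E)$: it will suffice to show that the families $\{e(\mathfrak m):\mathfrak m\in\mcI(\cP)\}$ and $\{e(\bP^{\nu,P}\Lambda_n^{-1}):n\in\bN\}$ are tight in $\cM(E)$. In both cases the control will come from the Lyapunov function $\psi$ supplied by the Pakes--Has'minskii type hypothesis.

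For the second statement, the identity $e(\bP^{\nu,P}\Lambda_n^{-1})=\bE^{\nu,P}\Lambda_n$ has already been recorded in the background section. The second conclusion of Prop.~\ref{prop:tight} ensures that $\{\bE^{\nu,P}\Lambda_n:n\in\bN\}$ is tight in $\cM(E)$, so Lemma~\ref{lem:espTight} immediately yields the tightness of $\{\bP^{\nu,P}\Lambda_n^{-1}:n\in\bN\}$ in $\cM(\cM(E))$.

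For the first statement, I would fix any $\mathfrak m\in\mcI(P)$ with $P\in\cP$. By definition $\support(\mathfrak m)\subset\cI(P)\subset\cI(\cP)$, and the first conclusion of Prop.~\ref{prop:tight} furnishes a finite constant $c\eqdef\sup_{P\in\cP}\beta(P)/\alpha(P)$ such that $\nu(\psi)\leq c$ for every $\nu\in\cI(\cP)$. Since $\psi$ is nonnegative measurable, the map $\nu\mapsto\nu(\psi)$ is Borel on $\cM(E)$ (by truncation $\psi\wedge k\uparrow\psi$ and monotone convergence), so a standard monotone-class extension of the defining identity $e(\mathfrak m)(f)=\int\nu(f)\,\mathfrak m(d\nu)$ together with Tonelli's theorem yields
\[
e(\mathfrak m)(\psi)=\int \nu(\psi)\,\mathfrak m(d\nu)\leq c,
\]
uniformly in $\mathfrak m\in\mcI(\cP)$. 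Markov's inequality then gives $e(\mathfrak m)(\{\psi>M\})\leq c/M$ for every $M>0$. Because $\psi(x)\to\infty$ as $\|x\|\to\infty$, the sublevel set $\{\psi\leq M\}$ is bounded in $E=\RN$, so its closure is compact and has mass at least $1-c/M$ under each $e(\mathfrak m)$. This proves the tightness of $\{e(\mathfrak m):\mathfrak m\in\mcI(\cP)\}$, and Lemma~\ref{lem:espTight} concludes.

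I do not anticipate any genuine obstacle. The only mild care concerns the fact that $\psi$ is merely measurable, so $\{\psi\leq M\}$ need not be closed; in $E=\RN$ this is harmless, since the closure of a bounded set is compact and has the same or larger mass. All the real work has already been carried out in Prop.~\ref{prop:tight} and Lemma~\ref{lem:espTight}, and the present proposition is essentially an unpacking of these results.
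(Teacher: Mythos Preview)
Your proof is correct and follows the same overall strategy as the paper: reduce both statements to Lemma~\ref{lem:espTight} by showing tightness of the corresponding families $\{e(\mathfrak m)\}$ in $\cM(E)$, invoking Prop.~\ref{prop:tight} for the underlying control. The second point is handled identically.

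For the first point there is a small difference worth noting. The paper observes directly that for any $\mathfrak m\in\mcI(P)$ the barycenter $e(\mathfrak m)$ is itself $P$-invariant (since $e(\mathfrak m)(Pf)=\int\nu(Pf)\,\mathfrak m(d\nu)=\int\nu(f)\,\mathfrak m(d\nu)=e(\mathfrak m)(f)$ whenever $\support(\mathfrak m)\subset\cI(P)$), hence $e(\mathfrak m)\in\cI(\cP)$ and tightness of $\{e(\mathfrak m):\mathfrak m\in\mcI(\cP)\}$ is immediate from Prop.~\ref{prop:tight}\ref{it:itight}. You instead integrate the uniform bound $\nu(\psi)\leq c$ over $\mathfrak m$ to get $e(\mathfrak m)(\psi)\leq c$ and then re-derive tightness from coercivity of $\psi$ and Markov's inequality. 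Your route is slightly longer and requires the measurability/Tonelli discussion you flag, but it is perfectly valid; the paper's route avoids this by never needing to evaluate $e(\mathfrak m)$ on an unbounded function.
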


\begin{proof}
For every $\mathfrak{m}\in\mcI(\cP)$, it is easy to see that
$e(\mathfrak{m})\in\cI(\cP)$. Thus, 
$\{e(\mathfrak{m}):\mathfrak{m}\in \mcI(\cP)\}$ is tight by 
Prop.~\ref{prop:tight}. By Lemma~\ref{lem:espTight}, $\mcI(\cP)$ is tight.
The second point follows from the equality 
$\bE^{\nu,P}\Lambda_n=e(\bP^{\nu,P}\Lambda_n^{-1})$ along with Prop.~\ref{prop:tight} and 
Lemma~\ref{lem:espTight}.
\end{proof}

\subsection{Main Proof}

By continuity of $h_\gamma(s,\cdot)$ for every $s\in \Xi$, $\gamma\in (0,\gamma_0)$, the transition kernel
$P_\gamma$ is Feller. 
By Prop.~\ref{prop:tight} and Eq.~\eqref{eq:dVP}, we have
$\sup_n \bE^{\nu,\gamma}\Lambda_n(\varphi\circ f)<\infty$ which, by de la Vall\'ee-Poussin's criterion for uniform integrability, implies
\begin{equation}
\lim_{K\to\infty}\sup_n \bE^{\nu,\gamma}\Lambda_n(\|f\|\1_{\|f\|>K})=0\,.\label{eq:UI-ELambda}
\end{equation}
In particular, the quantity $\bE^{\nu,\gamma}\Lambda_n(f)=\bE^{\nu,\gamma}(F_n)$ is well-defined.

We now prove the statement (\ref{eq:CVSE}).
By contradiction, assume that for some $\delta>0$, there exists a positive sequence $\gamma_j\to 0$, such that for all $j\in\bN$,
$
\limsup_{n\to\infty}\  d\left(\bE^{\nu,\gamma_j}\Lambda_n(f)\,,\cS_f\right)>\delta\,.
$
For every $j$, there exists an increasing sequence of integers 
$(\varphi_n^j,n\in\bN)$ converging to $+\infty$ s.t. 
\begin{equation}
\forall n,\ d\left(\bE^{\nu,\gamma_j}\Lambda_{\varphi_n^j}(f)\,,\cS_f\right)>\delta\,.\label{eq:contradiction1}
\end{equation}
By Prop.~\ref{prop:tight}, the sequence 
$(\bE^{\nu,\gamma_j}\Lambda_{\varphi_n^j},n\in\bN)$ is tight. By Prokhorov's
theorem and Prop.~\ref{prop:feller}, there exists $\pi_j\in \cI(P_{\gamma_j})$
such that, as $n$ tends to infinity, $\bE^{\nu,\gamma_j}\Lambda_{\varphi_n^j}\Rightarrow \pi_j$ along some subsequence.
By the uniform integrability condition~(\ref{eq:UI-ELambda}), $\pi_j(\|f\|)<\infty$ and
$\bE^{\nu,\gamma_j}\Lambda_{\varphi_n^j}(f)\to \pi_j(f)$ as~$n$ tends to infinity, along the latter subsequence.
By Eq. (\ref{eq:contradiction1}), for all $j\in \bN$, 
$d(\pi_j(f),\cS_f)\geq\delta\,.$
By Prop.~\ref{prop:tight}, $\sup_{\pi\in\cI(\cP)} \pi(\psi) < +\infty\,.$ Since $\varphi\circ f\leq M(1+\psi)$,
de la Vall\'ee-Poussin's criterion again implies that
\begin{equation}
  \label{eq:UI-j}
  \sup_{\pi\in \cI(\cP)} \pi(\|f\|\1_{\|f\|>K}) <\infty\,.
\end{equation}
Also by Prop.~\ref{prop:tight}, the sequence $(\pi_j)$ is tight. 
Thus $\pi_j\Rightarrow \pi$ along some subsequence, for some measure $\pi$ which, by Prop.~\ref{prop:cluster}, 
is invariant for $\Phi_{\sH}$.
The uniform integrability condition~\eqref{eq:UI-j} implies that 
$\pi(\|f\|)<\infty$ (hence, the set $\cS_f$ is non-empty)
and  $\pi_j(f)\to \pi(f)$ as $j$ tends to infinity, along the above subsequence. 
This shows that $d(\pi(f),\cS_f)>\delta$, which is absurd.
The statement (\ref{eq:CVSE}) holds true (and in particular, $\cS_f$ must be non-empty).

The proof of the statement~(\ref{eq:support}) follows the same line, by replacing $f$ with the
function $\1_{\overline{\mU_\epsilon}}$. We briefly explain how the proof adapts, without repeating all the arguments.
In this case, $\cS_{\1_{{\mU_\epsilon^c}}}$ is the singleton $\{0\}$, and Equation~(\ref{eq:contradiction1})
reads $\bE^{\nu,\gamma_j}\Lambda_{\varphi_n^j}({\mU_\epsilon^c})>\delta$.
By the Portmanteau theorem,
 $\limsup_n\bE^{\nu,\gamma_j}\Lambda_{\varphi_n^j}({\mU_\epsilon^c})\leq \pi_j({\mU_\epsilon^c})$
where the $\limsup$ is taken along some subsequence.
The contradiction follows from the fact that $\limsup \pi_j({\mU_\epsilon^c})\leq \pi({\overline{\mU_\epsilon^c}})=0$
(where the $\limsup$ is again taken along the relevant subsequence).

We prove the statement~(\ref{eq:CVSf}). Assume by contradiction that for some (other) sequence $\gamma_j\to 0$,
$\limsup_{n\to\infty}\  \bP^{\nu,\gamma_j}\left(d\left(\Lambda_{n}(f)\,,\cS_f\right)\geq \varepsilon\right)>\delta\,.$
For every $j$, there exists a sequence $(\varphi_n^j,n\in \bN)$ s.t. 
\begin{gather}
\forall n,\ \bP^{\nu,\gamma_j}\left(d\left(\Lambda_{\varphi_n^j}(f)\,,\cS_f\right)\geq \varepsilon\right)>\delta\,.
\label{eq:contradition2}
\end{gather}
By Prop.~\ref{prop:tight2}, $(\bP^{\nu,\gamma_j}\Lambda_{\varphi_n^j}^{-1},n\in\bN)$ is tight, one can extract a further subsequence
(which we still denote by $(\varphi_n^j)$ for simplicity) s.t. $\bP^{\nu,\gamma_j}\Lambda_{\varphi_n^j}^{-1}$ converges narrowly 
to a measure ${\mathfrak m}_j$ as $n$ tends to infinity, which, by Prop.~\ref{prop:feller}, satisfies $\mm_j\in \mcI(P_{\gamma_j})$.
Noting that $e(\bP^{\nu,\gamma_j}\Lambda_{\varphi_n^j}^{-1})=\bE^{\nu,\gamma_j}\Lambda_{\varphi_n^j}$ and recalling Eq.~(\ref{eq:UI-ELambda}),
Lemma~\ref{lem:UIf} implies that $\nu'(\|f\|)<\infty$ for all $\nu'$ $\mm_j$-a.e., and
$\bP^{\nu,\gamma_j}\Lambda_{\varphi_n^j}^{-1}\cT_f^{-1}\Rightarrow \mm_j\cT_f^{-1}$, where we recall that
$\cT_f(\nu') \eqdef \nu'(f)$ for all $\nu'$ s.t.  $\nu'(\|f\|)<\infty$. As $(\cS_f)_\varepsilon^c$ is a closed set, 
\begin{align*}
  \mm_j\cT_f^{-1}((\cS_f)_\varepsilon^c) &\geq \limsup_n \bP^{\nu,\gamma_j}\Lambda_{\varphi_n^j}^{-1}\cT_f^{-1}((\cS_f)_\varepsilon^c) \\
&= \limsup_n \bP^{\nu,\gamma_j}\left(d\left(\Lambda_{\varphi_n^j}(f)\,,\cS_f\right)\geq \varepsilon\right)>\delta\,.
\end{align*}
By Prop.~\ref{prop:tight}, $(\mm_j)$ is tight, and one can extract a subsequence (still denoted by $(\mm_j)$)
along which $\mm_j\Rightarrow \mm$ for some measure $\mm$ which, by Prop.~\ref{prop:cluster}, belongs to $\mcI(\Phi_{\sH})$.
For every $j$, $e(\mm_j)\in \cI(P_{\gamma_j})$. By the uniform integrability condition (\ref{eq:UI-j}), one can 
apply Lemma~\ref{lem:UIf} to the sequence $(\mm_j)$. We deduce that $\nu'(\|f\|)<\infty$ for all $\nu'$ $\mm$-a.e.
and $\mm_j\cT_f^{-1}\Rightarrow \mm\cT_f^{-1}$. In particular, 
$$
\mm\cT_f^{-1}((\cS_f)_\varepsilon^c)\geq \limsup_j \mm_j\cT_f^{-1}((\cS_f)_\varepsilon^c) >\delta\,.
$$
Since $\mm\in \mcI(\Phi_{\sH})$, it holds that $\mm\cT_f^{-1}((\cS_f)_\varepsilon^c)=0$, hence a contradiction.

\section{Proofs of Theorems~\ref{cvg-CVSI} and~\ref{cvg-XY}}
\label{sec-prf-asymptotics} 

\subsection{Proof of Theorem~\ref{cvg-CVSI}}

In this proof, we set $L=L_{\aver(\Phi)}$ to simplify the notations.
It is straightforward to show that the identity mapping $f(x)=x$ satisfies the
hypotheses of Th.~\ref{the:CV} with $\varphi = \psi$. Hence, it is sufficient to prove that
$\cS_f$ is a subset of $\overline{\co}(L)$, the closed convex hull of $L$. Choose $q\in \cS_I$ and let
$q=\int xd\pi(x)$ for some $\pi\in\cI(\Phi)$ admitting a first order
moment. There exists a $\Theta$-invariant measure 
$\upsilon\in \cM(C(\bR_+,E))$ s.t. $\support(\upsilon)\subset\Phi(E)$ and $\upsilon p_0^{-1}=\pi$.
We remark that for all $t>0$,
\begin{equation}
q = \upsilon(p_0) = \upsilon(p_t) = \upsilon(p_{t}\circ \aver)\,,\label{eq:pnu}
\end{equation}
where the second identity is due to the shift-invariance of $\upsilon$, and the last one uses
Fubini's theorem.
Again by the shift-invariance of $\upsilon$, the family $\{p_t,t>0\}$ is uniformly integrable w.r.t. $\upsilon$.
By Tonelli's theorem, $\sup_{t>0}\upsilon(\|p_t\circ\aver \|\1_S)\leq  \sup_{t>0}  \upsilon(\|p_t\|\1_S)$
for every $S\in\mcB(C(\bR_+,E))$. Hence, the family $\{p_t\circ \aver,t>0\}$ is $\upsilon$-uniformly integrable as well.
In particular, $\{p_t\circ \aver,t>0\}$ is tight in
$(C(\bR_+,E),\mcB(C(\bR_+,E)),\upsilon)$.  
By Prokhorov's theorem, there exists
a sequence $t_n\to\infty$ and a measurable function 
$g:C(\bR_+,E)\to E$ such that $p_{t_n}\circ \aver$ converges in distribution to $g$
as $n\to\infty$. By uniform integrability, $\upsilon(p_{t_n}\circ \aver)\to \upsilon(g)$.
Equation~(\ref{eq:pnu}) finally implies that 
$$
q=\upsilon(g)\,.
$$
In order to complete the proof, it is sufficient to show that $g(\sx)\in \overline L$ for every $\sx$ $\upsilon$-a.e.,
because $\overline{\co}(L)\subset \co(\overline L)$.
Set $\varepsilon>0$ and $\delta>0$. By the tightness of the r.v. $(p_{t_n}\circ \aver,n\in \bN)$, choose a compact set
$K$ such that $\upsilon(p_{t_n}\circ \aver)^{-1}(K^c)\leq \delta$ for all $n$. As $\overline{L_\varepsilon}^c$ is an open set,
one has
$$
\upsilon g^{-1}(\overline{L_\varepsilon}^c)\leq \lim_n \upsilon (p_{t_n}\circ\aver)^{-1}(\overline{L_\varepsilon}^c)\leq \lim_n \upsilon (p_{t_n}\circ\aver)^{-1}(\overline{L_\varepsilon}^c\cap K) + \delta\,.
$$
Let $\sx\in \Phi(E)$ be fixed. By contradiction, suppose that $\1_{\overline{L_\varepsilon}^c\cap K}(p_{t_n}(\aver(\sx)))$ does 
not converge to zero. Then, $p_{t_n}(\aver(\sx))\in \overline{L_\varepsilon}^c\cap K$ for every $n$ along some subsequence.
As $K$ is compact, one extract a subsequence, still denoted by $t_{n}$, s.t. $p_{t_{n}}(\aver(\sx))$ converges.
The corresponding limit must belong to the closed set $L_\varepsilon^c$, 
but must also belong to $L$ by definition of $\sx$.
This proves that $\1_{L_\varepsilon^c\cap K}(p_{t_n}\circ \aver(\sx)))$ converges to zero for all $x\in \Phi(E)$.
As $\support(\upsilon)\subset \Phi(E)$, 
$\1_{\overline{L_\varepsilon}^c\cap K}(p_{t_n}\circ \aver)$ converges to zero $\upsilon$-a.s.
By the dominated convergence theorem, we obtain that $\upsilon g^{-1}(\overline{L_\varepsilon}^c)\leq \delta$. Letting $\delta\to 0$
we obtain that $\upsilon g^{-1}(\overline{L_\varepsilon}^c)=0$.
Hence, $g(\sx)\in \overline L$ for all $\sx$ $\upsilon$-a.e. The proof is complete.

\subsection{Proof of Theorem~\ref{cvg-XY}}

Recall the definition $\mU \eqdef \bigcup_{\pi \in \cI(\Phi)} \support(\pi)$. 
By Th.~\ref{the:CV}, for all $\varepsilon > 0$, 
\[
\limsup_{n\to\infty} \bE^{\nu,\gamma} \Lambda_n( \mU_\varepsilon^c ) \xrightarrow[\gamma\to 0]{} 0 ,
\]
where $\Lambda_n$ is the random measure given by~(\ref{eq:Lambdan}).
By Theorem~\ref{poincare}, $\support(\pi) \subset \text{BC}_\Phi$ for each 
$\pi \in \cI(\Phi)$. Thus, $\mU_\varepsilon\subset(\text{BC}_\Phi)_\varepsilon$.
Hence, $\limsup_n \bE^{\nu,\gamma}\Lambda_n( ( (\text{BC}_\Phi)_\varepsilon)^c ) 
\to 0$ as $\gamma\to 0$. This completes the proof.

\rev{
\section{Applications}
\label{sec:applis}
In this section, we return to the Examples \ref{ex:optim} and \ref{ex:fluid} of Section~\ref{sec:examples}.

\subsection{Non-Convex Optimization}

Consider the algorithm~\eqref{eq:prox-gradient} to solve
problem~\eqref{eq:pb-nonCVX} where $\ell : \Xi \times E \to \bR$, $r :
E \to \bR$ and $\xi$ is a random variable over a probability space
$(\Omega,\mcF, \bP)$ with values in the measurable space $(\Xi,
{\mcG})$ and with distribution $\mu$. Assume that $\ell(\xi,\,.\,)$ is
continuously differentiable for every $\xi \in \Xi$, that
$\ell(\,.\,,x)$ is $\mu$-integrable for every $x \in E$ and that $r$
is a convex and lower semicontinuous function. We assume that for every compact subset $K$ of $E$,
there exists $\epsilon_K > 0$ s.t.
\begin{equation}
  \label{eq:moment-l}
  \sup_{x \in K} \int \|\nabla \ell(s,x)\|^{1+\epsilon_K} \mu(ds) < \infty\,.
\end{equation}
Define $L(x) \eqdef \bE_\xi(\ell(\xi,x))$. Under Condition (\ref{eq:moment-l}),
it is easy to check that $L$ is differentiable, and that $\nabla L(x) = \int \nabla \ell(s,x) \mu(ds)$.
From now on, we assume moreover that $\nabla L$ is Lipschitz continuous. 
Letting $H(s,x)\eqdef -\nabla \ell(s,x) -\partial r(x)$, it holds that $H(\,.\,,x)$ is proper, $\mu$-integrable and usc~\cite{phe-97},
and that the corresponding selection integral $\sH(x) \eqdef \int H(s,x)\mu(ds)$ is given by
$$
\sH(x) = -\nabla L(x) - \partial r(x)\,.
$$
By \cite[Theorem 3.17, Remark 3.14]{bre-livre73},  for every $a\in E$, 
the DI $\dot \sx(t)\in \sH(x(t))$ admits a unique solution on $[0,+\infty)$
s.t. $\sx(0)=a$. 

Now consider the iterates $x_n$ given
by~\eqref{eq:prox-gradient}. They satisfy~(\ref{eq:iterative-model})
where
$h_\gamma(s,x) \eqdef \gamma^{-1}(\prox_{\gamma r}(x-\gamma \nabla
\ell(s,x)) - x)$.
We verify that the map $h_\gamma$ satisfies Assumption (RM).  Let us
first recall some known facts about proximity operators.  Using
\cite[Prop. 12.29]{bau-com-livre11}, the mapping
$x\mapsto \gamma^{-1}(x-\prox_{\gamma r}(x))$ coincides with the
gradient $\nabla r_\gamma$ of the Moreau enveloppe
$r_\gamma : x\mapsto \min_y r(y) + \|y-x\|^2$.  By
\cite[Prop. 23.2]{bau-com-livre11}, $\nabla r_\gamma(x)\in \partial r(\prox_{\gamma r}(x))$, 
for every $x\in E$.  Therefore,
\begin{align}
  h_{\gamma}(s,x) 
 &= -\nabla r_\gamma(x-\gamma \nabla \ell(s,x)) -\nabla\ell(s,x) \label{eq:tmp-b}  \\
 &\in -\partial r(\prox_{\gamma r}(x-\gamma \nabla \ell(s,x))) -\nabla\ell(s,x) \nonumber \\
 &\in -\partial r(x -\gamma h_\gamma(s,x)) -\nabla\ell(s,x) \,. \label{eq:tmp-a}
\end{align}
In order to show that Assumption (RM)-\ref{hyp:RM-cvg}) is satisfied, we need some estimate on $\|h_\gamma(s,x)\|$.
Using Eq.~(\ref{eq:tmp-b}) and the fact that $\nabla r_\gamma$ is $\gamma^{-1}$-Lipschitz continuous
(see \cite[Prop. 12.29]{bau-com-livre11}), we obtain that
\begin{align}
  \|h_\gamma(s,x)\|&\leq \| \nabla r_\gamma(x)\| + 2\|\nabla \ell(s,x)\| \nonumber \\
&\leq \|\partial^0 r(x)\|+2\|\nabla \ell(s,x)\| \,,\label{eq:bound-hgamma}
\end{align}
where  $\partial^0 r(x)$ the least norm element in $\partial r(x)$ for every $x \in E$,
and where the last inequality is due to \cite[Prop. 23.43]{bau-com-livre11}.
As  $\partial^0 r$ is locally bounded  and $\partial r$ is usc, 
it follows from Eq. (\ref{eq:tmp-a}) that Assumption (RM)-\ref{hyp:RM-cvg}) is satisfied.
The estimate (\ref{eq:bound-hgamma}) also yields Assumption (RM)-\ref{hyp:RM-moments}).
As a conclusion, Assumption (RM) is satisfied.
In particular, the statement of Th.~\ref{th:SA=wAPT} holds.
\medskip

To show that Assumption (PH) is satisfied, we first recall the Proximal
Polyak-Lojasiewicz (PPL) condition introduced
in~\cite{karimi2016linear}. Assume that $L$ is differentiable with a
$C$-Lipschitz continuous gradient. We say that $L$ and $r$ satisfy the
(PPL) condition with constant $\beta > 0$ if for every $x \in
E$, $$\frac12 D_{L,r}(x,C) \geq \beta \left[(L+r)(x) - \min
  (L+r)\right]$$ where $$D_{L,r}(x,C) \eqdef -2 C \min_{y \in E}
\left[\ps{\nabla L(x),y-x} + \frac{C}2 \|y - x\|^2 + r(y) -
  r(x)\right].$$ The (PPL) helps to prove the convergence of the
(deterministic) proximal gradient algorithm applied to the
(deterministic) problem of minimizing the sum $L+r$. We refer to~\cite{karimi2016linear} for practical
cases where the (PPL) condition is satisfied. In our stochastic setting, we introduce the Stochastic PPL condition (SPPL). We say that $\ell$ and $r$ satisfy the (SPPL) condition if there exists $\beta > 0$ such that for every $x \in E$,
$$\frac12 \int D_{\ell(s,\cdot),r}(x,\frac{1}{\gamma}) \mu(ds) \geq \beta \left[(L+r)(x) - \min (L+r)\right].
$$
for all $\gamma \leq \frac{1}{C}$. Note that (SPPL) is satisfied if for every $s \in \Xi$, $\ell(s,\cdot)$ and $r$ satisfy the (PPL) condition with constant $\beta$. In the sequel, we assume that for every $x \in E$, the random variable $\|\ell(x,\xi)\|$ is square integrable and denote by $W(x)$ its variance.

\begin{proposition}
\label{th:PHnoncvx}
Assume that the (SPPL) condition is satisfied, that $\gamma \leq \frac{1}{C}$ and that $$\beta (L(x)+r(x)) - W(x) - \frac{1}4 \|\nabla L(x)\|^2 \longrightarrow_{\|x\| \to +\infty} +\infty.$$ Then (PH) is satisfied.
\end{proposition}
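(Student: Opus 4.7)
The plan is to establish (PH) with the Lyapunov function $V(x) := L(x) + r(x) - \inf(L+r) \geq 0$, and to massage the one-step drift into the form prescribed by the hypothesis. The two controlling quantities in the drift are precisely $\beta(L+r) - W$ (coming from the primary descent) and $\tfrac{1}{4}\|\nabla L\|^2$ (coming from the absorbed quadratic term).

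First, I would fuse two pointwise inequalities. The descent lemma for $L$ (valid since $\nabla L$ is $C$-Lipschitz) gives $L(x_{n+1}) \leq L(x_n) + \ps{\nabla L(x_n), x_{n+1}-x_n} + \tfrac{C}{2}\|x_{n+1}-x_n\|^2$. The variational characterization of the proximal step at the minimizer $y = x_{n+1}$ yields
\[
\ps{\nabla\ell(\xi_{n+1},x_n), x_{n+1}-x_n} + \tfrac{1}{2\gamma}\|x_{n+1}-x_n\|^2 + r(x_{n+1}) - r(x_n) = -\tfrac{\gamma}{2}\,D_{\ell(\xi_{n+1},\cdot),r}(x_n, 1/\gamma).
\]
Setting $\epsilon_{n+1} := \nabla L(x_n) - \nabla\ell(\xi_{n+1}, x_n)$, using $\gamma \leq 1/C$ to ensure $\tfrac{C}{2} - \tfrac{1}{2\gamma} \leq 0$, and eliminating $\ps{\nabla\ell, x_{n+1}-x_n} + r(x_{n+1}) - r(x_n)$ between the two inequalities yields
\[
V(x_{n+1}) \leq V(x_n) + \ps{\epsilon_{n+1}, x_{n+1}-x_n} - \tfrac{1-\gamma C}{2\gamma}\|x_{n+1}-x_n\|^2 - \tfrac{\gamma}{2}\,D_{\ell(\xi_{n+1},\cdot),r}(x_n, 1/\gamma).
\]

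Next I would take conditional expectation given $\mcF_n := \sigma(x_0,\ldots,x_n)$. By (SPPL), the $D$-term dominates $-\gamma\beta V(x_n)$. The cross term is centered against the $\mcF_n$-measurable point $x_n^\star := \prox_{\gamma r}(x_n - \gamma\nabla L(x_n))$: since $\bE[\epsilon_{n+1}\mid\mcF_n]=0$,
\[
\bE[\ps{\epsilon_{n+1}, x_{n+1}-x_n}\mid\mcF_n] = \bE[\ps{\epsilon_{n+1}, x_{n+1}-x_n^\star}\mid\mcF_n],
\]
and the $1$-Lipschitz contractivity of $\prox_{\gamma r}$ yields $\|x_{n+1}-x_n^\star\|\leq \gamma\|\epsilon_{n+1}\|$, hence by Cauchy--Schwarz a bound of $\gamma W(x_n)$ on this term. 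Combining this with the lower estimate $\bE[\|x_{n+1}-x_n\|^2\mid\mcF_n] \geq \tfrac{1}{2}\|x_n^\star - x_n\|^2 - \gamma^2 W(x_n)$ (from $\|a+b\|^2\geq\tfrac{1}{2}\|a\|^2 - \|b\|^2$) and the gradient-mapping relation $\|x_n^\star - x_n\|^2 \asymp \gamma^2\|\nabla L(x_n)\|^2$ (equality when $r\equiv 0$, otherwise up to absorbable corrections), the absorbing quadratic $-\tfrac{1-\gamma C}{2\gamma}\|x_{n+1}-x_n\|^2$ contributes a piece of order $-\tfrac{(1-\gamma C)\gamma}{4}\|\nabla L(x_n)\|^2$. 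Balancing the constants as $\gamma\to 0$, I expect a drift inequality of the shape
\[
P_\gamma V(x) - V(x) \leq -\gamma\,\bigl[\beta (L+r)(x) - W(x) - \tfrac{1}{4}\|\nabla L(x)\|^2 - c\bigr] \qquad (\gamma\leq 1/C)
\]
for some $c\geq 0$ independent of $\gamma$. The hypothesis then guarantees that the bracket tends to $+\infty$ and is bounded below, say by $-M$. Setting $\psi := \beta(L+r) - W - \tfrac{1}{4}\|\nabla L\|^2 - c + M \geq 0$, $\alpha(\gamma) := \gamma$ and $\beta(\gamma) := \gamma M$ verifies (PH), with $\beta(\gamma)/\alpha(\gamma) = M$ uniformly bounded.

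The main obstacle lies in producing exactly the coefficient $\tfrac{1}{4}$ in front of $\|\nabla L\|^2$: it requires carefully tuning the Young-type parameter in the cross-term estimate against the coefficient $\tfrac{1-\gamma C}{2\gamma}$ of the absorbing quadratic, while simultaneously handling the nonsmooth part $r$ in the gradient-mapping identity (for which $\|x_n^\star - x_n\|$ is no longer $\gamma\|\nabla L(x_n)\|$ but the norm of the proximal gradient mapping). A coarser argument using only the contractivity of $\prox_{\gamma r}$ would yield the weaker sufficient condition $\beta(L+r) - W \to +\infty$; obtaining the sharper form stated in the proposition really exploits the firm non-expansiveness of the prox, rather than mere $1$-Lipschitzness.
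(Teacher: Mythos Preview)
Your pointwise inequality and the centering trick at $x_n^\star = \prox_{\gamma r}(x_n - \gamma\nabla L(x_n))$ are correct, and the bound $\bE[\ps{\epsilon_{n+1}, x_{n+1}-x_n}\mid\mcF_n] \leq \gamma W(x_n)$ follows cleanly from the $1$-Lipschitzness of the prox. But at that point you are already done: dropping the nonpositive term $-\tfrac{1-\gamma C}{2\gamma}\|x_{n+1}-x_n\|^2$ gives $P_\gamma V \leq V - \gamma(\beta V - W)$, which is (PH) with $\psi = \beta V - W$ (after adding a constant as you describe). Since the proposition's hypothesis $\beta(L+r)-W-\tfrac14\|\nabla L\|^2\to\infty$ trivially implies $\beta(L+r)-W\to\infty$, what you dismiss as the ``coarser argument'' already proves the stated proposition, and in fact under a weaker hypothesis. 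The task is to establish (PH), not to reproduce a particular $\psi$.

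Your attempt to recover the coefficient $\tfrac14$ is where things go wrong. You have misread the sign of this term: in the target drift inequality the contribution is $+\tfrac{\gamma}{4}\|\nabla L\|^2$ on the right-hand side, i.e.\ a \emph{penalty} that weakens the Lyapunov descent. Your use of a lower bound on $\bE[\|x_{n+1}-x_n\|^2\mid\mcF_n]$ against the negative coefficient $-\tfrac{1-\gamma C}{2\gamma}$ produces a term of the \emph{opposite} sign (it only tightens the bound further, at the cost of an extra $W$ term). Thus your displayed conclusion, with $-\tfrac14\|\nabla L\|^2$ inside the bracket, is not what your derivation yields. The ``main obstacle'' you identify does not exist.

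For comparison, the paper does not center at $x_n^\star$. It writes $h_\gamma(s,x) = -\nabla\ell(s,x) - \nabla r_\gamma(x-\gamma\nabla\ell(s,x))$, splits the resulting cross term by inserting $\pm\nabla r_\gamma(x)$, and bounds $\ps{\nabla\ell(s,x)-\nabla L(x),\,\nabla r_\gamma(x-\gamma\nabla\ell(s,x))-\nabla r_\gamma(x)}$ via firm nonexpansiveness of $\nabla r_\gamma$ together with Young's inequality. This is where the $\tfrac14\|\nabla L\|^2$ loss genuinely appears in the paper's route. Your centering approach is simpler and gives the better bound; the paper's decomposition pays the $\tfrac14\|\nabla L\|^2$ price.
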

\begin{proof}
Using (sub)differential calculus, it is easy to show that for every $n \in \bN$, \begin{equation*}
x + \gamma h_\gamma(s,x) = \arg\min_{y \in E} \left[\ps{\nabla \ell(s,x),y-x} + \frac{1}{2\gamma} \|y - x\|^2 + r(y) - r(x)\right].
\end{equation*}
Since $\nabla L$ is $1/\gamma$-Lipschitz continuous, 
\begin{align}
\label{eq:PHnoncvx}
(L+r)(x + \gamma h_\gamma(s,x)) &= L(x + \gamma h_\gamma(s,x)) + r(x) + r(x + \gamma h_\gamma(s,x)) - r(x) \nonumber\\
&\leq (L+r)(x) + \ps{\nabla L(x), \gamma h_\gamma(s,x)} + \frac{1}{2\gamma}\|\gamma h_\gamma(s,x)\|^2 \nonumber\\
&\phantom{=} + r(x + \gamma h_\gamma(s,x)) - r(x) \nonumber\\
&\leq (L+r)(x) + \ps{\nabla \ell(s,x), \gamma h_\gamma(s,x)} + \frac{1}{2\gamma}\|\gamma h_\gamma(s,x)\|^2 \nonumber\\
&\phantom{=} + \ps{\nabla L(x) - \nabla \ell(s,x), \gamma h_\gamma(s,x)} + r(x + \gamma h_\gamma(s,x)) - r(x) \nonumber\\
&\leq (L+r)(x) - \frac{\gamma}{2} D_{\ell(s,\cdot),r}(x,1/\gamma) \nonumber\\
&\phantom{=} + \gamma\ps{\nabla \ell(s,x) - \nabla L(x), \nabla \ell(s,x) + \nabla r_\gamma(x - \gamma \nabla \ell(s,x))}
\end{align}
Recall that for every $x,y \in E$,
\begin{align*}
\ps{\nabla r_\gamma(x) - \nabla r_\gamma(y),x-y} & =
\ps{\nabla r_\gamma(x) - \nabla r_\gamma(y),\prox_{\gamma r}(x)-\prox_{\gamma r}(y)} \\
&\phantom{=} + \ps{\nabla r_\gamma(x) - \nabla r_\gamma(y),\gamma \nabla r_\gamma(x) - \gamma \nabla r_\gamma(y)}\\
& \geq \gamma \|\nabla r_\gamma(x) - \nabla r_\gamma(y)\|^2, 
\end{align*}
using the monotonicity of $\partial r$. Hence,
\begin{equation*}
\ps{\nabla r_\gamma(x - \gamma \nabla \ell(s,x)) - \nabla r_\gamma(x),\gamma \nabla \ell(s,x)}
 \leq -\gamma \|\nabla r_\gamma(x) - \nabla r_\gamma(x - \gamma \nabla \ell(s,x))\|^2. 
\end{equation*}
Therefore, 
\begin{align*}
&\gamma\ps{\nabla \ell(s,x) - \nabla L(x), \nabla r_\gamma(x - \gamma \nabla \ell(s,x)) - \nabla r_\gamma(x)} \\
\leq & -\gamma \|\nabla r_\gamma(x) - \nabla r_\gamma(x - \gamma \nabla \ell(s,x))\|^2 \\
& + \gamma \|\nabla r_\gamma(x) - \nabla r_\gamma(x - \gamma \nabla \ell(s,x))\|^2 + \frac{\gamma}{4}\|\nabla L(x)\|^2 \\
\leq & \frac{\gamma}{4}\|\nabla L(x)\|^2,
\end{align*}
where we used $\ps{x,y} \leq \|x\|^2 + \frac{1}4 \|y\|^2$.

Plugging into~\eqref{eq:PHnoncvx},

\begin{align*}
(L+r)(x + \gamma h_\gamma(s,x)) &\leq (L+r)(x) - \frac{\gamma}{2} D_{\ell(s,\cdot),r}(x,1/\gamma) \nonumber\\
&\phantom{=} + \gamma\ps{\nabla \ell(s,x) - \nabla L(x), \nabla \ell(s,x)} \nonumber\\
&\phantom{=} + \gamma\ps{\nabla \ell(s,x) - \nabla L(x), \nabla r_\gamma(x - \gamma \nabla \ell(s,x)) - \nabla r_\gamma(x)} \nonumber
\\
&\phantom{=} + \gamma\ps{\nabla \ell(s,x) - \nabla L(x), \nabla r_\gamma(x)} \nonumber\\
&\leq (L+r)(x) - \frac{\gamma}{2} D_{\ell(s,\cdot),r}(x,1/\gamma) \nonumber\\
&\phantom{=} + \gamma\ps{\nabla \ell(s,x) - \nabla L(x), \nabla \ell(s,x)} \nonumber\\
&\phantom{=} + \frac{\gamma}{4}\|\nabla L(x)\|^2 \nonumber
\\
&\phantom{=} + \gamma\ps{\nabla \ell(s,x) - \nabla L(x), \nabla r_\gamma(x)} \nonumber
\end{align*}

Integrating with respect to $\mu$, we obtain 
\begin{align*}
\int (L+r)(x + \gamma h_\gamma(s,x)) \mu(ds)&\leq (L+r)(x) - \gamma \beta \left((L+r)(x) - \min(L+r)\right)\\
&\phantom{=} +\gamma W(x) + \frac{\gamma}{4} \|\nabla L(x)\|^2.
\end{align*}
Finally, the condition (PH) is satisfied with $\alpha(\gamma) = \gamma$, $\beta(\gamma) = 0$, $V = L+r - \min L+r$ and 
$$\psi = \beta V - W - \frac{1}4 \|\nabla L\|^2.$$

\end{proof}
Note that the assumptions of Proposition~\ref{th:PHnoncvx} are satisfied if the (SPPL) condition is satisfied, $L(x) + r(x) \rightarrow_{\|x\| \to +\infty} +\infty$ and the function $x \mapsto \int \|\nabla \ell(s,x)\|^2 \mu(ds)$ is bounded.
\medskip

The condition (FL) is naturally satisfied. Identifying the invariant measures of the DI, we finally obtain a long-run convergence result for the algorithm~\eqref{eq:prox-gradient}. Let $\nu\in\cM(E)$ s.t. $\nu(L+r)<\infty$. Let $\mZ = \{x \in E, \text{ s.t }0 \in \nabla L(x) + \partial r(x)\}$. For all 
$\varepsilon > 0$, 
\begin{equation}
\limsup_{n\to\infty} \frac 1{n+1}\sum_{k=0}^n \bP^{\nu,\gamma}( d(X_k,\mZ)>\varepsilon)\xrightarrow[\gamma\to 0]{}0\,.
\end{equation}

\subsection{Fluid Limit of a System of Parallel Queues}

We now apply the results of this paper to the dynamical system described in
Example~\ref{ex:fluid} above. For a given $\gamma > 0$, the transition 
kernel $P_\gamma$ of the Markov chain $(x_n)$ whose entries are given by 
Eq.~\eqref{eq:queue} is defined on $\gamma\bN^N \times 2^{\gamma \bN^N}$.  
This requires some small adaptations of the statements of the main results
that we keep confined to this paragraph for the paper readability. 
The limit behavior of the interpolated process (see Theorem~\ref{th:SA=wAPT}) 
is described by the following proposition, which has an analogue 
in~\cite{gas-gau-12}: 
\begin{proposition}
For every compact set $K\subset \bR^N$, the family 
$\{ \bP^{a,\gamma}\sX_\gamma^{-1},a\in K\cap\gamma\bN^N ,0<\gamma<\gamma_0 \}$ 
is tight.  Moreover, for every $\varepsilon>0$, 
\[
\sup_{a\in K \cap \gamma\bN^N} 
\,\bP^{a,\gamma}\left(d(\sX_\gamma,\Phi_\sH(K))>\varepsilon\right)
 \xrightarrow[\gamma\to 0]{}0\, , 
\]
where the set-valued map $\sH$ is given by \eqref{Hqueue}.  
\end{proposition}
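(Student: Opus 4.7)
The plan is to deduce this proposition directly from Theorem~\ref{th:SA=wAPT}, with only a cosmetic modification to handle the fact that $P_\gamma$ lives on the lattice $\gamma\bN^N$ rather than on all of $\bR^N$. I would extend $P_\gamma$ to $\bR^N \times \mcB(\bR^N)$ by applying the formula~\eqref{eq:queue} for arbitrary $x\in\bR^N$; since any trajectory started at $a\in K\cap\gamma\bN^N$ remains in $\gamma\bN^N$, this extension is immaterial to the statement. It then suffices to verify Assumption (RM) for the extended family $\{P_\gamma\}$.

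Take $(\Xi,\mcG,\mu)$ to be the law of a single innovation $\xi_1=((A_1^k)_k,(B_1^k)_k)$. The key observation is that Assumption (RM) only forces $h_\gamma$ to \emph{average} to the drift, which gives me the freedom to choose both $h_\gamma$ and $H$ \emph{independent of} $s$: namely $h_\gamma(s,x)\eqdef g_\gamma(x)$ as in~\eqref{gk-queue} and $H(s,x)\eqdef\sH(x)$ as in~\eqref{Hqueue}, extended by $\sH(0)\eqdef\co(\bs u_1,\dots,\bs u_N)$ and by any convex usc extension outside $\bR_+^N$. With this choice, condition (i) and the integrability (iv) are tautological (the selection integral of the constant set-valued map $\sH(x)$ equals $\sH(x)$), the flow bound (v) follows from the uniform boundedness of $\sH$, and the moment condition (vi) follows from $\bE|A_1^k|^2<\infty$ and $|B_1^k|\leq 1$.

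The substantive verifications are (RM)(ii) and (RM)(iii). For (ii), given $(u_n,\gamma_n)\to(u^*,0)$ with $u^*\in\bR_+^N$ and letting $k$ be the smallest index with $u^{*,k}>0$, eventually $u_n^k>0$ and hence $g(u_n)\in\{\bs u_1,\dots,\bs u_k\}\subset\sH(u^*)$, so the Euclidean distance from $g(u_n)$ to $\sH(u^*)$ vanishes for large $n$. For (iii), near $u^*$ small perturbations can only turn zero coordinates into positive ones (not the reverse for indices $j$ with $u^{*,j}>0$), so the smallest active index of any nearby $u$ is at most $k(u^*)$, giving $\sH(u)\subset\sH(u^*)$ and therefore upper semicontinuity. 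The main (mild) difficulty I expect is ensuring upper semicontinuity globally, in particular at the origin and along $\partial\bR_+^N$; the stated conventions handle these boundary cases. With Assumption (RM) in place, Theorem~\ref{th:SA=wAPT} applied to the extended kernel family yields both the tightness of $\{\bP^{a,\gamma}\sX_\gamma^{-1}\}$ and the uniform vanishing of $\bP^{a,\gamma}(d(\sX_\gamma,\Phi_\sH(K))>\varepsilon)$ claimed by the proposition.
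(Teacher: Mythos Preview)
Your approach is essentially the paper's: set $h_\gamma(s,x)=g_\gamma(x)$ and $H(s,x)=\sH(x)$ independently of $s$, verify Assumption (RM), and invoke Theorem~\ref{th:SA=wAPT}. The only cosmetic difference is that you extend $P_\gamma$ to all of $\bR^N$ and apply the theorem as a black box, whereas the paper leaves $P_\gamma$ on $\gamma\bN^N$ and says the proof of Theorem~\ref{th:SA=wAPT} ``follows word for word'' with sequences $(u_n,\gamma_n)$ restricted to the lattice; both handle the state-space mismatch in the same informal spirit, and your explicit checks of (RM)(ii)--(iii) are in fact more detailed than the paper's, which simply declares them ``obviously verified''.
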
 
\begin{proof} 
To prove this proposition, we mainly need to check that Assumption (RM) is
verified.  We recall that the Markov chain $(x_n)$ given by
Eq.~\eqref{eq:queue} admits the representation~\eqref{eq:decomp-markov-drift},
where the function $g_\gamma = (g_\gamma^1,\ldots,g_\gamma^N)$ is given 
by~\eqref{gk-queue}. If we set $h_\gamma(s,x) = g_\gamma(x)$ (the fact that
$g_\gamma$ is defined on $\gamma\bN^N$ instead of $\bR_+^N$ is irrelevant),  
then for each sequence $(u_n, \gamma_n) \to (u^\star, 0)$ with 
$u_n \in \gamma_n \bN^N$ and $x^\star \in \bR_+^N$, it holds that 
$g_{\gamma_n}(u_n) \to \sH(u^\star)$.  
Thus, Assumption (RM)--\ref{hyp:drift}) is verified with $H(s,x) = \sH(x)$. 
Assumptions (RM)--\ref{hyp:RM-cvg}) to (RM)--\ref{hyp:RM-integ}) are obviously 
verified. Since the set-valued map $\sH$ satisfies the 
condition~\eqref{eq:lin-growth}, Assumption (RM)--\ref{hyp:flot-borne}) is 
verified. Finally, the finiteness assumption~\eqref{eq:moment} with 
$\epsilon_K = 2$ follows from the existence of second moments for the $A^k_n$, 
and \eqref{eq:moment-bis} is immediate. The rest of the proof follows word for
word the proof of Theorem~\ref{th:SA=wAPT}. 
\end{proof}

The long run behavior of the iterates is provided by the following proposition:
\begin{proposition}
Let $\nu\in\cM(\bR_+^N)$ be such that $\nu(\|\cdot\|^2)<\infty$. For each 
$\gamma > 0$, define the probability measure $\nu_\gamma$ on $\gamma \bN^N$ as 
\[
\nu_\gamma(\{\gamma i_1, \gamma i_2, \ldots, \gamma i_N\}) = 
\nu(\gamma (i_1 - 1/2, i_1+1/2] \times \cdots \times 
  \gamma (i_N - 1/2, i_N + 1/2]) \, . 
\]
If Condition~\eqref{eq:stability-queue} is satisfied, then for all 
$\varepsilon > 0$,
$$
\limsup_{n\to\infty}\ \frac 1{n+1}\sum_{k=0}^n 
 \bP^{\nu_\gamma,\gamma}\left(d\left(X_k\,, 0 \right) 
   \geq \varepsilon\right) \xrightarrow[\gamma\to 0]{}0\,.
$$
\end{proposition}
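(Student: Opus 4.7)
The plan is to apply Theorem~\ref{cvg-XY} to the Markov chain $(X_n)$ driven by $P_\gamma$ with initial distribution $\nu_\gamma$, after checking its hypotheses and identifying the Birkhoff center $\text{BC}_{\Phi_{\sH}}$ with $\{0\}$. Assumption~(RM) has already been verified in the proof of the preceding proposition; Assumption~(FL) is automatic because $h_\gamma(s,\cdot)=g_\gamma$ is defined on the discrete state space $\gamma\bN^N$; and $\Phi_{\sH}(\bR_+^N)$ is closed by virtue of the linear growth bound~\eqref{eq:lin-growth} satisfied by $\sH$.

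The core of the argument is to exhibit a Lyapunov function meeting~(PH). The natural candidate is the weighted workload $W(x)\eqdef\sum_{k=1}^N x^k/\eta_k$ with $V(x)\eqdef\tfrac12 W(x)^2$. Setting $\rho\eqdef 1-\sum_k\lambda_k/\eta_k>0$, a short computation from~\eqref{eq:queue} using $\bE[A^k_1]=\lambda_k$ and $\bE[B^k_1]=\eta_k$ gives
\[
P_\gamma W(x) - W(x) = \gamma\Bigl(\textstyle\sum_k\lambda_k/\eta_k - \1_{x\neq 0}\Bigr)
= -\gamma\rho\,\1_{x\neq 0} + \gamma(1-\rho)\,\1_{x=0}.
\]
Expanding $V(y)-V(x) = W(x)(W(y)-W(x)) + \tfrac12(W(y)-W(x))^2$, integrating against $P_\gamma(x,dy)$, using that $W(x)\1_{x=0}=0$, and bounding the quadratic term by $C\gamma^2$ via the finiteness of the second moments of the $A^k$, I obtain
\[
P_\gamma V(x) \leq V(x) - \gamma\rho\,W(x) + C\gamma^2.
\]
This is~(PH) with $\alpha(\gamma)=\gamma$, $\beta(\gamma)=C\gamma^2$, and $\psi=\rho W$, which diverges on $\bR_+^N$. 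Since $V(x)\leq c\|x\|^2$, the hypothesis $\nu(\|\cdot\|^2)<\infty$ yields $\nu_\gamma(V)<\infty$ for every $\gamma\in(0,\gamma_0)$.

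It remains to identify the Birkhoff center. The key observation is that $W$ is a \emph{strict} Lyapunov function for the DI: for every $x\in\bR_+^N\setminus\{0\}$ and every $v=\sum_j\theta_j\bs u_j\in\sH(x)$ with $\theta\geq 0$, $\sum_j\theta_j=1$, one computes $\ps{\nabla W,\bs u_j}=\sum_k\lambda_k/\eta_k-1=-\rho$ for each $j$, whence $\ps{\nabla W,v}=-\rho$. Consequently $W(\sx(\cdot))$ decreases at constant rate $\rho$ along any solution $\sx\in\Phi_{\sH}(a)$ until $\sx$ reaches the origin in time $W(a)/\rho$, so $L_{\Phi_{\sH}(a)}\subset\{0\}$ for every $a$. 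Combined with the non-emptiness of $\cI(\Phi_{\sH})$ (which follows from Prop.~\ref{prop:cluster} together with the non-emptiness of $\cI(P_\gamma)$ under (PH) and (FL)), this forces $\text{BC}_{\Phi_{\sH}}=\{0\}$, and Theorem~\ref{cvg-XY} delivers the announced convergence.

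The main obstacle will be to handle the discontinuity of $\sH$ at the origin cleanly: I must argue that $0$ is indeed the only recurrent point, which I intend to do by combining the $\Theta$-invariance of the lift $\upsilon$ associated to any $\pi\in\cI(\Phi_{\sH})$ via Definition~\ref{def-inv} with the strict decrease of $W$ off the origin; this will force $\upsilon$ to be concentrated on trajectories along which $W$ is constant, that is, on the constant path at $0$, so that $\pi=\delta_0$ and $\text{BC}_{\Phi_{\sH}}$ reduces to $\{0\}$.
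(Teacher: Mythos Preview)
Your proof is correct and follows essentially the same route as the paper's: the same quadratic Lyapunov function $V=\tfrac12 W^2$ (the paper uses $W^2$, a harmless factor) to verify (PH), and then an appeal to Theorem~\ref{cvg-XY}. The drift computation and the bound $P_\gamma V\le V-\gamma\rho W+C\gamma^2$ match the paper's line by line.

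The one genuine difference is how the Birkhoff center is pinned down. The paper simply invokes \cite[\S 3.2]{gas-gau-12} for the fact that every trajectory of $\Phi_\sH$ converges to $0$, whereas you give a self-contained argument: since $\ps{\nabla W,\bs u_j}=-\rho$ for each $j$, the workload $W(\sx(t))$ decreases at constant rate $\rho$ off the origin, so no $a\neq 0$ can be recurrent, and nonemptiness of $\cI(\Phi_\sH)$ (hence of $\text{BC}_{\Phi_\sH}$) is supplied by Prop.~\ref{prop:cluster} together with (PH). This is a clean addition; your final paragraph on $\Theta$-invariance is then an alternative justification of the same fact rather than a further obstacle. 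One small caveat: your Lyapunov calculation for the DI is valid for $x\neq 0$, and concluding that the trajectory stays at $0$ once it arrives relies on the well-posedness of the constrained DI on $\bR_+^N$ (which is what \cite{gas-gau-12} establishes and what the paper's definition~\eqref{Hqueue} implicitly uses); this does not affect the inclusion $\text{BC}_{\Phi_\sH}\subset\{0\}$, which is all you need.
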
 

To prove this proposition, we essentially show that the assumptions of 
Theorem~\ref{cvg-XY} are satisfied. In the course of the proof, we shall
establish the existence of the (PH) criterion with a function $\psi$ having 
a linear growth. With some more work, it is possible to obtain a (PH)
criterion with a faster than linear growth for $\psi$, allowing to 
obtain the ergodic convergence as shown in Theorem~\ref{cvg-CVSI}. 
This point will not be detailed here. 

\begin{proof} 
Considering the space $\gamma\bN^N$ as a metric space equipped with the 
discrete topology, any probability transition kernel on 
$\gamma\bN^N \times 2^{\gamma \bN^N}$ is trivially Feller. Thus, 
Proposition~\ref{prop:feller} holds when letting $P = P_\gamma$  
and $\nu \in \cM(\gamma \bN^N)$. Let us check that Assumption (PH) is verified 
if the stability condition~\eqref{eq:stability-queue} is satisfied. Let 
\[
V : \bR_+^N \to \bR_+, \quad 
x = (x^1,\ldots, x^N) \mapsto \Bigl(\sum_{k=1}^N x^k/\eta^k \Bigr)^2 \, . 
\]
Given $1\leq k,\ell \leq N$, define $f(x) = x^k x^\ell$ on $\gamma\bN^2$. 
Using Eq.~\eqref{eq:queue}, the iid property of the process $((A^1_n,\ldots, 
A^N_n, B^1_n,\ldots, B^N_n), n\in\bN)$ and the finiteness of the second 
moments of the $A^k_n$, we obtain 
\begin{align*} 
(P_\gamma f)(x) &\leq x^k x^\ell 
  - \gamma x^k \left( 
  \eta^\ell \1_{\{x^{\ell}>0,\,x^{\ell-1} =\cdots = x^{1} = 0\}} 
  - \lambda^\ell \right)  \\
&\phantom{=} - \gamma x^\ell \left( 
  \eta^k \1_{\{x^{k}>0,\,x^{k-1} =\cdots = x^{1} = 0\}} - \lambda^k \right) 
    + \gamma^2 C \, , 
\end{align*} 
where $C$ is a positive constant. Thus, when $x \in \gamma\bN^N$, 
\begin{align*}
(P_\gamma V)(x) &\leq V(x) - 
  2 \gamma \sum_{k=1}^N x^k / \eta^k \sum_{\ell=1}^N 
  \left( \1_{\{x^{\ell}>0,\,x^{\ell-1} =\cdots = x^{1} = 0\}} - 
  \lambda^\ell / \eta^\ell \right) + \gamma^2 C , 
\end{align*} 
after modifying the constant $C$ if necessary. If $x\neq 0$, then one and only
one of the $\1_{\{x^{\ell}>0,\,x^{\ell-1} =\cdots = x^{1} = 0\}}$ is equal to 
one. Therefore, 
$(P_\gamma V)(x) \leq V(x) - \gamma \psi(x) + \gamma^2 C$, 
where 
\[ 
  \psi(x) = 2 \Bigl( 1 - \sum_{\ell=1}^N \lambda^\ell / \eta^\ell \Bigr) 
  \sum_{k=1}^N x^k / \eta^k \, . 
\]
As a consequence, when Condition~\eqref{eq:stability-queue} is satisfied, the
function $\psi$ is coercive, and one can straightforwardly check that the 
statements of Proposition~\ref{prop:tight}--\ref{it:itight}) and 
Proposition~\ref{prop:tight}--\ref{it:mtight}) hold true under minor 
modifications, namely,  
$\bigcup_{P \in \cP}\cI(P)$ is tight in $\cM(\bR_+^N)$, since  
$\sup_{\pi\in\cI(\cP)} \pi(\psi) < +\infty$, where 
$\cP = \{ P_\gamma \}_{\gamma\in(0,\gamma_0)}$. Moreover, for every 
$\nu\in\cM(\bR_+^N)$ s.t. $\nu(\|\cdot\|^2)<\infty$ and every $P\in \cP$, 
$\{\bE^{\nu_\gamma,P_\gamma}\Lambda_n\,,\,\gamma\in (0,\gamma_0), n\in\bN \}$ 
is tight, since 
$\sup_{\gamma\in(0,\gamma_0),n\in\bN}
 \bE^{\nu_\gamma,P_\gamma}\Lambda_n(\psi)<\infty$. 
We can now follow the proof of Theorem~\ref{cvg-XY}. Doing so, all it remains 
to show is that the Birkhoff center of the flow $\Phi_\sH$ is reduced to 
$\{ 0 \}$. This follows from the fact that when 
Condition~\eqref{eq:stability-queue} is satisfied, all the trajectories of
the flow $\Phi_\sH$ converge to zero, as shown in \cite[\S~3.2]{gas-gau-12}. 
\end{proof}  

}  


\def\cprime{$'$} \def\cdprime{$''$} \def\cprime{$'$}

\end{document}